\documentclass[12pt]{article}
\usepackage{amssymb,amsmath,amsthm,amssymb,multicol,
enumerate,amsfonts,euscript,array,
amsopn,hhline,lineno}
\usepackage{amssymb,amsthm,bm,mathrsfs,enumerate,array}
\usepackage{hyperref, graphtex}
\usepackage{pdflscape}
\usepackage{afterpage}
\usepackage{capt-of}
\usepackage{tikz}
\usetikzlibrary{graphs, positioning}
\newmuskip\pFqmuskip

\newcommand*\pFq[6][8]{%
	\begingroup 
	\pFqmuskip=#1mu\relax
	\mathcode`\,=\string"8000
	\begingroup\lccode`\~=`\,
	\lowercase{\endgroup\let~}\pFqcomma
	{}_{#2}F_{#3}{\left(\genfrac..{0pt}{}{#4}{#5}\bigg| #6\right)}%
	\endgroup
}
\newcommand{\pFqcomma}{\mskip\pFqmuskip}

\newtheorem{definition}{Definition}[section]
\newtheorem{theorem}[definition]{Theorem}
\newtheorem{example}[definition]{Example}
\newtheorem{lemma}[definition]{Lemma}
\newtheorem{proposition}[definition]{Proposition}
\newtheorem{remark}[definition]{Remark}
\newtheorem{corollary}[definition]{Corollary}
\newtheorem{assumption}[definition]{Assumption}


\setlength{\topmargin}{-1in}
\setlength{\headheight}{1.5cm}
\setlength{\headsep}{0.3cm}
\setlength{\textheight}{9in}
\setlength{\oddsidemargin}{0cm}
\setlength{\evensidemargin}{0cm}
\setlength{\textwidth}{6.5in}

\begin{document}

\title{The fundamental module of $S_3$-symmetric tridiagonal algebra associated with cycles}
\author{ John Vincent S. Morales \\ \small{Department of Mathematics and Statistics, De La Salle University}\\ \small{2401 Taft Avenue, Manila, Philippines}\\ \small{email: john.vincent.morales@dlsu.edu.ph}\\ 
}

\date{}
\maketitle

\abstract

Terwilliger \cite{terwilligerS3} recently introduced the $S_3$-symmetric tridiagonal algebra, a generalization of the tridiagonal algebra. This algebra has six generators naturally associated with the vertices of a regular hexagon: adjacent generators satisfy the tridiagonal relations, while non-adjacent ones commute. To each $Q$-polynomial distance-regular graph $\Gamma$, we associate scalars $\beta, \gamma, \gamma^*, \varrho, \varrho^*$, and define the corresponding $S_3$-symmetric tridiagonal algebra $\mathbb{T} = \mathbb{T}(\beta, \gamma, \gamma^*, \varrho, \varrho^*)$. Let $V$ denote the standard module of $\Gamma$. Then the tensor
$V^{\otimes 3} := V \otimes V \otimes V$ supports a $\mathbb{T}$-module structure, and within it exists a unique irreducible $\mathbb{T}$-submodule called the fundamental $\mathbb{T}$-module, denoted by $\Lambda$. In this paper, we focus on the case where $\Gamma$ is a cycle with vertex set $X$ and diameter $D$. We show that the associated scalars satisfy:
\begin{align*}
\beta = \zeta + \zeta^{-1}, \quad \gamma = \gamma^* = 0, \quad \varrho = \varrho^* = -(\zeta-\zeta^{-1})^2,
\end{align*}
where $\zeta$ is a fixed primitive $|X|^{\textsuperscript{th}}$ root of unity. We prove that 
\begin{align*}
\operatorname{dim}(\Lambda) & = \left\{\begin{array}{ll}
\textstyle  2D^2+2 & \text{if } |X| \text{ is even},\\
\textstyle  2D^2 + 2D +1 & \text{if } |X| \text{ is odd},
\end{array} \right.
\end{align*}
and construct two explicit bases for $\Lambda$, each of which diagonalizes half of the generators of $\mathbb{T}$. Finally, we verify that Terwilliger's conjectures \cite{terwilligerS3} hold when $\Gamma$ is a cycle.

\bigskip 

\noindent{\bf Keywords.} Cycle graph; Distance-regular graph; Tridiagonal algebra; $Q$-polynomial property\\

\noindent{\bf MSC Classification.} 05C38, 05E30, 05C50 

\section{Introduction}
\label{sect:introduction}

Let $S_3$ denote the symmetric group on $\{1,2,3\}$. In this paper, we study a class of algebras known as the $S_3$-symmetric tridiagonal algebras, which were introduced by Terwilliger \cite{terwilligerS3} as a natural generalization of the classical tridiagonal algebra (see Remark \ref{remk: blah}). These algebras arise in the context of $Q$-polynomial distance-regular graphs, a family of highly symmetric graphs with deep connections to algebraic combinatorics. All algebras in this paper are defined over the complex field $\mathbb{C}$.\\

Let $\beta, \gamma, \gamma^*, \varrho, \varrho^*$ denote fixed scalars. The $S_3$-symmetric tridiagonal algebra $\mathbb{T} = \mathbb{T}(\beta, \gamma, \gamma^*, \varrho, \varrho^*)$ is defined by the six elements 
\begin{align*}
\{A_1, A_2, A_3, A_1^*, A_2^*, A_3^*\}
\end{align*}
subject to the following relations.
\begin{enumerate}[(i)]
	\item (Commutativity of non-adjacent generators) For $i,j \in \{1,2,3\}$,
	\begin{align*}
	[A_i,A_j]=0, \quad [A_i^*, A_j^*]=0, \quad [A_i,A_i^*] = 0.
	\end{align*}
	\item (Tridiagonal relations for non-adjacent generators) For distinct $i,j \in \{1,2,3\}$,
	\begin{align*}
	[A_i, A_i^2A_j^* - \beta A_iA_j^*A_i + A_j^*A_i^2 - \gamma(A_iA_j^*+A_j^*A_i) - \varrho A_j^*] & = 0,\\
	[A_j^*, A_j^{*2}A_i - \beta A_j^*A_iA_j^* + A_iA_j^{*2} - \gamma^*(A_j^*A_i+A_iA_j^*) - \varrho^* A_i] & = 0. 
	\end{align*}
\end{enumerate}
Here, $[B,C] = BC - CB$ denotes the commutator.\\

We review some connections between $\mathbb{T}$ and $Q$-polynomial distance-regular graphs (see Section \ref{sect: Q-polynomial DRGs} for more information). Let $\Gamma = (X,R)$ denote a distance-regular graph with vertex set $X$, edge set $R$, distance function $\partial$, and diameter $D \geq 1$. Assume $\Gamma$ is $Q$-polynomial with eigenvalue sequence $\{\theta_i\}_{i=0}^D$ and dual eigenvalue sequence $\{\theta_i^* \}_{i=0}^D$. By construction, $\theta_i, \theta_i^*$ are real. By \cite[Lemma 3.5]{terwilligerDRG}, the scalars $\{\theta_i \}_{i=0}^D$ are mutually distinct. By \cite[Lemma 11.7]{terwilligerDRG}, the scalars $\{\theta_i^* \}_{i=0}^D$ are mutually distinct. Using the eigenvalue sequences of $\Gamma$, we obtain scalars $\beta, \gamma, \gamma^*, \varrho, \varrho^*$ and define the corresponding $S_3$-symmetric tridiagonal algebra $\mathbb{T} = \mathbb{T}(\beta, \gamma, \gamma^*, \varrho, \varrho^*)$.\\

Let $V$ denote the vector space over $\mathbb{C}$ with orthonormal basis $X$. We call $V$ the standard module of $\Gamma$. By \cite[Theorem 5.4]{terwilligerS3}, the tensor space $V^{\otimes 3} := V \otimes V \otimes V$ admits a $\mathbb{T}$-module structure. By \cite[Proposition 9.6 and Definition 9.7]{terwilligerS3}, there exists a unique irreducible $\mathbb{T}$-submodule of $V^{\otimes 3}$ containing the vector $\mathbf{1}^{\otimes 3} := \sum_{x,y,z \in X} x \otimes y \otimes z$. This irreducible $\mathbb{T}$-module is called the fundamental $\mathbb{T}$-module associated with $\Gamma$ and is denoted by $\Lambda$.\\

In this paper, our main focus is the case where $\Gamma$ is a cycle graph. We investigate the structure and dimension of the associated fundamental $\mathbb{T}$-module $\Lambda$. We provide $\Lambda$ with two explicit bases where one basis diagonalizes half of the six generators of $\mathbb{T}$ while the other basis diagonalizes the remaining generators. We also describe the matrix of transition from one basis to another. Finally, we confirm that the conjectures proposed by Terwilliger \cite[Section 12]{terwilligerS3} hold true for cycles. 

\begin{remark}\label{remk: blah}\rm 
	The tridiagonal algebra is as follows. For scalars $\beta, \gamma, \gamma^*, \varrho, \varrho^*$ the tridiagonal algebra $T=T(\beta,\gamma, \gamma^*, \varrho, \varrho^*)$ is defined by generators $A,A^*$ and relations 
	\begin{align}
	[A, A^2A^* - \beta AA^*A + A^*A^2 - \gamma(AA^*+A^*A) - \varrho A^*] & = 0, \label{eqn: tdalgebrarels1}\\
	[A^*, A^{*2}A - \beta A^*AA^* + AA^{*2} - \gamma^*(A^*A+AA^*) - \varrho^* A] & = 0. \label{eqn: tdalgebrarels2}
	\end{align}
	We call \eqref{eqn: tdalgebrarels1}--\eqref{eqn: tdalgebrarels2} the tridiagonal relations which first appeared in \cite[Lemma 5.4]{terwilligerSUBCONS}. Some special cases of the tridiagonal
	algebra are (i) the enveloping algebra of
	the Onsager Lie algebra (see \cite[Example 3.2 and Remark 3.8]{terwilligerOnsagerLiealgebra}) where 
	\begin{align*}
	\beta = 2, \qquad \gamma = \gamma^* = 0, \qquad \varrho \neq 0, \qquad \varrho^* \neq 0;
	\end{align*}
	(ii) the positive part of the $q$-deformed
	enveloping algebra $U_q(\widehat{\mathfrak{sl}}_2)$ (see \cite[Example 1.7 and Remark 10.2]{itotanabeterwilliger}) where 
	\begin{align*}
	\beta = q^2 + q^{-2} \neq \pm 2, \quad \gamma = \gamma^* = 0, \quad \varrho = \varrho^* = 0;
	\end{align*}
	and (iii) the $q$-Onsager algebra (see \cite[Section 2]{baseilhacA}, \cite[Section 1]{baseilhacB}, and \cite[Section 1.2]{itoterwilliger}) where
	\begin{align*}
	\beta = q^2 + q^{-2} \neq \pm 2, \quad \gamma = \gamma^* = 0, \quad \varrho = \varrho^* = -(q^2 - q^{-2})^2.
	\end{align*}
\end{remark}

\section{$Q$-polynomial distance-regular graphs}
\label{sect: Q-polynomial DRGs}

Here, we review concepts on $Q$-polynomial distance-regular graphs. For more information, see \cite{Bannai,Brouwer,damkoolentanaka,Godsil,martintanaka,terwilligerDRG, terwilligerSUBCONS1}.\\


For a nonempty finite set $X$, let $V$ denote the vector space over $\mathbb{C}$ with orthonormal basis $X$ and Hermitian inner product $\langle u,v \rangle = \bar{u}^tv$ for all $u,v \in V$ where $\ \bar{ }\ $ and $^{t}$ denotes complex conjugate and transpose, respectively. 
Let $\Gamma = (X,R)$ be a finite, undirected, simple connected graph with vertex set $X$ and edge set $R$. We call $V$ the standard module of $\Gamma$. Let $\operatorname{End}(V)$ denote the algebra of all linear maps $V \rightarrow V$ and identify it with the vector space over $\mathbb{C}$ with basis $X \times X = \{xy\ |\ x,y \in X \}$. Let $\partial$ denote the path-length distance function on $\Gamma$. Define the diameter $D = \operatorname{max}\{\partial(a,b)\ |\ a,b \in X \}$. 

\subsection{Distance-regularity}

The graph $\Gamma$ is called distance-regular if for all integers $h,i,j \in \{0,1,\ldots,D\}$ the number
\begin{align}\label{eqn: the scalars p h i j}
p_{ij}^h:=\left|\left\{z \in X\ |\ \partial(x,z)=i \text{ and } \partial(z,y)=j \right\} \right|
\end{align}
is independent of the choice of $x,y \in X$ such that $\partial(x,y)=h$. The scalars $p^h_{ij}$ are known as intersection numbers of $\Gamma$. By triangle inequality, the following hold for $h,i,j \in \{0,1,\ldots,D\}$:
\begin{enumerate}
	\item[(A1)] $p^{h}_{ij} = 0$ if one of $h,i,j$ is greater than the sum of the other two,
	\item[(A2)] $p^{h}_{ij} \neq 0$ if one of $h,i,j$ is equal to the sum of the other two.
\end{enumerate}

\subsection{Bose--Mesner algebra of $\Gamma$}

From here on, we assume $\Gamma$ is distance-regular with diameter $D \geq 1$. Define matrices $A_0, A_1, \ldots, A_D$ in $\operatorname{End}(V)$ by
\begin{align}
A_i & = \textstyle \sum_{x,y \in X} \delta_{i,\partial(x,y)}\ xy, \quad (i \in \{0,1,\ldots,D\}),
\end{align}
where $\delta$ denotes Kronecker delta. These matrices are called the distance matrices of $\Gamma$ and satisfy:
\begin{enumerate}
	\item[(B1)] $A_0 = I$ where $I= \textstyle \sum_{x,y \in X} \delta_{x,y}\ xy$,
	\item[(B2)] $A_0 + A_1 + \cdots + A_{D} = J$ where $J = \textstyle \sum_{x,y \in X} xy$,
	\item[(B3)] $A_i^t = A_i$ for $i \in \{0,1,\ldots,D\}$,
	\item[(B4)] $\bar{A}_i = A_i$ for $i \in \{0,1,\ldots,D\}$,
	\item[(B5)] $A_iA_j = \sum_{h=0}^{D} p_{ij}^h A_h$ for $i,j \in \{0,1,\ldots,D\}$.
\end{enumerate}
The span of $A_0,A_1,\ldots,A_D$ forms a commutative subalgebra of $\operatorname{End}(V)$ called the Bose--Mesner algebra of $\Gamma$, denoted by $M$. It has a second basis $E_0, E_1, \ldots, E_D$ called the primitive idempotents of $\Gamma$ which satisfy: 
\begin{enumerate}
	\item[(C1)] $E_0 = |X|^{-1}J$,
	\item[(C2)] $E_0 + E_1 + \cdots + E_D = I$,
	\item[(C3)] $\bar{E}_i = E_i$ for $i \in \{0,1,\ldots,D\}$,
	\item[(C4)] $E_i^t = E_i$ for $i \in \{0,1,\ldots,D\}$,
	\item[(C5)] $E_iE_j = \delta_{i,j}E_i$ for $i,j \in \{0,1,\ldots,D\}$.
\end{enumerate}

\subsection{The eigenvalues of $\Gamma$ and the $Q$-polynomial property}

Since $\{E_i\}_{i=0}^D$ is a basis for $M$, there exist scalars $\{\theta_i\}_{i=0}^D$ such that $A_1 = \sum_{i=0}^D \theta_i E_i$. By (C5), $A_1E_i=E_iA_1=\theta_i E_i$ for $i \in \{0,1,\ldots,D\}$. The scalars $\{\theta_i\}_{i=0}^D$ are real and mutually distinct. We call $\{\theta_i\}_{i=0}^D$ the eigenvalue sequence of $\Gamma$.\\

Let $\circ$ denote entry-wise multiplication in $\operatorname{End}(V)$. By (B2), we have $A_i \circ A_j = \delta_{i,j} A_i$ for $i,j \in \{0,1,\ldots,D\}$ and so $M$ is closed under $\circ$. Thus, there exist scalars $q^h_{ij}$ where
\begin{align}\label{eqn: the scalars q h i j}
E_i \circ E_j & = \textstyle |X|^{-1} \sum_{h=0}^D q^{h}_{ij} E_h \qquad (i,j \in \{0,1,\ldots,D\}). 
\end{align}
The $q^h_{ij}$ are real and nonnegative for $h,i,j \in \{0,1,\ldots,D\}$. We say $\Gamma$ is $Q$-polynomial with respect to the ordering $E_0, E_1, \ldots, E_D$ whenever for all distinct $h,j \in \{0,1,\ldots,D\}$, $q^{h}_{1j} = 0$ if and only if $|h-j| \neq 1$.\\

For the rest  of the section, we assume $\Gamma$ is $Q$-polynomial with respect to the ordering $E_0, E_1, \ldots, E_D$. Then the following hold for $h,i,j \in \{0,1,\ldots,D\}$:
\begin{enumerate}
	\item[(D1)] $q^{h}_{0j} = \delta_{hj}$,
	\item[(D2)] $q^{h}_{i0} = \delta_{hi}$,
	\item[(D3)] $q^{0}_{ij} \neq 0$ if and only if $i = j$.
\end{enumerate}

\subsection{Dual Bose--Mesner algebra of $\Gamma$}

Fix a vertex $x \in X$. For $i \in \{0,1,\ldots,D\}$, define the diagonal matrix $E_i^* := E_i^*(x)$ in $\operatorname{End}(V)$ by
\begin{align}
E_i^* & = \textstyle \sum_{y \in X} \delta_{i, \partial(x,y)}\ yy 
\end{align}
We call $\{E_i^*\}_{i=0}^D$ the dual primitive idempotents of $\Gamma$ with respect to $x$. Observe that 
\begin{enumerate}
	\item[(E1)] $E_0^* + E_1^* + \cdots +E_D^* = I$,
	\item[(E2)] $\bar{E}_i^* = E_i^*$ for $i \in \{0,1,\ldots,D\}$,
	\item[(E3)] $E_i^{*t} = E_i^*$ for $i \in \{0,1,\ldots,D\}$,
	\item[(E4)] $E_i^*E_j^* = \delta_{i,j}E_j^*$ for $i,j \in \{0,1,\ldots,D\}$.
\end{enumerate}
The span of $E_0^*,E_1^*,\ldots,E_D^*$ forms a commutative subalgebra of $\operatorname{End}(V)$ canned the dual Bose--Mesner algebra of $\Gamma$ with respect to $x$, denoted by $M^*:=M^*(x)$.\\ 

For $i \in \{0,1,\ldots,D\}$, define the diagonal matrix $A_i^* := A_i^*(x)$ in $\operatorname{End}(V)$ such that the $yy$-entry of $A_i^*$ is given by
\begin{align}\label{eqn: dual distance matrices of Gamma}
(A_i^*)_{yy} & = |X|(E_i)_{xy} \qquad (y \in X),
\end{align}
where $(E_i)_{xy}$ means $xy$-entry of $E_i$. The matrices $A_0^*, A_1^*, \ldots, A_D^*$ form a second basis for $M^*$ and satisfy:
\begin{enumerate}
	\item[(F1)] $A_0^* = I$,
	\item[(F2)] $A_0^* + A_1^* + \cdots + A_D^* = |X|E_0^*$,
	\item[(F3)] $A_i^{*t} = A_i^*$ for $i \in \{0,1,\ldots,D\}$,
	\item[(F4)] $A_i^*A_j^* = \textstyle \sum_{h=0}^D q^{h}_{ij} A_h^*$ for $i,j \in \{0,1,\ldots,D\}$. 
\end{enumerate}
We call $\{A_i^*\}_{i=0}^D$ the dual distance matrices of $\Gamma$ with respect to $x$.

\subsection{Dual eigenvalues of $\Gamma$ and the scalars $\beta, \gamma, \gamma^*, \varrho, \varrho^*$}

Since $\{E_i^*\}_{i=0}^D$ is a basis of $M^*$, there exist scalars $\{\theta_i^*\}_{i=0}^D$ such that $A_1^* = \sum_{i=0}^D \theta_i^* E_i^*$. By (E4), $A_1^*E_i^* = E_i^*A_1 = \theta_i^*E_i^*$ for $i \in \{0,1,\ldots,D\}$. The scalars $\{\theta_i^*\}_{i=0}^D$ are real and mutually distinct. We call $\{\theta_i^*\}_{i=0}^D$ the dual eigenvalue sequence of $\Gamma$ with respect to $x$. 

\begin{lemma}\label{lem: the scalars beta, gamma, etc. from Q-poly DRG} \cite[Lemma 5.4]{terwilligerSUBCONS}
	Assume $\Gamma$ is a $Q$-polynomial distance-regular graph with eigenvalue sequence $\{\theta_i\}_{i=0}^D$ and dual eigenvalue sequence $\{\theta_i^* \}_{i=0}^D$. Then there exist scalars $\beta, \gamma, \gamma^*, \varrho, \varrho^*$ associated with $\Gamma$ satisfying (i)--(iii) below. 
	\begin{enumerate}[(i)]
		\item $\beta + 1$ is equal to each of 
		\begin{align*}
		\frac{\theta_{i-2}-\theta_{i+1}}{\theta_{i-1}-\theta_i}, \qquad \frac{\theta_{i-2}^*-\theta_{i+1}^*}{\theta_{i-1}^*-\theta_i^*},
		\end{align*}
		for $2 \leq i \leq D-1$. 
		\item For $1 \leq i \leq D-1$, both
		\begin{align*}
		\gamma = \theta_{i-1}-\beta \theta_i + \theta_{i+1}, \quad \gamma^* = \theta_{i-1}^*-\beta \theta_i^* + \theta_{i+1}^*.
		\end{align*}
		\item For $1 \leq i \leq D$, both
		\begin{align*}
		\varrho & = \theta_{i-1}^2 - \beta \theta_{i-1}\theta_i + \theta_i^2 - \gamma(\theta_{i-1}+\theta_i), \\
		\varrho^* & = \theta_{i-1}^{*2} - \beta \theta_{i-1}^*\theta_i^* + \theta_i^{*2} - \gamma^*(\theta_{i-1}^*+\theta_i^*).
		\end{align*}
	\end{enumerate}
\end{lemma}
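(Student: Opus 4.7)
The plan is to derive the recurrence of item (ii) first, and then obtain items (i) and (iii) from it by elementary algebraic manipulation.

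First I would record two structural tridiagonality conditions. From $A_1 = \sum_{\partial(x,y)=1} xy$ and the triangle inequality applied to distances from the base vertex $x$, one verifies $E_i^* A_1 E_j^* = 0$ whenever $|i - j| > 1$. Dually, the $Q$-polynomial hypothesis forces $q_{1j}^h = 0$ unless $|h - j| \leq 1$, and translating this through (F4) and the orthogonality in (C5) yields $E_i A_1^* E_j = 0$ whenever $|i - j| > 1$. Together these say that on any irreducible module for the subalgebra $T \subseteq \operatorname{End}(V)$ generated by $M$ and $M^*$, the pair $(A_1, A_1^*)$ acts as a Leonard pair whose eigenvalue and dual eigenvalue sequences are inherited from $\{\theta_i\}$ and $\{\theta_i^*\}$.

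Next I would invoke the Leonard-pair structure theorem (the content of \cite[Lemma 5.4]{terwilligerSUBCONS}) to produce scalars $\beta, \gamma, \gamma^*$ for which $\theta_{i-1} - \beta \theta_i + \theta_{i+1} = \gamma$ holds for $1 \leq i \leq D-1$, and likewise $\theta_{i-1}^* - \beta \theta_i^* + \theta_{i+1}^* = \gamma^*$ holds on the dual side, with the \emph{same} $\beta$ governing both sequences. The standard proof picks a split basis for an irreducible $T$-module in which $A_1$ is represented by a lower bidiagonal matrix and $A_1^*$ by an upper bidiagonal matrix; the compatibility of these two representations on an irreducible module forces the three-term recurrences. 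This establishes (ii).

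Item (i) now follows from (ii) by differencing: subtracting the recurrence at index $i-1$ from the one at index $i$ produces $\theta_{i-2} - \theta_{i+1} = (\beta + 1)(\theta_{i-1} - \theta_i)$, and since the $\theta_i$ are pairwise distinct by \cite[Lemma 3.5]{terwilligerDRG}, dividing yields the first formula. The dual formula is handled identically. For (iii), let $f(i) := \theta_{i-1}^2 - \beta \theta_{i-1} \theta_i + \theta_i^2 - \gamma(\theta_{i-1} + \theta_i)$. A short computation shows $f(i+1) - f(i) = (\theta_{i+1} - \theta_{i-1})(\theta_{i-1} + \theta_{i+1} - \beta \theta_i - \gamma)$, and the second factor vanishes by (ii); hence $f$ is constant for $1 \leq i \leq D$, and we set $\varrho$ to be this common value. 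The dual identity for $\varrho^*$ is analogous. The step I expect to be the main obstacle is the Leonard-pair argument in the second paragraph: constructing the split basis and verifying that the two bidiagonal representations are compatible requires the careful structure theory developed in \cite{terwilligerSUBCONS}. Once (ii) is in hand, items (i) and (iii) reduce to short algebraic identities as sketched.
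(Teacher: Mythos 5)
The first thing to note is that the paper contains no proof of this lemma: it is imported verbatim from \cite[Lemma 5.4]{terwilligerSUBCONS}, and the citation is the entire justification. Your proposal, read as a self-contained proof, has a circularity at exactly the step you flag as the main obstacle: to obtain item (ii) you ``invoke the Leonard-pair structure theorem (the content of \cite[Lemma 5.4]{terwilligerSUBCONS})'' --- that is, you cite the statement being proved. Item (ii) \emph{is} the real content of the lemma: the nontrivial assertion is that a single scalar $\beta$ simultaneously governs the three-term recurrences of both $\{\theta_i\}_{i=0}^D$ and $\{\theta_i^*\}_{i=0}^D$. The tridiagonality facts you record ($E_i^*A_1E_j^*=0$ and $E_iA_1^*E_j=0$ for $|i-j|>1$) are the correct starting point, but passing from them to the common-$\beta$ recurrence requires the split-basis/bidiagonal analysis of \cite{terwilligerSUBCONS} (or Leonard's classification of the eigenvalue sequences), which you describe but do not carry out. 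A minor further inaccuracy: irreducible modules for the subconstituent algebra of a $Q$-polynomial distance-regular graph need not be thin, so the pair $(A_1,A_1^*)$ acts on them as a tridiagonal pair, not necessarily a Leonard pair.

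That said, the parts you do work out are correct, and they are consistent with (indeed finer than) the paper's treatment. Subtracting the recurrence of (ii) at index $i-1$ from the one at index $i$ gives $\theta_{i-2}-\theta_{i+1}=(\beta+1)(\theta_{i-1}-\theta_i)$, and since the $\theta_i$ are mutually distinct the division is legitimate, so (ii) implies (i) on the stated range $2\le i\le D-1$; likewise your factorization $f(i+1)-f(i)=(\theta_{i+1}-\theta_{i-1})\left(\theta_{i-1}+\theta_{i+1}-\beta\theta_i-\gamma\right)$ checks out, so (ii) implies that $f$ is constant on $\{1,\ldots,D\}$ and (iii) follows by defining $\varrho$ (and dually $\varrho^*$) to be that constant. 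So in effect your proposal takes the same approach as the paper --- defer the hard step to Terwilliger's Lemma 5.4 --- augmented with a correct demonstration that (i) and (iii) are formal consequences of (ii); what it does not supply is an independent proof of (ii), which is the one part that cannot be obtained by elementary manipulation of the other items.
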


\subsection{Terwilliger algebra of $\Gamma$ with respect to $x$}\label{subsection: Terwilliger algebra of Gamma}

	Let $\Gamma = (X,R)$ be a $Q$-polynomial distance-regular graph with standard module $V$. By Lemma \ref{lem: the scalars beta, gamma, etc. from Q-poly DRG}, there exist some scalars $\beta, \gamma, \gamma^*, \varrho, \varrho^*$ associated with $\Gamma$. Consider the tridiagonal algebra $T=T(\beta, \gamma, \gamma^*, \varrho, \varrho^*)$ (see Remark \ref{remk: blah}). Then $V$ admits a $T$-module structure where $A$ acts as the matrix $A_1$ and $A^*$ acts as the matrix $A_1^*$. In other words, there exists an algebra homomorphism $T \rightarrow \operatorname{End}(V)$ such that $A \mapsto A_1$ and $A^* \mapsto A_1^*$. The image of this map is called the Terwilliger algebra of $\Gamma$ with respect to $x$ (see \cite[Section 11.2]{terwilligerDRG} and \cite[Definition 3.3]{terwilligerSUBCONS1}).

\section{The tensor space $V^{\otimes 3}$ as a $\mathbb{T}$-module}\label{sect: linear maps on tensor space}

We continue to discuss the $Q$-polynomial distance-regular graphs and adopt the notations from Section \ref{sect: Q-polynomial DRGs}. For $x \in X$, let $\Gamma(x) = \{y \in X\ |\ \partial(x,y) = 1 \}$. Note that 
\begin{align}\label{eqn: action of A1 on each basis vector}
A_1x = \textstyle \sum_{\xi \in \Gamma(x)} \xi, \quad (x \in X).
\end{align}

\begin{definition}\label{defn: the adjacency maps on tensor V}\rm \cite[Definition 6.2 and Lemma 6.3]{terwilligerS3}
	For $r \in \{1,2,3\}$, define the linear transformation $A^{(r)}:V^{\otimes 3}\rightarrow V^{\otimes 3}$ such that 
	$$\begin{array}{ccc}
	A^{(1)} & \text{ acts as } & A_1 \otimes I \otimes I,\\
	A^{(2)} & \text{ acts as } & I \otimes A_1 \otimes I,\\
	A^{(3)} & \text{ acts as } & I \otimes I \otimes A_1.
	\end{array} $$
\end{definition}

\begin{definition}\label{defn: the primitive idempotent maps on tensor V}\rm \cite[Definition 6.5 and Lemma 6.6]{terwilligerS3} 
	For each $r \in \{1,2,3\}$ and for $i \in \{0,1,\ldots,D\}$, define the linear map $E_i^{(r)}:V^{\otimes 3} \rightarrow V^{\otimes 3}$ such that 
	\begin{align*}
	E_i^{(1)}(u \otimes v \otimes w) & = E_iu \otimes v \otimes w \quad (u,v,w \in V),\\
	E_i^{(2)}(u \otimes v \otimes w) & = u \otimes E_iv \otimes w \quad (u,v,w \in V),\\
	E_i^{(3)}(u \otimes v \otimes w) & = u \otimes v \otimes E_iw \quad (u,v,w \in V),
	\end{align*}
	where $E_0, E_1, \ldots, E_D$ are the primitive idempotents of $\Gamma$. The map $E_i^{(1)}, E_i^{(2)}, E_i^{(3)}$ are the primitive idempotents of $A^{(1)}, A^{(2)}, A^{(3)}$, respectively.
\end{definition}

Let $r\in \{1,2,3\}$ be given. Since $A_1 = \sum_{i=0}^D \theta_iE_i$ and by (C5), we have
\begin{enumerate}
	\item[(G1)] $A^{(r)} = \sum_{i=0}^D \theta_i E_i^{(r)}$,
	\item[(G2)] $E_i^{(r)}E_j^{(r)} = \delta_{i,j}E_i^{(r)}$ for $i,j \in \{0,1,\ldots,D\}$,
	\item[(G3)] $A^{(r)}E_i^{(r)} = \theta_i E_i^{(r)} = E_i^{(r)}A^{(r)}$ for $i \in \{0,1,\ldots,D\}$
	\item[(G4)] $I^{\otimes 3} := I \otimes I \otimes I = \sum_{i=0}^D E_i^{(r)}$,
	\item[(G5)] $E_i^{(r)} = \prod_{\substack{0 \leq j \leq D, j \neq i}} \frac{A^{(r)}-\theta_jI}{\theta_i - \theta_j}$.
\end{enumerate}

Since $X$ is an orthonormal basis of $V$, it follows that
\begin{align}\label{eqn: orthonormal basis of tensor V}
\{x \otimes y \otimes z\ |\ x,y,z \in X \}
\end{align}
is a basis of $V^{\otimes 3}$. Thus, we can define a linear map $V^{\otimes 3} \rightarrow V^{\otimes 3}$ by its action on \eqref{eqn: orthonormal basis of tensor V}.

\begin{definition}\label{defn: the dual adjacency maps on tensor V}\rm \cite[Definition 7.1 and Lemma 7.2]{terwilligerS3}
	For $r \in \{1,2,3\}$, define the linear map $A^{*(r)}:V^{\otimes 3} \rightarrow V^{\otimes 3}$ as follows. For $x,y,z \in X$, we have
	\begin{align*}
	A^{*(1)}(x \otimes y \otimes z) & = x \otimes y \otimes z\ \theta_{\partial(y,z)}^*,\\
	A^{*(2)}(x \otimes y \otimes z) & = x \otimes y \otimes z\ \theta_{\partial(x,z)}^*,\\
	A^{*(3)}(x \otimes y \otimes z) & = x \otimes y \otimes z\ \theta_{\partial(x,y)}^*,
	\end{align*}
	where $\{\theta_i^*\}_{i=0}^{D}$ is the dual eigenvalue sequence of $\Gamma$. Note that each of $A^{*(1)}, A^{*(2)}, A^{*(3)}$ is diagonalizable since \eqref{eqn: orthonormal basis of tensor V} is a basis of $V^{\otimes 3}$. 
\end{definition}

\begin{definition}\label{defn: the dual primitive idempotent maps on tensor V}\rm \cite[Definition 7.3 and Lemma 7.4]{terwilligerS3}
	For each $r \in \{1,2,3\}$ and for $i \in \{0,1,\ldots,D\}$, define the linear map $E_i^{*(r)}:V^{\otimes 3} \rightarrow V^{\otimes 3}$ as follows. For $x,y,z \in X$, 
	\begin{align*}
	E_i^{*(1)}(x \otimes y \otimes z) & = x \otimes y \otimes z\ \delta_{i,\partial(y,z)}.\\
	E_i^{*(2)}(x \otimes y \otimes z) & = x \otimes y \otimes z\ \delta_{i,\partial(x,z)}.\\
	E_i^{*(3)}(x \otimes y \otimes z) & = x \otimes y \otimes z\ \delta_{i,\partial(x,y)}.
	\end{align*}
	The maps $E_i^{*(1)}, E_i^{*(2)}, E_i^{*(3)}$ are the primitive idempotents of $A^{*(1)}, A^{*(2)}, A^{*(3)}$, respectively.
\end{definition}

	Let $r \in \{1,2,3\}$ be given. By Definitions \ref{defn: the dual adjacency maps on tensor V}--\ref{defn: the dual primitive idempotent maps on tensor V}, we have
	
	\begin{enumerate}
	\item[(H1)] $A^{*(r)} = \sum_{i=0}^D \theta_i^* E_i^{*(r)}$,
	\item[(H2)] $E_i^{*(r)}E_j^{*(r)} = \delta_{i,j}E_i^{*(r)}$ for $i,j \in \{0,1,\ldots,D\}$,
	\item[(H3)] $A^{*(r)}E_i^{*(r)} = \theta_i^* E_i^{*(r)} = E_i^{*(r)}A^{*(r)}$ for $i \in \{0,1,\ldots,D\}$
	\item[(H4)] $I^{\otimes 3} = \sum_{i=0}^D E_i^{*(r)}$,
	\item[(H5)] $E_i^{*(r)} = \prod_{\substack{0 \leq j \leq D, j \neq i}} \frac{A^{*(r)}-\theta_j^*I}{\theta_i^* - \theta_j^*}$.
	\end{enumerate}

\begin{theorem}\cite[Theorem 5.4, Corollary 8.7]{terwilligerS3}\label{thm: S3-symmetric tridiagonal algebra acting on tensor power} Let $\Gamma$ denote a $Q$-polynomial distance-regular graph with eigenvalue sequence $\{\theta_i\}_{i=0}^D$ and dual eigenvalue sequence $\{\theta_i^* \}_{i=0}^D$. Let $\beta, \gamma, \gamma^*, \varrho, \varrho^*$ be the scalars from Lemma \ref{lem: the scalars beta, gamma, etc. from Q-poly DRG} and consider the $S_3$-symmetric tridiagonal algebra $\mathbb{T} = \mathbb{T}(\beta, \gamma, \gamma^*, \varrho, \varrho^*)$. Then $V^{\otimes 3}$ admits a $\mathbb{T}$-module structure where, for $r \in \{1,2,3\}$, the generators $A_r$ and $A_r^*$ of $\mathbb{T}$ act as $A^{(r)}$ and $A^{*(r)}$, respectively.
\end{theorem}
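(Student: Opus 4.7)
The plan is to verify that the assignments $A_r \mapsto A^{(r)}$ and $A_r^* \mapsto A^{*(r)}$ for $r \in \{1,2,3\}$ respect the defining relations (i)--(ii) of $\mathbb{T}$; this yields an algebra homomorphism $\mathbb{T} \to \operatorname{End}(V^{\otimes 3})$ and the required module structure. The commutativity relations in (i) are essentially formal. For distinct $i,j$, $[A^{(i)}, A^{(j)}] = 0$ follows from the tensor identity $(B \otimes I)(I \otimes C) = B \otimes C = (I \otimes C)(B \otimes I)$ applied to the relevant slots; all $A^{*(r)}$ are diagonal in the basis \eqref{eqn: orthonormal basis of tensor V}, so $[A^{*(i)}, A^{*(j)}]=0$ is immediate. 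For $[A^{(r)}, A^{*(r)}] = 0$ the key observation is that $A^{*(r)}$ depends only on the two tensor positions different from $r$ (e.g.\ $A^{*(1)}$ can be written as $I \otimes B$, where $B$ is diagonal on $V \otimes V$ with entries $\theta^*_{\partial(y,z)}$), so again it commutes with $A^{(r)}$ by the same tensor identity.

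The heart of the proof is verifying the tridiagonal relations (ii) for the pairs $(A^{(i)}, A^{*(j)})$ with $i \neq j$. Here I would use a slice decomposition. Take $i=1$, $j=2$; the other five index pairs are handled by permuting the three tensor factors. For each fixed $(y,z) \in X \times X$, set $W_{y,z} = V \otimes \mathbb{C} y \otimes \mathbb{C} z$, so that $V^{\otimes 3} = \bigoplus_{y,z \in X} W_{y,z}$. Both $A^{(1)}$ and $A^{*(2)}$ preserve $W_{y,z}$: on this slice $A^{(1)}$ acts as $A_1$ on the first factor, while $A^{*(2)}$ acts as the diagonal operator on the first factor whose $(x,x)$-entry is $\theta^*_{\partial(x,z)}$, which by \eqref{eqn: dual distance matrices of Gamma} is precisely the dual adjacency matrix $A_1^*(z)$ with respect to the base vertex $z$. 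By Lemma \ref{lem: the scalars beta, gamma, etc. from Q-poly DRG} and the subconstituent algebra theory reviewed in Subsection \ref{subsection: Terwilliger algebra of Gamma}, the pair $(A_1, A_1^*(z))$ on $V$ satisfies both tridiagonal relations with the scalars $\beta, \gamma, \gamma^*, \varrho, \varrho^*$; crucially these scalars depend only on $\Gamma$, not on the base vertex $z$. Hence both relations in (ii) hold on each slice $W_{y,z}$, and therefore on all of $V^{\otimes 3}$.

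The only real obstacle is recognizing the slice structure and matching the diagonal action of $A^{*(j)}$ with the dual adjacency matrix relative to an appropriate base vertex; once this correspondence is in place, the tridiagonal relations on $V^{\otimes 3}$ are inherited from those on $V$ uniformly, precisely because the scalars $\beta, \gamma, \gamma^*, \varrho, \varrho^*$ are vertex-independent. No further calculation is needed, since verifying a polynomial identity in two operators on a direct sum of mutually invariant subspaces reduces to verifying it on each summand, and the symmetry among the three tensor factors then covers all six pairs $(i,j)$ with $i \neq j$ at once.
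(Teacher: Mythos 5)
Your proposal is correct, and it supplies something the paper itself omits: this theorem is imported by citation from \cite[Theorem 5.4, Corollary 8.7]{terwilligerS3}, with no internal proof, so there is nothing in the text to compare against line by line. Your route --- decomposing $V^{\otimes 3}$ into slices $W_{y,z}$ preserved by both $A^{(1)}$ and $A^{*(2)}$, identifying the restriction of $A^{*(2)}$ to $W_{y,z}$ with the dual adjacency matrix $A_1^*(z)$, and then quoting the fact recorded in Subsection \ref{subsection: Terwilliger algebra of Gamma} that the pair $(A_1, A_1^*(z))$ makes $V$ a module for the tridiagonal algebra $T(\beta,\gamma,\gamma^*,\varrho,\varrho^*)$ --- is exactly the mechanism underlying Terwilliger's original argument, which likewise reduces the tridiagonal relations on the tensor space to the subconstituent algebra taken with respect to each base vertex; the $S_3$-symmetry of the construction then covers all six ordered pairs, as you say. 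The two points on which your argument genuinely relies are both correctly identified: (a) the scalars $\beta,\gamma,\gamma^*,\varrho,\varrho^*$ do not depend on the base vertex $z$, which holds because the dual eigenvalues $\theta_i^*$ are $|X|$ times entries of $E_1$, an element of the Bose--Mesner algebra, so they depend only on distance; and (b) an operator identity in two maps preserving every summand of a direct-sum decomposition holds globally iff it holds on each summand. One cosmetic caveat: matching the diagonal entries $\theta^*_{\partial(x,z)}$ with $A_1^*(z)$ uses \eqref{eqn: dual distance matrices of Gamma} together with the expansion $A_1^* = \sum_{i=0}^D \theta_i^* E_i^*$ of the dual adjacency matrix in terms of the dual primitive idempotents, not \eqref{eqn: dual distance matrices of Gamma} alone; with that small repair in the justification, the argument is complete.
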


\begin{proposition}\label{prop: fundamental module Lambda}\cite[Proposition 9.6]{terwilligerS3}
	There exists a unique irreducible $\mathbb{T}$-submodule $\Lambda$ of $V^{\otimes 3}$ that contains $\mathbf{1}^{\otimes 3} := \sum_{x,y,z \in X} x \otimes y \otimes z$. 
\end{proposition}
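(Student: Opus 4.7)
The plan is to realize $\Lambda$ explicitly as the cyclic $\mathbb{T}$-submodule $\mathbb{T}\mathbf{1}^{\otimes 3}$, and then to verify (a) that this submodule is irreducible and (b) that any irreducible $\mathbb{T}$-submodule containing $\mathbf{1}^{\otimes 3}$ coincides with it. Part (b) is immediate: such a submodule $\Lambda'$ contains $\mathbb{T}\mathbf{1}^{\otimes 3} = \Lambda$, and since $\Lambda \neq 0$, irreducibility of $\Lambda'$ forces $\Lambda' = \Lambda$. Thus the substance of the proof is part (a), and the argument splits naturally into a semisimplicity step and a rigidity step.

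For the semisimplicity step, I will show that $V^{\otimes 3}$ is a semisimple $\mathbb{T}$-module. By (B3), (B4) and the fact that each $A_i^*(x)$ is real and diagonal, each generator $A^{(r)}, A^{*(r)}$ of $\mathbb{T}$ acts on $V^{\otimes 3}$ as a Hermitian operator with respect to the induced Hermitian inner product. Consequently the image of $\mathbb{T}$ in $\operatorname{End}(V^{\otimes 3})$ is closed under the adjoint: the generators are Hermitian, and the adjoint of any product of them equals the product taken in reversed order, which again lies in the algebra. Therefore for any $\mathbb{T}$-submodule $W$ the orthogonal complement $W^\perp$ is also $\mathbb{T}$-stable, giving $V^{\otimes 3} = W \oplus W^\perp$. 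Applying this to $\Lambda$ yields an orthogonal decomposition $\Lambda = \bigoplus_i W_i$ into irreducible $\mathbb{T}$-submodules.

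For the rigidity step, I will exhibit a distinguished element of $\mathbb{T}$ whose action singles out a unique irreducible summand of $\Lambda$. Let $P := E_0^{(1)}E_0^{(2)}E_0^{(3)}$. By (G5) each $E_0^{(r)}$ is a polynomial in $A^{(r)}$, and since $A^{(1)}, A^{(2)}, A^{(3)}$ pairwise commute, $P$ lies in (the image of) $\mathbb{T}$, so every $\mathbb{T}$-submodule is $P$-invariant. By (C1), $E_0V = \mathbb{C}\mathbf{1}$, so the range of $P$ equals the one-dimensional space $\mathbb{C}\,\mathbf{1}^{\otimes 3}$; in particular a direct computation gives $P\mathbf{1}^{\otimes 3} = \mathbf{1}^{\otimes 3}$. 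Decomposing $\mathbf{1}^{\otimes 3} = \sum_i v_i$ with $v_i \in W_i$ and applying $P$ yields $\mathbf{1}^{\otimes 3} = \sum_i Pv_i$, where each $Pv_i$ lies in $W_i \cap \mathbb{C}\,\mathbf{1}^{\otimes 3}$. Since this intersection is at most one-dimensional and at least one summand is nonzero, there exists an index $i_0$ with $\mathbf{1}^{\otimes 3} \in W_{i_0}$. Then $\Lambda = \mathbb{T}\mathbf{1}^{\otimes 3} \subseteq W_{i_0} \subseteq \Lambda$, forcing $\Lambda = W_{i_0}$, so $\Lambda$ is irreducible.

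The step I expect to be the main obstacle is the semisimplicity claim, since it requires realizing all six generators $A^{(r)}, A^{*(r)}$ as Hermitian operators simultaneously and verifying carefully that the induced $*$-structure is compatible with the tensor factorization. The real symmetry of the distance matrices and the real diagonal nature of the dual distance matrices make this possible, but one must keep track of how each factor contributes to the Hermitian inner product on $V^{\otimes 3}$. Once semisimplicity is established, the rigidity argument via the projection $P$ onto the joint $\theta_0$-eigenspace of $A^{(1)}, A^{(2)}, A^{(3)}$ is essentially a one-line computation.
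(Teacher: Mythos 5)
The paper does not prove this proposition at all: it is imported by citation from \cite[Proposition 9.6]{terwilligerS3}, so there is no in-paper argument to compare yours against. That said, your blind proof is correct and self-contained. The semisimplicity step is sound: each $A^{(r)}$ acts as the real symmetric matrix $A_1$ on one tensor factor, each $A^{*(r)}$ is diagonal with real entries on the orthonormal basis \eqref{eqn: orthonormal basis of tensor V}, so the image of $\mathbb{T}$ in $\operatorname{End}(V^{\otimes 3})$ is closed under adjoints and every $\mathbb{T}$-submodule decomposes orthogonally into irreducibles. The rigidity step is also sound: $P=E_0^{(1)}E_0^{(2)}E_0^{(3)}$ lies in the image of $\mathbb{T}$ by (G5), has range $\mathbb{C}\,\mathbf{1}^{\otimes 3}$ by (C1), and fixes $\mathbf{1}^{\otimes 3}$, which forces $\mathbf{1}^{\otimes 3}$ into a single irreducible summand $W_{i_0}$, whence $\mathbb{T}\mathbf{1}^{\otimes 3}=W_{i_0}$ is irreducible; uniqueness then follows as you say. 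This is essentially the argument in Terwilliger's source paper (semisimplicity from the $*$-closed action, plus the rank-one projection onto $\operatorname{span}(\mathbf{1}^{\otimes 3})$), so your route matches the proof the paper implicitly relies on. One small point to tighten: for the Hermiticity of $A^{*(r)}$, cite Definition \ref{defn: the dual adjacency maps on tensor V} directly --- these operators are diagonal with real eigenvalues $\theta_i^*$ on an orthonormal basis --- rather than phrasing it in terms of the matrices $A_i^*(x)$, which do not act on a single tensor factor in the naive sense (rather, $A^{*(3)}=\sum_{x\in X} xx\otimes A_1^*(x)\otimes I$ and similarly for the others).
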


We refer to $\Lambda$ as the fundamental $\mathbb{T}$-module (see \cite[Definition 9.7]{terwilligerS3}). We end this section with some vectors found in $\Lambda$.

\begin{definition}\rm \cite[Definition 9.9]{terwilligerS3}\label{defn: the vectors P h i j}
	For $h,i,j \in \{0,1,\ldots, D\}$, define
	\begin{align*}
	P_{h,i,j} & = \textstyle \sum_{\substack{x, y, z \in X\\ \partial(y,z)=h\\ \partial(x,z) = i\\ \partial(x,y)=j}} x \otimes y \otimes z.
	\end{align*}
\end{definition}

By \cite[Lemma 9.10, Lemma 9.11, and Lemma 9.12]{terwilligerS3}, we have
\begin{enumerate}
	\item[(I1)] For $h,i,j \in \{0,1,\ldots,D\}$, $P_{h,i,j} = E_h^{*(1)}E_i^{*(2)}E_j^{*(3)}(\mathbf{1}^{\otimes 3})$,
	\item[(I2)] For $h,i,j \in \{0,1,\ldots,D\}$, $P_{h,i,j} \in \Lambda$,
	\item[(I3)] The vectors in $\{P_{h,i,j}\ |\ h,i,j\in \{0,1,\ldots,D\} \}$ are mutually orthogonal,
	\item[(I4)] For $h,i,j\in \{0,1,\ldots,D\}$, $P_{h,i,j} = \mathbf{0}$ if and only if $p^{h}_{i,j} = 0$. 
\end{enumerate}

\begin{definition}\label{defn: the vectors Q h i j}\rm \cite[Definition 9.14]{terwilligerS3} For $h,i,j \in \{0,1,\ldots,D\}$, define
	\begin{align*}
	Q_{h,i,j} & = \textstyle |X| \sum_{x \in X} E_hx \otimes E_ix \otimes E_jx.
	\end{align*}
\end{definition}

By \cite[Lemma 9.15, Lemma 9.16, and Lemma 9.17]{terwilligerS3}, we have
\begin{enumerate}
	\item[(J1)] For $h,i,j \in \{0,1,\ldots,D\}$, $Q_{h,i,j} = |X|E_h^{(1)}E_i^{(2)}E_j^{(3)}(P_{0,0,0})$,
	\item[(J2)] For $h,i,j \in \{0,1,\ldots,D\}$, $Q_{h,i,j} \in \Lambda$,
	\item[(J3)] The vectors in $\{Q_{h,i,j}\ |\ h,i,j\in \{0,1,\ldots,D\} \}$ are mutually orthogonal,
	\item[(J4)] For $h,i,j\in \{0,1,\ldots,D\}$, $Q_{h,i,j} = \mathbf{0}$ if and only if $q^{h}_{ij} = 0$. 
\end{enumerate}

\section{The tensor space $V^{\otimes 3}$ as a module for $\operatorname{Aut}(\Gamma)$}

We continue with the $Q$-polynomial distance-regular graph $\Gamma$ and use notations from Sections \ref{sect: Q-polynomial DRGs}--\ref{sect: linear maps on tensor space}. In this section, we explore how $V^{\otimes 3}$ is a module for the automorphism group of $\Gamma$.\\

By an automorphism of $\Gamma$, we mean a permutation $g: X \rightarrow X$ satisfying $\partial(x,y) = \partial(g(x),g(y))$ for all $x,y \in X$. The set $\operatorname{Aut}(\Gamma)$ of all automorphisms of $\Gamma$ forms a group under composition. Throughout, let $G$ denote a subgroup of $\operatorname{Aut}(\Gamma)$.

\begin{definition}\label{defn: G action on X times X times X}\rm 
	We endow $X \times X \times X$ with a $G$-action given by
	\begin{align}
	g(x, y, z) & = (g(x), g(y), g(z)), \quad (x,y,z \in X,\ g \in G)
	\end{align}
	Let $\mathcal{O}(\Gamma)$ denote the set of all orbits from such $G$-action.
\end{definition}

By Definition \ref{defn: G action on X times X times X} and since \eqref{eqn: orthonormal basis of tensor V} is a basis of $V^{\otimes 3}$, each $g \in G$ may be viewed a linear transformation $g:V^{\otimes 3} \rightarrow V^{\otimes 3}$ such that
\begin{align*}
g(x \otimes y \otimes z) & = g(x) \otimes g(y) \otimes g(z), \quad (x,y,z \in X).
\end{align*}
In other words, $V^{\otimes 3}$ is a $G$-module. 
Define $\operatorname{Fix}(G) = \{v \in V^{\otimes 3}\ |\ g(v) = v\ \forall g \in G \}$. 

\begin{definition}\label{defn: characteristic vector}\rm \cite[Definition 10.5]{terwilligerS3}
	For each $\Omega \in \mathcal{O}(\Gamma)$, we define the vector
	$$\chi_{\Omega}  = \textstyle \sum_{(x,y,z) \in \Omega} x \otimes y \otimes z \in V^{\otimes 3}.$$
	We call $\chi_{\Omega}$ the characteristic vector of $\Omega$.
\end{definition}

By \cite[pp. 21--22]{terwilligerS3}, we have
\begin{enumerate}
	\item[(K1)] For $g \in G$ and $B \in \mathbb{T}$, we have $gB = Bg$ on $V^{\otimes 3}$,
	\item[(K2)] $\operatorname{Fix}(G)$ is a $\mathbb{T}$-submodule of $V^{\otimes 3}$,
	\item[(K3)] $\Lambda \subseteq \operatorname{Fix}(G)$,
	\item[(K4)] $\{\chi_{\Omega}\ |\ \Omega \in \mathcal{O}(\Gamma) \}$ is an orthogonal basis of $\operatorname{Fix}(G)$,
	\item[(K5)] $\operatorname{dim}(\operatorname{Fix}(G)) = |\mathcal{O}(\Gamma)|$.
\end{enumerate}

\section{The cycle graph $\Gamma_a$} \label{sect: cycles and their properties}

In this section, we investigate a family of $Q$-polynomial distance-regular graphs known as cycles. We begin with some notations.\\

For a fixed integer $D \geq 2$, define
\begin{align*}
	X_e = \{0,1,\ldots,2D-1\} \text{ and } X_o = \{0,1,\ldots,2D\}. 
\end{align*}
For $a \in \{o,e\}$,  let $\Gamma_a = (X_a,R_a)$ denote the graph with vertex set $X_a$ and edge set $R_a =
\{(x,y)\ |\ x,y \in X_a \text{ and } |x-y| \equiv \pm 1\ (\operatorname{mod}\ |X_a|) \}$. Observe that $\Gamma_a$ is a cycle on $|X_a|$ vertices with diameter $D$. For convenience, view $-1$ as the vertex $|X_a|-1$ and $|X_a|$ as the vertex $0$. For $z \in X_a$, the set of all neighbors of $z$ is given by
\begin{align}\label{eqn: neighbors of z}
\Gamma_a(z) & = \{z-1, z+1 \}.
\end{align}

\subsection{The eigenvalue sequences of $\Gamma_a$}\label{subsect: eigenvalue sequences of cycles}

For $a\in \{e,o\}$, let $A_0, A_1, \ldots, A_D$ and $E_0, E_1, \ldots, E_D$ denote the distance matrices and primitive idempotents of $\Gamma_a$, respectively. let $A_0^*, A_1^*, \ldots, A_D^*$ denote the dual distance matrices of $\Gamma_a$ with respect to vertex $0$. Let $\zeta_a$ denote a fixed primitive $|X_a|\textsuperscript{th}$ root of unity. Let $V_a$ denote the standard module of $\Gamma_a$ with orthonormal basis $X_a$.

\begin{lemma}\label{lem: eigenvectors of A1}
	For each $j\in X_a$, we define the vector $v_j := \textstyle |X_a|^{-1/2} \sum_{x \in X_a} \zeta_a^{xj}x \in V_a$. For convenience, we write $v_{|X_a|}:=v_0$. Then the following hold.
	\begin{enumerate}[(i)]
		\item $A_1v_j = (\zeta_a^{j}+\zeta_a^{-j})v_j$.
		\item If $a=e$, then $\{v_j,v_{2D-j}\}$ spans the eigenspace of $A_1$ corresponding to $(\zeta_e^{j}+\zeta_e^{-j})$.
		\item If $a=o$, then $\{v_j,v_{2D+1-j}\}$ spans the eigenspace of $A_1$ corresponding to $(\zeta_o^{j}+\zeta_o^{-j})$.
	\end{enumerate}
\end{lemma}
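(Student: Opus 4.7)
For part (I), the plan is a direct computation. Since $\Gamma_a(x) = \{x-1, x+1\}$ with arithmetic modulo $|X_a|$, equation \eqref{eqn: action of A1 on each basis vector} gives $A_1 x = (x-1) + (x+1)$ for every $x \in X_a$. Substituting this into the definition of $v_j$ yields
\begin{align*}
A_1 v_j \;=\; |X_a|^{-1/2} \sum_{x \in X_a} \zeta_a^{xj}\bigl((x-1) + (x+1)\bigr).
\end{align*}
Reindexing the two sums by $y = x-1$ and $y = x+1$ respectively (which is well-defined modulo $|X_a|$ since $\zeta_a^{|X_a|} = 1$) produces $\zeta_a^{j}\, v_j + \zeta_a^{-j}\, v_j$, which is the required eigenvalue equation. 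I expect this to be two or three lines of calculation with no real obstruction.

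For parts (II) and (III), the plan is first to show that $\{v_j\}_{j \in X_a}$ is itself a basis of $V_a$, and then to identify the coincidences among the eigenvalues $\zeta_a^j + \zeta_a^{-j}$. A direct computation using the geometric-series identity $\sum_{x \in X_a} \zeta_a^{x(k-j)} = |X_a|\,\delta_{j,k}$ shows that the $v_j$ are pairwise orthogonal with unit norm, hence form an orthonormal basis of $V_a$. Combined with part (I), this already gives the spectral decomposition $V_a = \bigoplus_{j \in X_a} \mathbb{C} v_j$ of $A_1$, so every eigenspace of $A_1$ is spanned by those $v_k$ whose eigenvalue matches.

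Next I would observe that $\zeta_a^k + \zeta_a^{-k} = \zeta_a^j + \zeta_a^{-j}$ holds if and only if $\zeta_a^k$ is a root of $t^2 - (\zeta_a^j + \zeta_a^{-j})t + 1 = 0$, i.e.\ $\zeta_a^k \in \{\zeta_a^j, \zeta_a^{-j}\}$. Since $\zeta_a$ has order $|X_a|$, this happens precisely when $k \equiv \pm j \pmod{|X_a|}$. In case $a=e$ with $|X_e|=2D$, the two solutions in $X_e$ are $k = j$ and $k = 2D-j$, giving (II); in case $a=o$ with $|X_o|=2D+1$, they are $k = j$ and $k = 2D+1-j$, giving (III). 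The set $\{v_j, v_{|X_a|-j}\}$ (interpreting $v_{|X_a|} = v_0$) therefore spans the eigenspace in question.

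The only subtle point, rather than any real obstacle, is that the listed spanning set may be redundant: when $j \equiv -j \pmod{|X_a|}$ — that is, $j \in \{0, D\}$ in the even case and $j = 0$ in the odd case — the two vectors coincide and the eigenspace is one-dimensional. Since the lemma only claims that $\{v_j, v_{|X_a|-j}\}$ \emph{spans} the eigenspace, this causes no issue and the proof is complete.
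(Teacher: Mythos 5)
Your proposal is correct and follows essentially the same route as the paper, whose proof is a one-line appeal to the formulas $A_1x=\sum_{\xi\in\Gamma_a(x)}\xi$ and $\Gamma_a(z)=\{z-1,z+1\}$: part (I) is exactly the reindexing computation you perform, and your treatment of (II)--(III) (orthonormality of the $v_j$ via the geometric-series identity, then the observation that $\zeta_a^k+\zeta_a^{-k}=\zeta_a^j+\zeta_a^{-j}$ forces $k\equiv\pm j\pmod{|X_a|}$) supplies precisely the details the paper leaves implicit. Your remark that the spanning set degenerates to a single vector when $j\equiv -j\pmod{|X_a|}$ is also the right reading of the statement, so nothing is missing.
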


\begin{proof}
	Immediate from \eqref{eqn: action of A1 on each basis vector} and \eqref{eqn: neighbors of z}. 
\end{proof}

\begin{corollary}\label{cor: primitive idempotents of Gamma a}
	If $a = e$, the primitive idempotents $E_0, E_1, \ldots, E_D$ of $\Gamma_e$ are given by
	\begin{align*}
	E_0 & = \textstyle \frac{1}{2D} \sum_{x,y \in X_e} xy,\\
	E_j & = \textstyle \frac{1}{2D} \sum_{x,y \in X_e} (\zeta_e^{(x-y)j}+\zeta_e^{-(x-y)j})xy, \quad (j\in\{1,2,\ldots,D-1\}),\\
	E_n & = \textstyle \frac{1}{2D}\sum_{x,y \in X_e} \zeta_e^{(x-y)n}xy.
	\end{align*}
	If $a = o$, then the primitive idempotents $E_0, E_1, \ldots, E_D$ of $\Gamma_o$ are given by
	\begin{align*}
	E_0 & = \textstyle \frac{1}{2D+1} \sum_{x,y \in X_o} xy,\\
	E_j & = \textstyle \frac{1}{2D+1} \sum_{x,y \in X_o} (\zeta_o^{(x-y)j}+\zeta_o^{-(x-y)j})xy,\quad (j\in\{1,2,\ldots,D\}). 
	\end{align*}
\end{corollary}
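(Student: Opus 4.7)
The key observation is that by properties (C1)--(C5), the primitive idempotent $E_j$ of $\Gamma_a$ is the orthogonal projection of $V_a$ onto the $\theta_j$-eigenspace of $A_1$, where $\theta_j = \zeta_a^j + \zeta_a^{-j}$. Thus the plan is to explicitly compute these orthogonal projections using the eigenvectors $\{v_j\}_{j \in X_a}$ from Lemma \ref{lem: eigenvectors of A1}, and then write each projection as an element of the basis $\{xy\ |\ x,y \in X_a\}$ of $\operatorname{End}(V_a)$.

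The steps are as follows. First, verify that $\{v_j\}_{j \in X_a}$ is an orthonormal basis of $V_a$; this reduces to the discrete orthogonality relation $|X_a|^{-1} \sum_{x \in X_a} \zeta_a^{x(k-j)} = \delta_{j,k}$ for $j,k \in X_a$, which holds because $\zeta_a$ is a primitive $|X_a|^{\textsuperscript{th}}$ root of unity. Second, observe that for each unit vector $v_k$, the $xy$-entry of the rank-one orthogonal projection onto the span of $v_k$ is $(v_k)_x \overline{(v_k)_y} = |X_a|^{-1} \zeta_a^{(x-y)k}$. Third, use parts (ii)--(iii) of Lemma \ref{lem: eigenvectors of A1} to identify the eigenspace decomposition of $A_1$: in the case $a = e$, the lines spanned by $v_0$ and $v_D$ yield $E_0$ and $E_D$ respectively, while $\operatorname{span}\{v_j, v_{2D-j}\}$ yields $E_j$ for $1 \leq j \leq D-1$; the analogous statement for $a = o$ uses $\operatorname{span}\{v_j, v_{2D+1-j}\}$ for $1 \leq j \leq D$. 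Fourth, express each $E_j$ as the sum of the rank-one projections onto the orthonormal basis of the corresponding eigenspace, then simplify via $\zeta_a^{|X_a|(x-y)} = 1$, which converts $\zeta_a^{(x-y)(|X_a|-j)}$ into $\zeta_a^{-(x-y)j}$ and produces the claimed closed forms.

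This is essentially a direct computation, with no substantive obstacle beyond careful bookkeeping of indices. The only point requiring attention is the distinction between the cases $a=e$ (where both $j=0$ and $j=D$ yield one-dimensional eigenspaces, since $\zeta_e^D = -1 \neq \zeta_e^j$ for $1 \leq j \leq D-1$) and $a=o$ (where only $j=0$ yields a one-dimensional eigenspace, the eigenvalues $\zeta_o^j + \zeta_o^{-j}$ being mutually distinct for $0 \leq j \leq D$); this parity split is exactly what accounts for the separate formula for $E_D$ given in the even case.
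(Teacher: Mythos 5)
Your proposal is correct and takes essentially the same route as the paper: the paper's proof also builds each $E_j$ from the eigenvectors of Lemma \ref{lem: eigenvectors of A1}, writing $E_j = v_j\bar{v}_j^t$ for the one-dimensional eigenspaces and $E_j = v_j\bar{v}_j^t + v_{2D-j}\bar{v}_{2D-j}^t$ (resp. $v_j\bar{v}_j^t + v_{2D+1-j}\bar{v}_{2D+1-j}^t$) for the two-dimensional ones, then reads off the entries. Your write-up simply makes explicit the orthonormality of $\{v_j\}$, the entry formula $|X_a|^{-1}\zeta_a^{(x-y)k}$, and the reduction $\zeta_a^{(x-y)(|X_a|-j)} = \zeta_a^{-(x-y)j}$ that the paper leaves implicit.
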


\begin{proof}
	If $a = e$, we obtain the primitive idempotents using 
	\begin{align*}
	E_j & = \left\{\begin{array}{ll}
	v_j \bar{v}_{j}^t  & \text{ if } j \in \{0,D\},\\
	v_j\bar{v}_j^t+v_{2D-j}\bar{v}_{2D-j}^t & \text{ otherwise}.
	\end{array} \right.
	\end{align*}
	If $a = o$, we obtain the primitive idempotents using
	\begin{align*}
	E_j & = \left\{\begin{array}{ll}
	v_j \bar{v}_{j}^t  & \text{ if } j = 0,\\
	v_j\bar{v}_j^t+v_{2D+1-j}\bar{v}_{2D+1-j}^t & \text{ otherwise}.
	\end{array} \right.
	\end{align*} 
\end{proof}

\begin{corollary}\label{cor: dual adjacency matrices of Gamma a}
	We have $A_1^* = \textstyle \sum_{y \in X_a} (\zeta_a^{y}+\zeta_a^{-y})yy$.
\end{corollary}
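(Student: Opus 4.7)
The plan is to apply the defining formula \eqref{eqn: dual distance matrices of Gamma} for the dual distance matrices with respect to the fixed vertex $x = 0$, namely $(A_1^*)_{yy} = |X_a| (E_1)_{0y}$, and then read off the entry $(E_1)_{0y}$ from the explicit formula for $E_1$ obtained in Corollary \ref{cor: primitive idempotents of Gamma a}.

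First, I will treat the two cases $a = e$ and $a = o$ uniformly. In both cases, Corollary \ref{cor: primitive idempotents of Gamma a} gives
\begin{align*}
E_1 = \textstyle \frac{1}{|X_a|} \sum_{x,y \in X_a}\bigl(\zeta_a^{(x-y)} + \zeta_a^{-(x-y)}\bigr) xy,
\end{align*}
so that the $0y$-entry of $E_1$ is $(E_1)_{0y} = |X_a|^{-1}(\zeta_a^{-y} + \zeta_a^{y})$. Substituting into \eqref{eqn: dual distance matrices of Gamma} yields
\begin{align*}
(A_1^*)_{yy} = |X_a|(E_1)_{0y} = \zeta_a^{y} + \zeta_a^{-y},
\end{align*}
and since $A_1^*$ is diagonal (by the definition of the dual distance matrices), summing the diagonal contributions $(A_1^*)_{yy}\,yy$ over $y \in X_a$ produces exactly the stated formula.

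There is essentially no obstacle here: the result is a direct computation once Corollary \ref{cor: primitive idempotents of Gamma a} is in hand. The only minor point worth being careful about is the case $j=1$ in the even case, where one must note that $1 \in \{1,2,\ldots,D-1\}$ since $D \geq 2$, so the middle formula for $E_j$ in Corollary \ref{cor: primitive idempotents of Gamma a} applies (rather than the exceptional formulas for $j = 0$ or $j = D$). After that observation, the argument is a one-line substitution in each of the two cases $a\in\{e,o\}$.
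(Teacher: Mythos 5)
Your proposal is correct and follows exactly the paper's route: the paper's proof is the one-liner ``Immediate from \eqref{eqn: dual distance matrices of Gamma} and Corollary \ref{cor: primitive idempotents of Gamma a}'', which is precisely your computation of $(A_1^*)_{yy} = |X_a|(E_1)_{0y} = \zeta_a^{y}+\zeta_a^{-y}$ with base vertex $0$. Your added remark that $j=1$ falls in the range $\{1,\dots,D-1\}$ (since $D\geq 2$) in the even case is a worthwhile detail the paper leaves implicit.
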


\begin{proof}
	Immediate from \eqref{eqn: dual distance matrices of Gamma} and Corollary \ref{cor: primitive idempotents of Gamma a}.
\end{proof}

By Lemma \ref{lem: eigenvectors of A1} and Corollaries \ref{cor: primitive idempotents of Gamma a}--\ref{cor: dual adjacency matrices of Gamma a}, the following hold.
\begin{enumerate}
	\item[(L1)] $\Gamma_a$ has eigenvalue sequence $\{\theta_i\}_{i=0}^D$ with $\theta_i = \zeta_a^{i} + \zeta_a^{-i}$ for $i \in \{0,1,\ldots,D\}$. 
	\item[(L2)] $\Gamma_a$ has dual eigenvalue sequence $\{\theta_i^*\}_{i=0}^D$ with $\theta_i^* = \zeta_a^{i} + \zeta_a^{-i}$ for $i \in \{0,1,\ldots,D\}$. 
\end{enumerate}

By (L1)--(L2) and Lemma \ref{lem: the scalars beta, gamma, etc. from Q-poly DRG}, we have
\begin{enumerate}
	\item[(L3)] $\beta = \zeta_a + \zeta_a^{-1},\ \gamma = \gamma^* = 0,\  \varrho = \varrho^* = -(\zeta_a-\zeta_a^{-1})^2$. 
\end{enumerate}

\subsection{The automorphism group $\operatorname{Aut}(\Gamma_a)$}

The automorphism group $\operatorname{Aut}(\Gamma_a)$ of $\Gamma_a$ is the dihedral group consisting of $|X_a|$ rotation and $|X_a|$ reflection symmetries. Throughout, let $G_a = \operatorname{Aut}(\Gamma_a)$ and let $G_a'$ denote the abelian subgroup of $G_a$ consisting of rotation symmetries. Let $\mathcal{O}(\Gamma_a)$ denote the collection of orbits from the action of $G_a$ on $X_a \times X_a \times X_a$ (see Definition \ref{defn: G action on X times X times X}).

\begin{lemma}\label{lem: transitivity of Aut Gamma}
	For each $x \in X_a$, there exists an element $g \in G_a$ such that $g(x) = 0$. 	
\end{lemma}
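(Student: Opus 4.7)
The plan is to exhibit an explicit automorphism of $\Gamma_a$ that sends the given vertex $x$ to $0$, namely the rotation by $-x$. Concretely, I would define $g: X_a \rightarrow X_a$ by $g(y) = y - x \pmod{|X_a|}$ for all $y \in X_a$ (using the convention already established in Section \ref{sect: cycles and their properties} that vertex labels are taken modulo $|X_a|$).

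First, I would verify that $g$ is a well-defined bijection on $X_a$, which is immediate since translation modulo $|X_a|$ is a bijection on $\{0,1,\ldots,|X_a|-1\}$. Next, I would check that $g$ preserves the edge relation: for $u, v \in X_a$, one has $g(u) - g(v) = u - v$, so $|g(u) - g(v)| \equiv \pm 1 \pmod{|X_a|}$ if and only if $|u - v| \equiv \pm 1 \pmod{|X_a|}$. By the definition of $R_a$ in Section \ref{sect: cycles and their properties}, this means $(u,v) \in R_a$ if and only if $(g(u), g(v)) \in R_a$. Since $\Gamma_a$ is connected and path-length distance is determined by the edge relation, $g$ preserves $\partial$, hence $g \in G_a$. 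Finally, $g(x) = x - x = 0$, as required.

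There is no genuine obstacle here; the only minor care needed is to phrase the argument in terms of the modular convention already adopted for vertex labels, so that $g$ is unambiguously an element of $G_a'$ (the rotation subgroup) rather than merely a set-theoretic map. The lemma essentially records the well-known fact that the dihedral group $G_a$ acts transitively on $X_a$, a fact that will be used in later sections when analyzing $\mathcal{O}(\Gamma_a)$ and $\operatorname{Fix}(G_a)$.
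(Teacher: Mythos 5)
Your proposal is correct and matches the paper's proof, which simply cites ``the rotation symmetries''; you have merely written out the explicit rotation $y \mapsto y - x \pmod{|X_a|}$ and verified that it preserves the edge relation and hence the distance function. No further comparison is needed.
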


\begin{proof}
	Immediate from the rotation symmetries.
\end{proof}

\begin{definition}\label{defn:profile}\rm 
	Let $x,y,z \in X_a$. The ordered triple $(x,y,z)$ is said to have a \emph{distance profile} $(i,j,k)$ whenever $\partial(y,z) = i$, $\partial(x,z) = j$, and $\partial(x,y) = k$. Let $\operatorname{DP}(\Gamma_a)$ denote the number of distinct distance profiles in $\Gamma_a$. 
\end{definition}

\begin{lemma}\label{lem: number of orbits of Aut Gamma}
	With above notation, $|\mathcal{O}(\Gamma_e)| = 2D^2+2$ and $|\mathcal{O}(\Gamma_o)| = 2D^2+2D+1$.
\end{lemma}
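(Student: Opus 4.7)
The plan is to reduce the orbit count to a short Burnside computation on the stabilizer of a single vertex. By Lemma \ref{lem: transitivity of Aut Gamma}, every $G_a$-orbit on $X_a \times X_a \times X_a$ contains a triple of the form $(0,y,z)$, and two such triples $(0,y_1,z_1)$ and $(0,y_2,z_2)$ lie in the same $G_a$-orbit if and only if they are related by an element of the stabilizer $H_a := \operatorname{Stab}_{G_a}(0)$ acting coordinatewise on the last two entries. Hence
\begin{align*}
|\mathcal{O}(\Gamma_a)| \;=\; \bigl|(X_a \times X_a)/H_a\bigr|.
\end{align*}

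Next I would pin down $H_a$ explicitly. The nontrivial rotations of $\Gamma_a$ move every vertex, so only the identity rotation fixes $0$; among the reflections $s_k: x \mapsto k - x \pmod{|X_a|}$, exactly one, namely $\sigma := s_0$ given by $x \mapsto -x$, sends $0$ to $0$. Therefore $H_a = \{1, \sigma\}$ has order $2$, and the Cauchy--Frobenius (Burnside) lemma applied to $H_a$ acting diagonally on $X_a \times X_a$ gives
\begin{align*}
|\mathcal{O}(\Gamma_a)| \;=\; \tfrac{1}{2}\bigl(|X_a|^2 + |\operatorname{Fix}(\sigma)|\bigr),
\end{align*}
where $\operatorname{Fix}(\sigma)$ is the set of pairs $(y,z) \in X_a \times X_a$ with $2y \equiv 0$ and $2z \equiv 0$ modulo $|X_a|$.

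Finally I would split on the parity of $|X_a|$. For $a = e$ we have $|X_e| = 2D$, so $2y \equiv 0 \pmod{2D}$ forces $y \in \{0,D\}$; this yields $|\operatorname{Fix}(\sigma)| = 4$ and the formula gives $(4D^2 + 4)/2 = 2D^2 + 2$. For $a = o$ the modulus $|X_o| = 2D+1$ is odd, so $\gcd(2,|X_o|) = 1$ and the only solution is $y = 0$; thus $|\operatorname{Fix}(\sigma)| = 1$ and the formula gives $((2D+1)^2 + 1)/2 = 2D^2 + 2D + 1$. I do not anticipate a genuine obstacle; the only delicate point is the identification of $H_a$, which in the even case requires distinguishing the $D$ vertex-fixing reflections from the $D$ edge-fixing reflections and verifying that exactly one of them fixes the vertex $0$.
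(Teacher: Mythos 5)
Your proposal is correct and fills in what the paper leaves implicit, but by a slightly different route. The paper's one-line proof invokes Burnside's lemma directly for the full dihedral group $G_a$ of order $2|X_a|$ acting diagonally on $X_a \times X_a \times X_a$: the identity contributes $|X_a|^3$ fixed triples, the nontrivial rotations contribute $0$, and the reflections contribute $8D$ in the even case (the $D$ vertex-reflections fix $2$ vertices each, hence $2^3$ triples, and the $D$ edge-reflections fix none) or $2D+1$ in the odd case (each reflection fixes exactly one vertex), which yields $\tfrac{1}{4D}(8D^3+8D) = 2D^2+2$ and $\tfrac{1}{2(2D+1)}\bigl((2D+1)^3+(2D+1)\bigr) = 2D^2+2D+1$. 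You instead first use vertex-transitivity to identify $\mathcal{O}(\Gamma_a)$ with the orbits of the point stabilizer $H_a = \{1,\sigma\}$, $\sigma: x \mapsto -x$, acting on $X_a \times X_a$, and only then apply Burnside, now to a two-element group; your identification of $H_a$ and the fixed-pair counts ($4$ in the even case, since $2y \equiv 0 \pmod{2D}$ forces $y \in \{0,D\}$; $1$ in the odd case, since $2$ is invertible modulo $2D+1$) are correct, and the arithmetic matches. The trade-off: the paper's route is the standard direct computation but requires surveying all $2|X_a|$ group elements, while yours requires justifying the stabilizer reduction (a standard fact for transitive actions, which Lemma \ref{lem: transitivity of Aut Gamma} supports) and in exchange reduces the computation to the fixed points of a single involution.
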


\begin{proof}
	Use Burnside's lemma.
\end{proof}

\begin{lemma}\label{lem: number of distance profiles}
	We have $\operatorname{DP}(\Gamma_e) = 2D^2+2$ and $\operatorname{DP}(\Gamma_o) = 2D^2+2D+1$.
\end{lemma}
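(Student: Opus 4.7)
The plan is to show that the distance profile is a complete invariant for the $G_a$-action on $X_a^3$, so that $\operatorname{DP}(\Gamma_a) = |\mathcal{O}(\Gamma_a)|$, and then invoke Lemma \ref{lem: number of orbits of Aut Gamma}. Since every $g \in G_a$ preserves the path-length metric $\partial$, two triples in the same $G_a$-orbit automatically share a distance profile, yielding a well-defined surjection $\mathcal{O}(\Gamma_a) \to \{\text{distance profiles}\}$. Surjectivity is by the very definition of distance profile, so the issue is injectivity: if $(x,y,z)$ and $(x',y',z')$ have common distance profile $(i,j,k)$, they must lie in the same $G_a$-orbit.

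By Lemma \ref{lem: transitivity of Aut Gamma}, applying suitable rotations from $G_a$ to each triple, I may assume $x = x' = 0$. Since $G_a$ is dihedral of order $2|X_a|$ acting transitively on $X_a$, the stabilizer $\operatorname{Stab}_{G_a}(0)$ has order two, consisting of the identity and the reflection $\rho : w \mapsto -w \pmod{|X_a|}$. The task therefore reduces to showing that $(0,y,z)$ and $(0,y',z')$ differ by at most $\rho$.

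Writing $n := |X_a|$, the equations $\partial(0,y) = k$ and $\partial(0,z) = j$ force $y \in \{k, -k\}$ and $z \in \{j, -j\}$ modulo $n$. The (at most) four candidate pairs partition into two $\rho$-orbits on $X_a^2$, namely $\{(k, j), (-k, -j)\}$ and $\{(k, -j), (-k, j)\}$; within the former the distance $\partial(y,z)$ equals $|k-j|$, while within the latter it equals $\min(k+j,\ n-k-j)$. An elementary case check shows that these two values coincide only when $k$ or $j$ lies in $\{0\}$ (in the odd-cycle case) or in $\{0,D\}$ (in the even-cycle case) --- and precisely in those degenerate situations one has $-k \equiv k$ or $-j \equiv j \pmod n$, so the two $\rho$-orbits also collapse into one. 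Consequently the coordinate $i = \partial(y,z)$ of the distance profile uniquely identifies the $\rho$-orbit of $(y,z)$, completing the injectivity argument.

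Combining this bijection $\mathcal{O}(\Gamma_a) \leftrightarrow \{\text{distance profiles}\}$ with Lemma \ref{lem: number of orbits of Aut Gamma} yields the claimed values $2D^2+2$ and $2D^2+2D+1$. The main obstacle will be the careful bookkeeping for the boundary values $k, j \in \{0, D\}$: the apparent four choices for $(y,z)$ collapse in parity-dependent ways, and I must verify both that the reduced set of orbits is still separated by the distance profile and that no distance profile is doubly attained.
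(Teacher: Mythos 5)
Your proof is correct, but it runs in the opposite direction from the paper's argument. The paper never touches the group action in this lemma: it exhibits the explicit list $O_e$ in \eqref{eqn: index set for orbits in even cycles} (resp.\ $O_o$ in \eqref{eqn: index set for orbits in odd cycles}) of triples with mutually distinct distance profiles, counts that list directly to get $2D^2+2$ (resp.\ $2D^2+2D+1$), and proves completeness via the mirror map $(x,y,0)\mapsto(|X_a|-x,\,|X_a|-y,\,0)$, which preserves profiles. Only afterwards does the paper combine this count with the Burnside count of Lemma \ref{lem: number of orbits of Aut Gamma} to deduce Corollary \ref{cor: distance profiles of orbits}, by the pigeonhole observation that a surjection between finite sets of equal cardinality is a bijection. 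You instead prove Corollary \ref{cor: distance profiles of orbits} first and directly --- via the stabilizer of the vertex $0$, which is $\{1,\rho\}$ with $\rho:w\mapsto -w$, together with the case analysis of when $|k-j| = \min(k+j,\,|X_a|-k-j)$ --- and then import the Burnside count to conclude $\operatorname{DP}(\Gamma_a)=|\mathcal{O}(\Gamma_a)|$. Your stabilizer computation is sound: the coincidence of the two candidate values of $\partial(y,z)$ occurs exactly for $k$ or $j$ in $\{0,|X_a|/2\}$, which is precisely where $-k\equiv k$ or $-j\equiv j\pmod{|X_a|}$, so the two $\rho$-orbits of pairs collapse exactly when their profile values agree, and no profile is doubly attained. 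There is no circularity, since Lemma \ref{lem: number of orbits of Aut Gamma} is proved independently. What you lose relative to the paper is the explicit set of representatives $O_a$: the paper's proof does double duty, because $O_a$ is reused throughout the sequel (Assumption \ref{assump: assumptions}, the map $\phi$ of Corollary \ref{cor: the map phi}, and the action formulas in Lemmas \ref{lem: action of the adjacency map on corner points} and \ref{lem: action of the adjacency map on non-corner points}), whereas your route leaves those representatives still to be constructed. What you gain is a self-contained, conceptual proof of the orbit--profile bijection, rather than one obtained by comparing two separately computed counts.
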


\begin{proof}
	We prove $\operatorname{DP}(\Gamma_e) = 2D^2 + 2$. By Lemma \ref{lem: transitivity of Aut Gamma}, it is sufficient to count distance profiles of triples $(x,y,0)$ for $x,y \in X_e$. Observe that
	\begin{align}\label{eqn: index set for orbits in even cycles}
	O_e := \{(x,y,0)\ |\ 0 \leq x \leq D, y \in \{0,D\} \} \cup \{(x,y,0)\ |\ x \in X_e, 1 \leq y \leq D-1 \}
	\end{align}
	is a collection of triples with mutually distinct distance profiles. 
	We claim that this list completes all possible distance profiles in $\Gamma_e$. If $(x,y,0) \notin O_e$, then $(x',y',0):= (|X_e|-x,|X_e|-y,0) \in O_e$. Claim holds since the triples $(x,y,0)$ and $(x',y',0)$ have the same distance profile. To prove $\operatorname{DP}(\Gamma_o) = 2D^2+2D + 1$, we show that
	\begin{align}\label{eqn: index set for orbits in odd cycles}
	O_o := \{(x,0,0)\ |\ 0 \leq x \leq D \} \cup \{(x,y,0)\ |\ x \in X_e, 1 \leq y \leq D \}
	\end{align}
	is a collection of triples with mutually distinct distance profiles in a similar fashion.
\end{proof}

\begin{corollary}\label{cor: distance profiles of orbits}
	No two orbits have the same distance profile.
\end{corollary}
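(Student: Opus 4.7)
The plan is to deduce the corollary immediately from the two preceding lemmas by a counting argument. The key observation is that the distance profile is an orbit invariant: if $g \in G_a$ and $(x,y,z) \in X_a^3$, then since $g$ preserves $\partial$, the triple $g(x,y,z)=(g(x),g(y),g(z))$ has the same distance profile as $(x,y,z)$. Hence there is a well-defined map
\begin{align*}
\phi : \mathcal{O}(\Gamma_a) \longrightarrow \{\text{distance profiles in } \Gamma_a\}
\end{align*}
sending an orbit $\Omega$ to the common distance profile of any triple in $\Omega$.

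Next I would note that $\phi$ is surjective: every distance profile is, by definition, realized by some triple $(x,y,z) \in X_a^3$, and this triple lies in the orbit $\Omega$ with $\phi(\Omega) = (\partial(y,z),\partial(x,z),\partial(x,y))$.

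Finally, by Lemma \ref{lem: number of orbits of Aut Gamma} and Lemma \ref{lem: number of distance profiles} the source and target of $\phi$ have the same finite cardinality (namely $2D^2+2$ when $a=e$ and $2D^2+2D+1$ when $a=o$). A surjection between finite sets of equal size is a bijection, so $\phi$ is injective; that is, no two distinct orbits share a distance profile. There is no significant obstacle here, as both lemmas have already been established; the only minor point to be careful about is the well-definedness of $\phi$, which follows from $g \in G_a$ being an isometry of $\Gamma_a$.
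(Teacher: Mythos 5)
Your proof is correct and is essentially the paper's own argument: the paper also deduces the corollary directly from Lemmas \ref{lem: number of orbits of Aut Gamma} and \ref{lem: number of distance profiles}, with the counting step (a surjection between finite sets of equal cardinality is a bijection) left implicit, which you have simply spelled out. One cosmetic remark: the symbol $\phi$ is used later in the paper (Corollary \ref{cor: the map phi}) for a different map, so you may wish to rename your orbit-to-profile map to avoid a clash.
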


\begin{proof}
	Immediate from Lemmas \ref{lem: number of orbits of Aut Gamma}--\ref{lem: number of distance profiles}.
\end{proof}

\begin{corollary}\label{cor: the map phi}
	For $a \in \{e,o\}$, let $O_a$ be as in \eqref{eqn: index set for orbits in even cycles} or \eqref{eqn: index set for orbits in odd cycles}. Then there exists a surjective map $\phi: X_a \times X_a \times X_a \rightarrow O_a$ such that $(x,y,z),(x,y,z)^\phi$ belong to the same orbit in $\mathcal{O}(\Gamma_a)$. Moreover, $(x_1,y_1,z_1)^\phi = (x_2,y_2,z_2)^\phi$ if and only if $(x_1,y_1,z_1)$ and $(x_2,y_2,z_2)$ have the same distance profile.
\end{corollary}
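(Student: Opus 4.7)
The plan is to exhibit $O_a$ as a complete set of $G_a$-orbit representatives inside $X_a \times X_a \times X_a$; then $\phi$ is forced to be the map sending each triple to the unique element of $O_a$ lying in its orbit. All the substantive work has essentially been done in the preceding lemmas, so what remains is a short assembly argument, and I do not anticipate a serious obstacle—the main content is simply recognizing that two independently computed cardinalities agree.

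First I would match cardinalities. A direct count gives $|O_e| = 2(D+1) + 2D(D-1) = 2D^2 + 2$ and $|O_o| = (D+1) + (2D+1)D = 2D^2 + 2D + 1$, each equal to $|\mathcal{O}(\Gamma_a)|$ by Lemma \ref{lem: number of orbits of Aut Gamma}; disjointness of the two pieces in the definition of $O_a$ is immediate from the $y$-coordinate restrictions. Next, since every $g \in G_a$ preserves the distance function $\partial$, the distance profile $(\partial(y,z),\partial(x,z),\partial(x,y))$ is constant on each $G_a$-orbit. By the proof of Lemma \ref{lem: number of distance profiles}, the elements of $O_a$ have pairwise distinct distance profiles, hence lie in pairwise distinct orbits. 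Combined with the equality of cardinalities, this forces each $G_a$-orbit to contain exactly one element of $O_a$.

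Given this, I would define $\phi: X_a \times X_a \times X_a \rightarrow O_a$ by letting $(x,y,z)^\phi$ be the unique element of $O_a$ lying in the $G_a$-orbit of $(x,y,z)$. By construction $(x,y,z)$ and $(x,y,z)^\phi$ share an orbit, and surjectivity is automatic since $o^\phi = o$ for every $o \in O_a$. Finally, if $(x_1,y_1,z_1)^\phi = (x_2,y_2,z_2)^\phi$, then the two triples lie in a common orbit and so have equal distance profiles; conversely, equality of distance profiles places the two triples in the same orbit by Corollary \ref{cor: distance profiles of orbits}, and then uniqueness of the orbit representative in $O_a$ forces their $\phi$-values to coincide.
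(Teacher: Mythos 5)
Your proof is correct and follows essentially the same route as the paper, whose proof is simply a citation of Lemma \ref{lem: number of orbits of Aut Gamma}, Corollary \ref{cor: distance profiles of orbits}, and the proof of Lemma \ref{lem: number of distance profiles} --- exactly the three ingredients you assemble (cardinality matching, pairwise distinct profiles of elements of $O_a$, and profiles determining orbits). You have merely written out explicitly the counting and the orbit-representative argument that the paper leaves implicit.
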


\begin{proof}
	Clear from Lemma \ref{lem: number of orbits of Aut Gamma}, Corollary \ref{cor: distance profiles of orbits}, and the proof of Lemma \ref{lem: number of distance profiles}.
\end{proof}

\section{A basis for the fundamental $\mathbb{T}$-module}\label{sect: fundamental T-module of cycles}

For the rest of the paper, we adopt the following assumption.

\begin{assumption}\label{assump: assumptions}\rm 
	Fix an integer $D \geq 2$. For $a \in \{o,e\}$, let $\Gamma_a = (X_a,R_a)$ denote the cycle with vertex set $X_a$ and diameter $D$. Note that $\Gamma_a$ has eigenvalue sequences $\{\theta_i\}_{i=0}^D$ and $\{\theta_i^* \}_{i=0}^D$ given in (L1)--(L2). The scalars $\beta, \gamma, \gamma^*, \varrho, \varrho^*$ associated with $\Gamma_a$ are found in (L3). Let $\mathbb{T}=\mathbb{T}(\beta, \gamma, \gamma^*, \varrho, \varrho^*)$ be the corresponding $S_3$-symmetric tridiagonal algebra. Let $V_a$ denote the standard module of $\Gamma_a$ and let $\Lambda_a$ denote the fundamental $\mathbb{T}$-module in $V_a^{\otimes 3}$. Let $G_a = \operatorname{Aut}(\Gamma_a)$ and let $\mathcal{O}(\Gamma_a)$ denote the collection of all orbits from the action of $G_a$ on $X_a \times X_a \times X_a$. Let $O_a$ be as in \eqref{eqn: index set for orbits in even cycles} or \eqref{eqn: index set for orbits in odd cycles}. For $(x,y,0) \in O_a$, let $\Omega_a(x,y,0)$ denote the orbit in $\mathcal{O}(\Gamma_a)$ containing $(x,y,0)$. Finally, let $G'_a$ denote the subgroup of $G_a$ consisting of all rotation symmetries.
	
\end{assumption}

Here, we find a basis for $\Lambda_a$ made up of common eigenvectors for $A^{*(1)}, A^{*(2)}, A^{*(3)}$.

\begin{lemma}\label{lem: number of nonzero P h i j}
	With Assumption \ref{assump: assumptions}, we have
	\begin{align*}
	|\{P_{h,i,j} \neq \mathbf{0}\ |\ h,i,j \in \{0,1,\ldots,D\} \}| & = \left\{\begin{array}{ll}
	2D^2+2 & \text{if } a=e,\\
	2D^2+2D+1 & \text{if }a=o.
	\end{array} \right.
	\end{align*}
\end{lemma}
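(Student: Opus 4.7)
The plan is to observe that the count of nonzero $P_{h,i,j}$ is exactly the number of realizable distance profiles $\operatorname{DP}(\Gamma_a)$, and then to invoke Lemma \ref{lem: number of distance profiles}.

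Concretely, the first step is to unwind Definition \ref{defn: the vectors P h i j}. Since $\{x\otimes y\otimes z\ |\ x,y,z\in X_a\}$ is a basis of $V_a^{\otimes 3}$, the vector $P_{h,i,j}$ is the sum of basis vectors indexed by triples $(x,y,z)\in X_a^3$ with $\partial(y,z)=h$, $\partial(x,z)=i$, $\partial(x,y)=j$; in particular $P_{h,i,j}\neq\mathbf{0}$ if and only if at least one such triple exists. (One may equivalently cite (I4), but the equivalence is immediate from the definition and avoids any index reshuffling.) Comparing with Definition \ref{defn:profile}, this is the same as saying $(h,i,j)$ is the distance profile of some ordered triple in $X_a^3$, since both conventions use the ordering $(\partial(y,z),\partial(x,z),\partial(x,y))$.

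The second step is just counting. What we have shown gives
\[
|\{(h,i,j)\in\{0,1,\ldots,D\}^3\ :\ P_{h,i,j}\neq\mathbf{0}\}|=\operatorname{DP}(\Gamma_a).
\]
Applying Lemma \ref{lem: number of distance profiles} yields $2D^2+2$ when $a=e$ and $2D^2+2D+1$ when $a=o$, which is the claim.

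There is no serious obstacle here; the real combinatorial work has already been done in Lemma \ref{lem: number of distance profiles} (which uses transitivity via rotations, Lemma \ref{lem: transitivity of Aut Gamma}, to reduce to counting triples with third entry $0$, and then exhibits the explicit set $O_a$). The only point requiring a moment of care is matching the index conventions of Definitions \ref{defn: the vectors P h i j} and \ref{defn:profile}; once this is noted, the lemma follows in essentially one line.
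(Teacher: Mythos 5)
Your proposal is correct and follows exactly the paper's own route: the paper's proof is precisely ``immediate from Definition \ref{defn: the vectors P h i j}, Definition \ref{defn:profile}, and Lemma \ref{lem: number of distance profiles}'', which is the identification of nonzero $P_{h,i,j}$ with realizable distance profiles that you spell out. Your version just makes the matching of index conventions explicit, which is a fair (if minor) elaboration of the same argument.
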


\begin{proof}
	Immediate from Definition \ref{defn: the vectors P h i j}, Definition \ref{defn:profile}, and Lemma \ref{lem: number of distance profiles}.
\end{proof}

\begin{lemma}\label{lem: each char vector is in the fundamental module}
	With Assumption \ref{assump: assumptions}, we have $\chi_{\Omega} \in \Lambda_a$ for each $\Omega \in \mathcal{O}(\Gamma_a)$.
\end{lemma}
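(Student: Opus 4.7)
The plan is to identify each characteristic vector $\chi_\Omega$ with one of the vectors $P_{h,i,j}$ from Definition \ref{defn: the vectors P h i j}, and then invoke (I2), which already guarantees $P_{h,i,j} \in \Lambda_a$.

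First, I would fix an orbit $\Omega \in \mathcal{O}(\Gamma_a)$ and choose any representative $(x_0,y_0,z_0) \in \Omega$. Let $(h,i,j)$ be its distance profile, so $\partial(y_0,z_0) = h$, $\partial(x_0,z_0) = i$, and $\partial(x_0,y_0) = j$. Because every $g \in G_a$ preserves $\partial$ by definition of automorphism, each triple in $\Omega$ carries exactly the same distance profile $(h,i,j)$. Hence $\Omega$ is contained in the set $\{(x,y,z) \in X_a^3 : \partial(y,z)=h,\ \partial(x,z)=i,\ \partial(x,y)=j\}$, which means that $\chi_\Omega$ is a partial sub-sum of $P_{h,i,j}$.

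Next, I would upgrade this inclusion to an equality using Corollary \ref{cor: distance profiles of orbits}. Since no two distinct orbits share the same distance profile, $\Omega$ is the unique orbit whose triples realize $(h,i,j)$. Consequently every triple $(x,y,z) \in X_a^3$ satisfying $\partial(y,z)=h$, $\partial(x,z)=i$, $\partial(x,y)=j$ must lie in $\Omega$, so the two index sets defining $\chi_\Omega$ and $P_{h,i,j}$ coincide, giving $P_{h,i,j} = \chi_\Omega$.

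Finally, (I2) gives $P_{h,i,j} \in \Lambda_a$, so $\chi_\Omega \in \Lambda_a$, as desired. The argument is a short bookkeeping step rather than a real computation; all the genuine content has been packaged into Corollary \ref{cor: distance profiles of orbits} (that orbits and realizable distance profiles are in bijection), so I do not anticipate any serious obstacle.
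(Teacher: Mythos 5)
Your proposal is correct and follows the same route as the paper: both identify $\chi_{\Omega}$ with a nonzero $P_{h,i,j}$ via Corollary \ref{cor: distance profiles of orbits} and then conclude via (I2). You merely spell out the orbit-versus-profile bookkeeping that the paper's proof leaves implicit.
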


\begin{proof}
	By Definition \ref{defn: characteristic vector} and Corollary \ref{cor: distance profiles of orbits}, each $\chi_{\Omega}$ is equal to a vector $P_{h,i,j} \neq \mathbf{0}$ for some $h,i,j \in \{0,1,\ldots,D\}$. By this and (I2), $\chi_{\Omega} \in \Lambda_a$ for $\Omega \in \mathcal{O}(\Gamma_a)$.
\end{proof}

\begin{theorem}\label{thm: basis of the fundamental module}
	With Assumption \ref{assump: assumptions}, we have $\Lambda_a = \operatorname{Fix}(G_a)$. Furthermore,
\begin{align*}
\operatorname{dim}(\Lambda_a) & = \left\{\begin{array}{ll}
\textstyle  2D^2+2 & \text{if } a = e,\\
\textstyle  2D^2+2D+1 & \text{if } a=o.
\end{array} \right.
\end{align*}
\end{theorem}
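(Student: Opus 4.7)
The proof is essentially an assembly of the pieces already built in the previous sections, so my plan is to argue by double containment together with a dimension count.

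First, I would record the inclusion $\Lambda_a \subseteq \operatorname{Fix}(G_a)$, which is immediate from (K3). For the reverse inclusion, I would invoke (K4): the set $\{\chi_{\Omega}\ |\ \Omega \in \mathcal{O}(\Gamma_a)\}$ is an (orthogonal) basis of $\operatorname{Fix}(G_a)$, so it suffices to show that every $\chi_{\Omega}$ lies in $\Lambda_a$. But this is precisely Lemma \ref{lem: each char vector is in the fundamental module}, which was obtained from the key observation (Corollary \ref{cor: distance profiles of orbits}) that in a cycle no two $G_a$-orbits have the same distance profile; hence each $\chi_{\Omega}$ coincides with a nonzero $P_{h,i,j}$, and those lie in $\Lambda_a$ by (I2). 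This gives $\operatorname{Fix}(G_a) \subseteq \Lambda_a$, and therefore $\Lambda_a = \operatorname{Fix}(G_a)$.

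With the equality in hand, the dimension is computed in one line from (K5) and Lemma \ref{lem: number of orbits of Aut Gamma}:
\begin{align*}
\operatorname{dim}(\Lambda_a) \;=\; \operatorname{dim}(\operatorname{Fix}(G_a)) \;=\; |\mathcal{O}(\Gamma_a)| \;=\; \begin{cases} 2D^2+2 & \text{if } a=e,\\ 2D^2+2D+1 & \text{if } a=o.\end{cases}
\end{align*}

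The main conceptual obstacle in this chain is not in the theorem itself but in the lemma it rests on, namely the matching of orbits with distance profiles: the fact that in a cycle (as opposed to a general $Q$-polynomial distance-regular graph) the distance profile $(\partial(y,z),\partial(x,z),\partial(x,y))$ is a complete invariant for the $G_a$-action. This is what makes $\chi_{\Omega}$ equal to some $P_{h,i,j}$ on the nose, and hence forces $\operatorname{Fix}(G_a)$ into $\Lambda_a$. Once that is granted, the theorem itself is a short bookkeeping argument with no further computation required.
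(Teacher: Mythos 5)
Your proof is correct and follows essentially the same route as the paper: both use (K3) for one inclusion, (K4) together with Lemma \ref{lem: each char vector is in the fundamental module} for the reverse inclusion, and then Lemma \ref{lem: number of orbits of Aut Gamma} for the dimension count. Your write-up merely makes explicit the double-containment structure that the paper's proof cites in compressed form.
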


\begin{proof}
	By (K3)--(K4) and Lemma \ref{lem: each char vector is in the fundamental module}, we have $\Lambda_a = \operatorname{Fix}(G_a)$. Consequently, we have $\operatorname{dim}(\Lambda_a) = |\mathcal{O}(\Gamma_a)|$. The remaining assertion follows from Lemma \ref{lem: number of orbits of Aut Gamma}.
\end{proof}

\begin{corollary}\label{cor: nonzero p h i j}
	With Assumption \ref{assump: assumptions}, $\{P_{h,i,j} \neq \mathbf{0}\ |\ h,i,j \in \{0,1,\ldots,D\}  \}$ is a basis of $\Lambda_a$. Moreover, for $h,i,j \in \{0,1,\ldots,D\}$, we have
	\begin{align*}
	A^{*(1)}P_{h,i,j} & = \theta_h^*P_{h,i,j},\\
	A^{*(2)}P_{h,i,j} & = \theta_i^*P_{h,i,j},\\
	A^{*(3)}P_{h,i,j} & = \theta_j^*P_{h,i,j}.
	\end{align*}
\end{corollary}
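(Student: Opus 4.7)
The plan splits cleanly into two independent pieces. For the basis claim, I would first invoke (I2) to place every nonzero $P_{h,i,j}$ inside $\Lambda_a$, and (I3) to get mutual orthogonality, hence linear independence, of the nonzero members of $\{P_{h,i,j}\mid h,i,j\in\{0,1,\ldots,D\}\}$. Lemma \ref{lem: number of nonzero P h i j} counts this nonzero set as $2D^2+2$ when $a=e$ and $2D^2+2D+1$ when $a=o$, and Theorem \ref{thm: basis of the fundamental module} identifies these two numbers with $\dim(\Lambda_a)$. A linearly independent subset of $\Lambda_a$ of size $\dim(\Lambda_a)$ must be a basis, so the first assertion follows.

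For the eigenvalue relations, the cleanest route is to read them off directly from Definition \ref{defn: the dual adjacency maps on tensor V} and Definition \ref{defn: the vectors P h i j}. The vector $P_{h,i,j}$ is by construction a sum of basis vectors $x\otimes y\otimes z$ with $\partial(y,z)=h$, $\partial(x,z)=i$, $\partial(x,y)=j$. On each such summand, $A^{*(1)}$ acts as multiplication by $\theta^*_{\partial(y,z)}=\theta_h^*$, so $A^{*(1)}P_{h,i,j}=\theta_h^* P_{h,i,j}$; the arguments for $A^{*(2)}$ and $A^{*(3)}$ are identical, yielding the factors $\theta_i^*$ and $\theta_j^*$, respectively. (If $P_{h,i,j}=\mathbf{0}$, the identities are trivial.) An equivalent, slightly more conceptual argument uses (I1): since each $E_k^{*(r)}$ is diagonal in the basis \eqref{eqn: orthonormal basis of tensor V}, the operators $A^{*(1)}$, $E_i^{*(2)}$, $E_j^{*(3)}$ mutually commute; pushing $A^{*(1)}$ past $E_i^{*(2)}E_j^{*(3)}$ and applying (H3) gives the same conclusion.

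I do not expect any real obstacle: the heavy lifting has already been done in the preceding sections. Lemma \ref{lem: number of nonzero P h i j} and Theorem \ref{thm: basis of the fundamental module} supply the dimension match that turns ``linearly independent'' into ``basis,'' and the eigenvalue statements reduce to inspecting one term of the defining sum of $P_{h,i,j}$. The only point worth being careful about is the bookkeeping that the \emph{nonzero} $P_{h,i,j}$ are exactly as many as $|\mathcal{O}(\Gamma_a)|$, which is already guaranteed by Definition \ref{defn:profile}, Corollary \ref{cor: distance profiles of orbits}, and (I4); no further combinatorics is required.
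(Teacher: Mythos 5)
Your proposal is correct and takes essentially the same approach as the paper: linear independence of the nonzero $P_{h,i,j}$ from (I2)--(I3), the dimension match via Lemma \ref{lem: number of nonzero P h i j} and Theorem \ref{thm: basis of the fundamental module}, and the eigenvalue relations read off directly from Definitions \ref{defn: the dual adjacency maps on tensor V} and \ref{defn: the vectors P h i j}.
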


\begin{proof}
	By (I2)--(I3), the set $\{P_{h,i,j} \neq \mathbf{0}\ |\ h,i,j \in \{0,1,\ldots,D\}\}$ is linearly independent in $\Lambda_a$. By Lemma \ref{lem: number of nonzero P h i j} and Theorem \ref{thm: basis of the fundamental module}, this set is a basis of $\Lambda_a$. The equations follow immediately from Definitions \ref{defn: the dual adjacency maps on tensor V} and \ref{defn: the vectors P h i j}.
\end{proof}

\begin{remark}\label{remk: Problem 12.5}\rm
		Recall the Terwilliger algebra of a $Q$-polynomial distance-regular graph with respect to a fixed vertex (see Subsection \ref{subsection: Terwilliger algebra of Gamma}). Let $\mathsf{T} = \mathsf{T}(x)$ denote the Terwilliger algebra of $\Gamma_a$ with respect to $x \in X_a$. By \cite[p. 206]{terwilligerSUBCONS}, we have
		\begin{align*}
		\operatorname{dim}(\mathsf{T}) & = \left\{\begin{array}{ll}
		\textstyle  2D^2+2 & \text{if } a = e,\\
		\textstyle  2D^2+2D+1 & \text{if } a=o.
		\end{array} \right.
		\end{align*}
		By Theorem \ref{thm: basis of the fundamental module}, there exists a vector space isomorphism from $\mathsf{T}$ to the fundamental $\mathbb{T}$-module $\Lambda_a$. This vector space isomorphism does not exist for general $Q$-polynomial distance-regular graphs (see \cite[Problem 12.5]{terwilligerS3} and the comments before it).
\end{remark}

\section{The action of the generators of $\mathbb{T}$ on $\chi_{\Omega}$}

Recall that $\mathbb{T}$ is generated by $A_1, A_2, A_3, A_1^*, A_2^*, A_3^*$. With Assumption \ref{assump: assumptions}, the generators $A_1^*, A_2^*, A_3^*$ act on $V_a^{\otimes 3}$ as $A^{*(1)}, A^{*(2)}, A^{*(3)}$, respectively while generators $A_1, A_2, A_3$ act on $V_a^{\otimes 3}$ as $A^{(1)}, A^{(2)}, A^{(3)}$, respectively (see Theorem \ref{thm: S3-symmetric tridiagonal algebra acting on tensor power}). In this section, we investigate the action of $A^{*(1)}, A^{*(2)}, A^{*(3)}, A^{(1)}, A^{(2)}, A^{(3)}$ on $\chi_{\Omega}$ for $\Omega \in \mathcal{O}(\Gamma_a)$. 

\begin{lemma}\label{lem: action of A*'s on Omega a x 0 0}
	With Assumption \ref{assump: assumptions}, let $(x,0,0) \in O_a$. Then
	\begin{align}
	A^{*(1)} \chi_{\Omega_a(x,0,0)} & = \theta_0^* \chi_{\Omega_a(x,0,0)}, \label{eqn: action of A* on x 0 0 first}\\
	A^{*(2)} \chi_{\Omega_a(x,0,0)} & = \theta_{x}^*\chi_{\Omega_a(x,0,0)},\\
	A^{*(3)} \chi_{\Omega_a(x,0,0)} & = \theta_{x}^*\chi_{\Omega_a(x,0,0)}. \label{eqn: action of A* on x 0 0 last}
	\end{align}
\end{lemma}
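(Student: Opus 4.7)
The plan is to reduce the claim to the eigenvalue equations already established in Corollary~\ref{cor: nonzero p h i j} by identifying $\chi_{\Omega_a(x,0,0)}$ with a specific $P_{h,i,j}$ vector. The key observation is that the triple $(x,0,0)$ with $0 \le x \le D$ has a very simple distance profile: using Definition~\ref{defn:profile}, we read off $\partial(0,0)=0$, $\partial(x,0)=x$, and $\partial(x,0)=x$ (the middle equality using $x \le D$ so that the cyclic distance equals $x$). Hence the distance profile of $(x,0,0)$ is $(0,x,x)$.

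Next, I would invoke Corollary~\ref{cor: distance profiles of orbits}, which asserts that an orbit in $\mathcal{O}(\Gamma_a)$ is determined by the common distance profile of its triples. Therefore
\begin{align*}
\Omega_a(x,0,0) &= \{(u,v,w) \in X_a \times X_a \times X_a : \partial(v,w) = 0,\ \partial(u,w) = x,\ \partial(u,v) = x\}.
\end{align*}
Comparing with Definition~\ref{defn: the vectors P h i j}, the right-hand side is precisely the index set appearing in the definition of $P_{0,x,x}$, so
\begin{align*}
\chi_{\Omega_a(x,0,0)} &= \sum_{(u,v,w) \in \Omega_a(x,0,0)} u \otimes v \otimes w = P_{0,x,x}.
\end{align*}

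Finally, I would apply Corollary~\ref{cor: nonzero p h i j} directly to $P_{0,x,x}$, which yields
\begin{align*}
A^{*(1)}P_{0,x,x} &= \theta_0^* P_{0,x,x}, \qquad A^{*(2)}P_{0,x,x} = \theta_x^* P_{0,x,x}, \qquad A^{*(3)}P_{0,x,x} = \theta_x^* P_{0,x,x},
\end{align*}
and these are exactly the three equations \eqref{eqn: action of A* on x 0 0 first}--\eqref{eqn: action of A* on x 0 0 last} after substituting $\chi_{\Omega_a(x,0,0)} = P_{0,x,x}$. There is no real obstacle here; the only subtle point is making sure the range $0 \le x \le D$ (which is guaranteed by $(x,0,0) \in O_a$) is used to conclude that $\partial(x,0)=x$ in the cyclic metric, so that the identification with $P_{0,x,x}$ is correct on the nose.
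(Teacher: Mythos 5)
Your proof is correct and follows essentially the same route as the paper: identify $\chi_{\Omega_a(x,0,0)}$ with $P_{0,x,x}$ via the distance profile $(0,x,x)$, then apply Corollary~\ref{cor: nonzero p h i j}. The paper's proof is terser but identical in substance; your explicit appeal to Corollary~\ref{cor: distance profiles of orbits} to justify that the orbit consists of \emph{all} triples with that profile is a reasonable (and welcome) elaboration of a step the paper leaves implicit.
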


\begin{proof}
	Since $(x,0,0) \in \Omega_a(x,0,0)$, each triple in $\Omega_a(x,0,0)$ has distance profile $(0,x,x)$. In particular, $\chi_{\Omega_a(x,0,0)} = P_{0,x,x}$. By Corollary \ref{cor: nonzero p h i j}, \eqref{eqn: action of A* on x 0 0 first}--\eqref{eqn: action of A* on x 0 0 last} hold.
\end{proof}

\begin{lemma}\label{lem: action of A*'s on Omega a x y 0}
	With Assumption \ref{assump: assumptions}, let $(x,y,0) \in O_a$ with $1 \leq y \leq D$. Then
	\begin{align}
	A^{*(1)} \chi_{\Omega_a(x,y,0)} & = \theta_y^* \chi_{\Omega_a(x,y,0)}, \label{eqn: 11}\\
	A^{*(2)} \chi_{\Omega_a(x,y,0)} & = \left\{\begin{array}{ll}
	\theta_x^*\chi_{\Omega_a(x,y,0)} & \text{if } 0 \leq x \leq D,\\
	\theta_{|X_a|-x}^*\chi_{\Omega_a(x,y,0)} & \text{if } D+1 \leq x \leq |X_a|-1,
	\end{array} \right.\\
	A^{*(3)} \chi_{\Omega_a(x,y,0)} & = \left\{ \begin{array}{ll}
	\theta_{|x-y|}^* \chi_{\Omega_a(x,y,0)} & \text{ if } 0 \leq x \leq D,\\
	\theta_{\operatorname{min}\{x-y,|X_a|-x+y\}}^* \chi_{\Omega_a(x,y,0)} & \text{ if } D+1 \leq x \leq |X_a|-1.
	\end{array}\right. \label{eqn: 13}
	\end{align}
\end{lemma}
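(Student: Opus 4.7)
My plan is to mimic the proof of Lemma \ref{lem: action of A*'s on Omega a x 0 0}: identify $\chi_{\Omega_a(x,y,0)}$ with a specific vector $P_{h,i,j}$ via the distance profile of the representative triple $(x,y,0)$, and then invoke Corollary \ref{cor: nonzero p h i j} to read off the three $A^{*(r)}$-eigenvalues.

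First I would compute the distance profile $(h,i,j) = (\partial(y,0), \partial(x,0), \partial(x,y))$ of the triple $(x,y,0)$. Since the hypothesis gives $1 \leq y \leq D$, the cycle distance from $y$ to $0$ is $y$, so $h=y$, which immediately accounts for \eqref{eqn: 11}. For $i = \partial(x,0) = \min\{x, |X_a|-x\}$, the value is $x$ when $0 \leq x \leq D$ and $|X_a|-x$ when $D+1 \leq x \leq |X_a|-1$, matching the two branches of the $A^{*(2)}$-formula. For $j = \partial(x,y)$: when $0 \leq x \leq D$, both $x$ and $y$ lie in $\{0,\ldots,D\}$, so $|x-y| \leq D$ is already the shortest walk and $j = |x-y|$; when $D+1 \leq x \leq |X_a|-1$ we have $x-y>0$, and the two walks around the cycle have lengths $x-y$ and $|X_a|-x+y$, giving $j = \min\{x-y,\,|X_a|-x+y\}$.

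Next, by Corollary \ref{cor: distance profiles of orbits} every triple in the orbit $\Omega_a(x,y,0)$ shares the distance profile $(h,i,j)$ computed above, so by Definition \ref{defn: the vectors P h i j} we have $\chi_{\Omega_a(x,y,0)} = P_{h,i,j}$. Corollary \ref{cor: nonzero p h i j} then yields
\[
A^{*(1)}P_{h,i,j} = \theta_h^*\,P_{h,i,j}, \qquad A^{*(2)}P_{h,i,j} = \theta_i^*\,P_{h,i,j}, \qquad A^{*(3)}P_{h,i,j} = \theta_j^*\,P_{h,i,j},
\]
which are precisely \eqref{eqn: 11}--\eqref{eqn: 13}.

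I do not expect any real obstacle: the argument is the same two-step template used for the previous lemma, and the only new content is the elementary case analysis of cycle distances. The one piece of bookkeeping worth checking explicitly is consistency at the boundaries (e.g.\ $x=D$, or the transition near $x=|X_a|-1$ back to $0$), where both piecewise branches evaluate to the same $\theta^*$-eigenvalue, so the formulas in the statement are well-defined regardless of which branch one chooses.
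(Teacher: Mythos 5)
Your proposal is correct and follows essentially the same route as the paper: compute the distance profile of the representative triple $(x,y,0)$ in the two cases $0 \leq x \leq D$ and $D+1 \leq x \leq |X_a|-1$, identify $\chi_{\Omega_a(x,y,0)}$ with the corresponding $P_{h,i,j}$, and conclude via Corollary \ref{cor: nonzero p h i j}. The only nitpick is that the equality $\chi_{\Omega_a(x,y,0)} = P_{h,i,j}$ uses Corollary \ref{cor: distance profiles of orbits} to ensure the orbit contains \emph{all} triples with that profile (the fact that triples within one orbit share a profile is just distance-preservation of automorphisms), but this matches the paper's own implicit use of that corollary.
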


\begin{proof}
	Note that $(x,y,0) \in \Omega_a(x,y,0)$. If $0 \leq x \leq D$, then $\Omega_a(x,y,0)$ has distance profile $(y,x,|x-y|)$ and $\chi_{\Omega_a(x,y,0)} = P_{y,x,|x-y|}$. If $D+1 \leq x \leq |X_a|-1$, then $\Omega_a(x,y,0)$ has distance profile $(r,s,t)$ and $\chi_{\Omega_a(x,y,0)} = P_{r,s,t}$ where $$(r,s,t) = (y,|X_a|-x,\operatorname{min}\{x-y,|X_a|-x+y\}).$$ By Corollary \ref{cor: nonzero p h i j}, \eqref{eqn: 11}--\eqref{eqn: 13} hold.
\end{proof}

\begin{lemma}\label{lem: number of elements in each orbit}
	With Assumption \ref{assump: assumptions}, let $(x,y,0) \in O_a$. Then the following hold.
	\begin{enumerate}[(i)]
		\item If $a = e$, then 
		\begin{align}
		|\Omega_e(x,y,0)| & = \left\{\begin{array}{ll}
		|X_e| & \text{if } x,y \in \{0,D\},\\
		2|X_e| & \text{otherwise}. 
		\end{array} \right.
		\end{align}
		\item If $a = o$, then 
		\begin{align}
		|\Omega_o(x,y,0)| & = \left\{\begin{array}{ll}
		|X_o| & \text{if } x=0 \text{ and } y=0,\\
		2|X_o| & \text{otherwise}.
		\end{array} \right.
		\end{align}
	\end{enumerate}
\end{lemma}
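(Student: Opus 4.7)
The plan is to apply the orbit--stabilizer theorem. Since $G_a = \operatorname{Aut}(\Gamma_a)$ is the dihedral group of order $2|X_a|$, for each $(x,y,0) \in O_a$ one has
\begin{equation*}
|\Omega_a(x,y,0)| \;=\; \frac{|G_a|}{|\operatorname{Stab}_{G_a}(x,y,0)|} \;=\; \frac{2|X_a|}{|\operatorname{Stab}_{G_a}(x,y,0)|},
\end{equation*}
so it suffices to determine the stabilizer of the triple $(x,y,0)$.

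First I would observe that every nonidentity rotation in $G_a'$ acts fixed-point-freely on $X_a$; in particular no nontrivial rotation stabilizes any triple containing the vertex $0$. Next, if two distinct reflections were both to lie in $\operatorname{Stab}_{G_a}(x,y,0)$, their product would be a nontrivial rotation also lying in the stabilizer, contradicting the previous observation. Hence $\operatorname{Stab}_{G_a}(x,y,0)$ is either trivial or of the form $\{e,r\}$ for a unique reflection $r \in G_a$ that fixes each of $x$, $y$, $0$.

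The remaining work is to identify which reflections fix $0$ and to determine their fixed-point sets. For $a = e$ (so $|X_e| = 2D$), the dihedral group contains $D$ reflections through pairs of antipodal vertices and $D$ reflections through midpoints of antipodal edges; of these, the only one fixing $0$ is the involution $r \colon k \mapsto -k \pmod{2D}$, whose fixed-point set is $\{0,D\}$. Therefore $r$ stabilizes $(x,y,0)$ iff $x,y \in \{0,D\}$, which yields $|\operatorname{Stab}| = 2$ and $|\Omega_e(x,y,0)| = |X_e|$ in that case, and $|\operatorname{Stab}| = 1$ with $|\Omega_e(x,y,0)| = 2|X_e|$ otherwise, giving (i). For $a = o$ (so $|X_o| = 2D+1$), every reflection fixes exactly one vertex; the unique reflection fixing $0$ is $r \colon k \mapsto -k \pmod{2D+1}$, and the congruence $2y \equiv 0 \pmod{2D+1}$ forces $y = 0$. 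Hence $r$ stabilizes $(x,y,0)$ iff $x = y = 0$, producing $|\Omega_o(0,0,0)| = |X_o|$ and $|\Omega_o(x,y,0)| = 2|X_o|$ in all remaining cases, which is (ii).

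The argument is essentially elementary group theory; the only point requiring a small amount of care is the bookkeeping of which reflections fix the basepoint $0$ in the two parity cases, and verifying in the odd case that the fixed-point set of such a reflection reduces to $\{0\}$ alone. I do not anticipate any serious obstacle beyond these routine verifications.
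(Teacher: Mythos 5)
Your proof is correct, and it arrives at exactly the right dichotomy in both parity cases; the route, however, differs in packaging from the paper's. The paper never invokes the orbit--stabilizer theorem: it writes $\Omega_a(x,y,0)$ explicitly as the union of two rotation-orbits,
\begin{align*}
\{\,g(x,y,0)\ |\ g \in G'_a\,\} \;\cup\; \{\,g(|X_a|-x,\,|X_a|-y,\,0)\ |\ g \in G'_a\,\},
\end{align*}
which is just the coset decomposition of the dihedral group into rotations and reflected rotations. Since no nontrivial rotation fixes the vertex $0$, each of these rotation-orbits has size exactly $|G'_a| = |X_a|$, and the count reduces to deciding whether the two sets coincide or are disjoint, i.e., whether $(x,y,0)$ equals its mirror image $(|X_a|-x,|X_a|-y,0)$. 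That condition is precisely $x,y \in \{0,D\}$ when $a=e$ and $x=y=0$ when $a=o$. Your stabilizer computation encodes the same dichotomy from the dual side: the stabilizer is nontrivial exactly when the unique reflection fixing $0$, namely $k \mapsto -k$, also fixes $x$ and $y$, and your analysis of its fixed-point set ($\{0,D\}$ for even $|X_a|$, $\{0\}$ for odd) matches the paper's condition. Both arguments ultimately rest on the same two facts --- rotations act freely on triples containing $0$, and the reflection through $0$ governs the identification --- so they are of comparable length and difficulty; yours is the more systematic general-group-theory formulation (at the cost of classifying vertex versus edge-midpoint reflections), while the paper's direct count avoids any discussion of stabilizers altogether.
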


\begin{proof}
	Let $(x,y,0) \in O_e$. Observe that $\Omega_e(x,y,0)$ is equal to $$\{g(x,y,0)\ |\ g\in G'_e \} \cup \{g(2D-x,2D-y,0)\ |\ g \in G'_e \}.$$ If $x,y \in \{0,D\}$, then the triple $(x,y,0)$ is the same as $(2D-x,2D-y,0)$ since $2D$ is treated as vertex $0$. Hence, $|\Omega_e(x,y,0)| = |G'_e| = |X_e|$. Otherwise, there is no $g \in G'_e$ satisfying $g(x,y,0) = (2D-x,2D-y,0)$. Consequently, $|\Omega_e(x,y,0)| = 2|G'_e| = 2|X_e|$. This proves (i). We prove (ii) using a similar argument.
\end{proof}

\begin{lemma}\label{lem: properties of the map phi}
	With Assumption \ref{assump: assumptions}, let $(x,y,0) \in O_a$. Assume $\phi$ is the map in Corollary \ref{cor: the map phi}. Then the following hold.
	\begin{enumerate}[(i)]
		\item If $|\Omega_a(x,y,0)| = |X_a|$, then
		\begin{align}
		(x-1,y,0)^\phi & = (x+1,y,0)^\phi, \label{eqn: equality of x-1 y 0}\\
		(x,y-1,0)^\phi & = (x,y+1,0)^\phi, \label{eqn: equality of x y-1 0}\\
		(x,y,|X_a|-1)^\phi & = (x,y,1)^\phi. \label{eqn: equality of x y 1}
		\end{align}
		\item If $|\Omega_a(x,y,0)| = 2|X_a|$, then each of the equations \eqref{eqn: equality of x-1 y 0}--\eqref{eqn: equality of x y 1} does not hold.
	\end{enumerate}
\end{lemma}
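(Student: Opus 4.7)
The plan is to reduce both parts to a purely orbit-theoretic question via Corollary \ref{cor: the map phi}: two triples share the same $\phi$-image if and only if they lie in the same $G_a$-orbit. Thus each equality in (i) and each non-equality in (ii) becomes a concrete question about whether a given pair of triples is related by an element of the dihedral group $G_a$.

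For part (i), I plan to exhibit a single witness. Let $\sigma_0 \in G_a$ denote the reflection $k \mapsto -k \pmod{|X_a|}$. By Lemma \ref{lem: number of elements in each orbit}, the hypothesis $|\Omega_a(x,y,0)| = |X_a|$ is equivalent to $2x \equiv 2y \equiv 0 \pmod{|X_a|}$, which forces $\sigma_0$ to fix each of $x$, $y$, and $0$. A short calculation then yields $\sigma_0(x-1,y,0) = (x+1,y,0)$, $\sigma_0(x,y-1,0) = (x,y+1,0)$, and $\sigma_0(x,y,|X_a|-1) = (x,y,1)$, so all three orbit-equivalences asserted in (i) follow.

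For part (ii), I would argue by contradiction: suppose some $g \in G_a$ sent one triple to its partner. For equations (\ref{eqn: equality of x-1 y 0}) and (\ref{eqn: equality of x y-1 0}), $g$ must fix $0$; in the dihedral group the stabilizer of $0$ is $\{\operatorname{id}, \sigma_0\}$. The identity is immediately ruled out, and requiring $g = \sigma_0$ to additionally send $x-1$ to $x+1$ (and fix $y$) forces $2x \equiv 2y \equiv 0 \pmod{|X_a|}$, returning $(x,y,0)$ to the small-orbit regime by Lemma \ref{lem: number of elements in each orbit} and contradicting the hypothesis.

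The delicate case is equation (\ref{eqn: equality of x y 1}), where $g$ need only fix $x$ and $y$, not $0$. The main obstacle is to enumerate all $g \in G_a$ fixing two prescribed vertices. In the odd cycle every non-trivial reflection has a unique fixed vertex and every non-trivial rotation is fixed-point-free, so $g$ is the identity or (if $x=y$) the reflection through $x$; the identity forces $|X_a|=2$ and the reflection combined with $g(|X_a|-1) = 1$ forces $x=0$, both of which collapse into the small-orbit regime. In the even cycle, a reflection can fix two antipodal vertices, so I would split additionally on the subcases $x=y$ and $y = x+D$; in every subcase the extra constraint $g(|X_a|-1) = 1$ yields $2x \equiv 0 \pmod{|X_a|}$, again contradicting the large-orbit hypothesis.
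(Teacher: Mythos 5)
Your proposal is correct, and it takes a genuinely different route from the paper. The paper's proof stays with distance profiles: using Lemma \ref{lem: number of elements in each orbit} it pins down the small-orbit case ($x,y \in \{0,D\}$ when $a=e$; $x=y=0$ when $a=o$), and then simply asserts that the relevant triples have equal (for (i)) or distinct (for (ii)) distance profiles, invoking Corollary \ref{cor: the map phi} and leaving the distance computations to the reader. You instead convert $\phi$-image equality into orbit equivalence (legitimate, by combining Corollary \ref{cor: the map phi} with Corollary \ref{cor: distance profiles of orbits}) and then work entirely inside the dihedral group: part (i) via the single explicit reflection $\sigma_0: k \mapsto -k$, whose applicability is exactly the condition $2x \equiv 2y \equiv 0 \pmod{|X_a|}$ -- a nice uniform reformulation of the small-orbit criterion covering both parities -- and part (ii) by enumerating the stabilizer of $0$ (namely $\{\operatorname{id},\sigma_0\}$) for the first two equations, and the elements fixing both $x$ and $y$ for the third. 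What your approach buys is that no distances ever need to be computed, only modular arithmetic, and it supplies in full the case analysis that the paper's terse ``by comparing distance profiles'' leaves implicit, notably the genuinely delicate case \eqref{eqn: equality of x y 1} where the hypothetical automorphism need not fix the vertex $0$ and the even/odd fixed-point structure of reflections must be distinguished. What the paper's approach buys is brevity and freedom from group-theoretic bookkeeping. One cosmetic imprecision: in your treatment of \eqref{eqn: equality of x y 1}, the identity subcase forces $|X_a|=2$, which contradicts $D \geq 2$ outright rather than ``collapsing into the small-orbit regime''; the contradiction stands either way.
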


\begin{proof}
	We prove (i). Assume $|\Omega_a(x,y,0)| = |X_a|$. If $a = e$, then $x,y \in \{0,D\}$. If $a = o$, then $x = y = 0$. By this, \eqref{eqn: equality of x-1 y 0}--\eqref{eqn: equality of x y 1} hold by comparing distance profiles. We prove (ii). Assume $|\Omega_a(x,y,0)| = 2|X_a|$. If $a = e$, then either $1 \leq x \leq D-1$ or $1 \leq y \leq D-1$. If $a = o$, then either $1 \leq x \leq D$ or $1 \leq y \leq D$. By this, each of the equations \eqref{eqn: equality of x-1 y 0}--\eqref{eqn: equality of x y 1} does not hold since they have different distance profiles.
\end{proof}

\begin{lemma}\label{lem: action of the adjacency map on corner points} With Assumption \ref{assump: assumptions}, let $(x,y,0) \in O_a$ such that $|\Omega_a(x,y,0)| = |X_a|$. Then
		\begin{align*}
		A^{(1)}\chi_{\Omega_a(x,y,0)} & = \chi_{\Omega_a(x-1,y,0)^\phi} = \chi_{\Omega_a(x+1,y,0)^\phi},\\
		A^{(2)}\chi_{\Omega_a(x,y,0)} & = \chi_{\Omega_a(x,y-1,0)^\phi} = \chi_{\Omega_a(x,y+1,0)^\phi},\\
		A^{(3)}\chi_{\Omega_a(x,y,0)} & = \chi_{\Omega_a(x,y,|X_a|-1)^\phi} = \chi_{\Omega_a(x,y,1)^\phi}.
		\end{align*}
\end{lemma}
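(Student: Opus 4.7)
The plan is to expand the action of each generator on $\chi_{\Omega_a(x,y,0)}$ in the basis \eqref{eqn: orthonormal basis of tensor V} and then identify the resulting sum as the characteristic vector of the target orbit. I carry out the case $r = 1$ in detail; the cases $r = 2, 3$ are analogous, obtained by shifting the second or third tensor factor and invoking the corresponding equality of Lemma \ref{lem: properties of the map phi}(i). By Theorem \ref{thm: S3-symmetric tridiagonal algebra acting on tensor power} and Definition \ref{defn: the adjacency maps on tensor V}, $A^{(1)}$ acts as $A_1 \otimes I \otimes I$, and combining \eqref{eqn: action of A1 on each basis vector} with \eqref{eqn: neighbors of z} gives $A_1 u = (u-1) + (u+1)$ for every $u \in X_a$. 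Hence
\begin{align*}
A^{(1)} \chi_{\Omega_a(x,y,0)} = \sum_{(u,v,w) \in \Omega_a(x,y,0)} \bigl[(u-1) \otimes v \otimes w + (u+1) \otimes v \otimes w\bigr].
\end{align*}

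Let $\Omega' := \Omega_a((x-1, y, 0)^\phi) = \Omega_a((x+1, y, 0)^\phi)$, the equality coming from Lemma \ref{lem: properties of the map phi}(i). The heart of the argument is to show that the multiset $S := \{(u-1, v, w), (u+1, v, w)\ |\ (u, v, w) \in \Omega_a(x, y, 0)\}$ coincides with $\Omega'$, each element appearing exactly once. For the containment $S \subseteq \Omega'$, I would write $(u, v, w) = g(x, y, 0)$ for some $g \in G_a$: when $g$ is a rotation by $k$, $g(x \pm 1) = g(x) \pm 1 = u \pm 1$, so $(u \pm 1, v, w) = g(x \pm 1, y, 0) \in \Omega_a(x \pm 1, y, 0) = \Omega'$; when $g$ is a reflection of the form $j \mapsto -j + c$, $g(x \pm 1) = g(x) \mp 1 = u \mp 1$, so $(u \pm 1, v, w) = g(x \mp 1, y, 0) \in \Omega_a(x \mp 1, y, 0) = \Omega'$. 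For the cardinality count, the hypothesis $|\Omega_a(x, y, 0)| = |X_a|$ forces $\Omega_a(x, y, 0) = \{g(x, y, 0)\ |\ g \in G'_a\}$, so each element has the form $(x + k, y + k, k)$ for a unique $k \in X_a$ and is determined by its third coordinate. Consequently, the maps $(u, v, w) \mapsto (u \pm 1, v, w)$ are injective on $\Omega_a(x, y, 0)$ with disjoint images: a coincidence $(u-1, v, w) = (u'+1, v', w')$ inside $S$ would give $w = w'$, then $(u, v, w) = (u', v', w')$, and then $u - 1 = u + 1$, impossible as $|X_a| \geq 4$.

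Therefore $S$ is a set of $2|X_a|$ distinct elements of $\Omega'$. Since orbit sizes lie in $\{|X_a|, 2|X_a|\}$ by Lemma \ref{lem: number of elements in each orbit}, this forces $|\Omega'| = 2|X_a|$ and $S = \Omega'$, so the displayed sum equals $\chi_{\Omega'}$, proving $A^{(1)} \chi_{\Omega_a(x,y,0)} = \chi_{\Omega'}$. The main subtlety is the rotation/reflection dichotomy, which swaps the two shifts: the corner hypothesis, combined with Lemma \ref{lem: properties of the map phi}(i), is precisely what makes this swap harmless by placing $(x-1, y, 0)$ and $(x+1, y, 0)$ in a common orbit.
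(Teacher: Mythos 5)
Your proof is correct, and its core is the same computation as the paper's: apply $A^{(1)}$, shift the first coordinate by $\pm 1$, and use Lemma \ref{lem: properties of the map phi}(i) to see that the two shifted families land in a single orbit. The difference lies in how the resulting sum is identified with $\chi_{\Omega_a(x-1,y,0)^\phi}$. The paper first writes $\chi_{\Omega_a(x,y,0)} = \sum_{g \in G'_a} g(x \otimes y \otimes 0)$ (possible exactly because of the corner hypothesis), then uses (K1) to commute $A^{(1)}$ past each rotation, obtaining $\sum_{g \in G'_a} g((x-1)\otimes y \otimes 0) + \sum_{g \in G'_a} g((x+1)\otimes y \otimes 0)$, and recognizes this as $\chi_{\Omega_a(x-1,y,0)^\phi}$ because that orbit is precisely the disjoint union of the two rotation orbits (the structure exhibited in the proof of Lemma \ref{lem: number of elements in each orbit}). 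You instead expand over orbit elements and prove the multiset identity $S = \Omega'$ directly: containment via the rotation/reflection dichotomy, multiplicity one via the observation that elements of a corner orbit are determined by their third coordinate, and equality via the cardinality bound of Lemma \ref{lem: number of elements in each orbit}. Your version is more self-contained --- it bypasses (K1) and makes explicit what the paper leaves implicit, namely that every coefficient in the expansion equals $1$ --- at the cost of a longer counting argument; the paper's commutation trick buys a shorter derivation whose bookkeeping also carries over directly to the non-corner case (Lemma \ref{lem: action of the adjacency map on non-corner points}), where your multiset approach would additionally have to track the coefficient $2$ arising when a shifted orbit is a corner orbit.
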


\begin{proof}
	By assumption on $(x,y,0)$, we have $\chi_{\Omega_a(x,y,0)} = \textstyle \sum_{g \in G'_a} g(x \otimes y \otimes 0)$. By (K1), Definition \ref{defn: the adjacency maps on tensor V}, \eqref{eqn: action of A1 on each basis vector}, and \eqref{eqn: neighbors of z}, we have
	\begin{align*}
	A^{(1)}\chi_{\Omega_a(x,y,0)} & = \textstyle \sum_{g \in G'_a}g((x-1)\otimes y \otimes 0) + \sum_{g \in G'_a}g((x+1)\otimes y \otimes 0)\\
	& = \chi_{\Omega_a(x-1,y,0)^\phi}\\ 
	& = \chi_{\Omega_a(x+1,y,0)^\phi},
	\end{align*}
	where the last line is due to Lemma \ref{lem: properties of the map phi}(i). This proves the first equation. The remaining equations are proven similarly.
\end{proof}

\begin{lemma}\label{lem: action of the adjacency map on non-corner points}
	With Assumption \ref{assump: assumptions}, let $(x,y,0) \in O_a$ such that $|\Omega_a(x,y,0)| = 2|X_a|$. Then
		\begin{align*}
		A^{(1)}\chi_{\Omega_a(x,y,0)} & = \textstyle \frac{2|X_a|}{|\Omega_a(x-1,y,0)^\phi|}\chi_{\Omega_a(x-1,y,0)^\phi} + \frac{2|X_a|}{|\Omega_a(x+1,y,0)^\phi|} \chi_{\Omega_a(x+1,y,0)^\phi},\\
		A^{(2)}\chi_{\Omega_a(x,y,0)} & = \textstyle \frac{2|X_a|}{|\Omega_a(x,y-1,0)^\phi|}\chi_{\Omega_a(x,y-1,0)^\phi} + \frac{2|X_a|}{|\Omega_a(x,y+1,0)^\phi|} \chi_{\Omega_a(x,y+1,0)^\phi},\\
		A^{(3)}\chi_{\Omega_a(x,y,0)} & = \textstyle \frac{2|X_a|}{|\Omega_a(x,y,|X_a|-1)^\phi|}\chi_{\Omega_a(x,y,|X_a|-1)^\phi} + \frac{2|X_a|}{|\Omega_a(x,y,1)^\phi|} \chi_{\Omega_a(x,y,1)^\phi}.
		\end{align*}
\end{lemma}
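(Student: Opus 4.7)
The plan is to adapt the argument of Lemma~\ref{lem: action of the adjacency map on corner points}, but to account for the fact that a $G_a$-orbit of size $2|X_a|$ decomposes into two distinct $G_a'$-orbits. Let $r \in G_a$ denote the reflection $z \mapsto -z$, which fixes the vertex $0$. Since $|\Omega_a(x,y,0)| = 2|G_a'|$, the $G_a$-orbit splits as the disjoint union of the $G_a'$-orbit of $(x,y,0)$ and the $G_a'$-orbit of $r \cdot (x,y,0) = (-x, -y, 0)$. Consequently,
\begin{align*}
\chi_{\Omega_a(x,y,0)} = \sum_{g \in G_a'} g(x \otimes y \otimes 0) + \sum_{g \in G_a'} g(r(x) \otimes r(y) \otimes 0).
\end{align*}

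Next, I would apply $A^{(1)}$ to each term using (K1), Definition~\ref{defn: the adjacency maps on tensor V}, \eqref{eqn: action of A1 on each basis vector}, and \eqref{eqn: neighbors of z}, which produces four $G_a'$-orbit sums indexed by the choices of sign $\pm$ in each summand. Using the key identity $r(u) \pm 1 = r(u \mp 1)$ to rewrite the shifted reflected terms, these four sums regroup into two pairs: one pair consists of the $G_a'$-orbit of $(x-1, y, 0)$ and the $G_a'$-orbit of $r(x-1, y, 0)$, both lying in $\Omega_a(x-1, y, 0)^\phi$; the other pair consists of the analogous orbits lying in $\Omega_a(x+1, y, 0)^\phi$.

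For each target orbit, I would perform a case analysis using Lemma~\ref{lem: number of elements in each orbit}. If $|\Omega_a(x \pm 1, y, 0)^\phi| = |X_a|$, then its $G_a$-orbit coincides with its $G_a'$-orbit, so both $G_a'$-orbit sums equal the full characteristic vector $\chi_{\Omega_a(x\pm 1, y, 0)^\phi}$, and their combined contribution is $2\chi_{\Omega_a(x\pm 1, y, 0)^\phi}$, matching the coefficient $2|X_a|/|X_a| = 2$. If $|\Omega_a(x \pm 1, y, 0)^\phi| = 2|X_a|$, the two $G_a'$-orbit sums are disjoint $G_a'$-orbits whose union is the full $G_a$-orbit, so their combined contribution is $\chi_{\Omega_a(x\pm 1, y, 0)^\phi}$, matching the coefficient $2|X_a|/(2|X_a|) = 1$. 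The equations for $A^{(2)}$ and $A^{(3)}$ follow by analogous arguments in which one shifts the second and third coordinates instead of the first.

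The main technical point is the orbit bookkeeping: tracking which $G_a'$-orbit each shifted triple falls into. This hinges on the identity $r(u) \pm 1 = r(u \mp 1)$, which shows that reflection interchanges the $+1$ and $-1$ shifts on any single coordinate, so the contributions from the reflected $G_a'$-orbit are precisely the ones needed to either double or complete each target orbit's characteristic vector.
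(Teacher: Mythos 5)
Your proposal is correct and follows essentially the same route as the paper: decompose $\chi_{\Omega_a(x,y,0)}$ into the two $G_a'$-orbit sums of $(x,y,0)$ and its reflection $(|X_a|-x,|X_a|-y,0)$, apply $A^{(r)}$ to obtain four $G_a'$-orbit sums, regroup them in pairs via the identity $|X_a|-x\pm 1 = |X_a|-(x\mp 1)$, and then split into cases according to whether the target orbit has size $|X_a|$ or $2|X_a|$ to recover the coefficients $2$ and $1$. The only cosmetic difference is that you name the reflection $r:z\mapsto -z$ explicitly, whereas the paper writes the reflected triples directly; the bookkeeping is identical.
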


\begin{proof}
	By assumption on $(x,y,0)$, we see that $\chi_{\Omega_a(x,y,0)}$ is equal to $$\textstyle \sum_{g \in G'_a} g(x \otimes y \otimes 0)+\sum_{g \in G'_a} g((|X_a|-x) \otimes (|X_a|-y) \otimes 0).$$ 
	By (K1), Definition \ref{defn: the adjacency maps on tensor V}, \eqref{eqn: action of A1 on each basis vector}, and \eqref{eqn: neighbors of z}, we have
	\begin{align}
	& A^{(1)} \chi_{\Omega_a(x,y,0)} \notag \\  
	& = \textstyle \sum_{g \in G'_a}  g((x-1)\otimes y \otimes 0) + \sum_{g \in G'_a}  g((|X_a|-x+1)\otimes (|X_a|-y) \otimes 0) \label{eqn: x-1, y, 0}\\
	& \quad + \textstyle \sum_{g \in G'_a}g((x+1)\otimes y \otimes 0) + \textstyle \sum_{g \in G'_a}g((|X_a|-x-1)\otimes (|X_a|-y) \otimes 0). \label{eqn: x+1, y, 0}
	\end{align}
	If $|\Omega_a(x-1,y,0)^\phi| = 2|X_a|$ (resp. $|\Omega_a(x-1,y,0)^\phi| = |X_a|$), we find that \eqref{eqn: x-1, y, 0} is equal to $\chi_{\Omega_a(x-1,y,0)^\phi}$ (resp. $2\chi_{\Omega_a(x-1,y,0)^\phi}$). Similarly, \eqref{eqn: x+1, y, 0} is equal to either $\chi_{\Omega_a(x+1,y,0)^\phi}$ or $2\chi_{\Omega_a(x+1,y,0)^\phi}$. This gives the first equation. We prove the other equations similarly.
\end{proof}

We illustrate the action of $A^{(1)}, A^{(2)}, A^{(3)}$ on $\chi_{\Omega}$ for cycles with few vertices.

\begin{example}\rm 
	Let $D = 2$ and consider the cycle $\Gamma_e=(X_e,R_e)$ on four vertices. Then we have $\operatorname{dim}(\Lambda_e) = 10$. Furthermore, the basis $\{\chi_{\Omega}\ |\ \Omega \in \mathcal{O}(\Gamma_e) \}$ can be identified with the following 10 points on the plane where each point corresponds to a unique $\chi_{\Omega}$. 
		\begin{center}
		\begin{tikzpicture}
		\tikzset{mynode/.style={circle, draw, fill=black, minimum size=4pt, inner sep=0pt}}
		
		\node[mynode] (v1) at (0,0) {};
		\node[mynode] (v2) at (0,2) {};
		\node[mynode] (v3) at (0,4) {};
		
		\node[mynode] (v4) at (1.5,1.5) {};
		\node[mynode] (v5) at (4,2) {};
		\node[mynode] (v6) at (2.4,2.75) {};
		
		\node[mynode] (v8) at (2,0) {};
		\node[mynode] (v9) at (4,0) {};
		\node[mynode] (v10) at (2,4) {};
		\node[mynode] (v11) at (4,4) {};
		
		\draw[thick]     (v1) -- (v2);
		\draw[thick]     (v2) -- (v3);
		\draw[thick]     (v8) -- (v4);
		\draw[thick]     (v4) -- (v10);
		\draw[thick]     (v10) -- (v6);
		\draw[thick]     (v6) -- (v8);
		\draw[thick]     (v11) -- (v5);
		\draw[thick]     (v5) -- (v9);
		\draw[dotted]     (v1) -- (v8);
		\draw[dotted]     (v8) -- (v9);
		\draw[dotted]     (v2) -- (v4);
		\draw[dotted]     (v4) -- (v5);
		\draw[dotted]     (v5) -- (v6);
		\draw[dotted]     (v2) -- (v6);
		\draw[dotted]     (v3) -- (v10);
		\draw[dotted]     (v10) -- (v11);
		\draw[dashed]     (v1) -- (v4);
		\draw[dashed]     (v4) -- (v11);
		\draw[dashed]     (v2) -- (v8);
		\draw[dashed]     (v8) -- (v5);
		\draw[dashed]     (v5) -- (v10);
		\draw[dashed]     (v10) -- (v2);
		\draw[dashed]     (v3) -- (v6);
		\draw[dashed]     (v6) -- (v9);
		\end{tikzpicture}
		
	\end{center}
	We describe the action of $A^{(1)}, A^{(2)}, A^{(3)}$ on a corner point. For $i \in \{1,2,3\}$, the map $A^{(i)}$ sends a corner point to one adjacent point. Specifically, the map $A^{(1)}$ sends it to an adjacent point whose edge is dotted. On the other hand, the map $A^{(2)}$ sends it to an adjacent point whose edge is solid while $A^{(3)}$ to an adjacent point whose edge is dashed. We describe the action of $A^{(1)}, A^{(2)}, A^{(3)}$ on a non-corner point. For $i \in \{1,2,3\}$, the map $A^{(i)}$ sends a non-corner point to two adjacent points. Specifically, the map $A^{(1)}$ sends it to adjacent points whose edges are dotted. On the other hand, the map $A^{(2)}$ sends it to adjacent points whose edge are solid while $A^{(3)}$ to adjacent points whose edge are dashed. If the adjacent point happens to be a corner, the coefficient of the corner point is 2. Otherwise, the coefficient is 1. We can draw an analogous diagram for the case $D \geq 3$ to describe the action of $A^{(1)}, A^{(2)}, A^{(3)}$.

\end{example}

\begin{example}\rm 
	Let $D = 2$ and consider the cycle $\Gamma_o=(X_o,R_o)$ on five vertices. Then $O_o = \{(0,0,0), (1,0,0), (2,0,0) \} \cup \{(x,1,0)\ |\ x \in X_o \} \cup \{(x,2,0)\ |\ x \in X_o \}$. We describe the action of $A^{(1)}, A^{(2)}, A^{(3)}$ on $\chi_{\Omega_o(0,0,0)}$. Since $|\Omega_o(0,0,0)| = 5 = |X_o|$, we have
	\begin{align*}
	A^{(1)}\chi_{\Omega_o(0,0,0)} & = \chi_{\Omega_o(1,0,0)},\\
	A^{(2)}\chi_{\Omega_o(0,0,0)} & = \chi_{\Omega_o(0,1,0)},\\
	A^{(3)}\chi_{\Omega_o(0,0,0)} & = \chi_{\Omega_o(1,1,0)},
	\end{align*}
	by Lemma \ref{lem: action of the adjacency map on corner points}. We show the action of $A^{(1)}, A^{(2)}, A^{(3)}$ on $\chi_{\Omega_o(0,1,0)}$. Since $|\Omega_o(0,1,0)| = 2|X_o|$, we obtain
	\begin{align*}
	A^{(1)}\chi_{\Omega_o(0,1,0)} & = \chi_{\Omega_o(4,1,0)}+\chi_{\Omega_o(1,1,0)},\\
	A^{(2)}\chi_{\Omega_o(0,1,0)} & = 2\chi_{\Omega_o(0,0,0)}+\chi_{\Omega_o(0,2,0)},\\
	A^{(3)}\chi_{\Omega_o(0,1,0)} & = \chi_{\Omega_o(1,0,0)}+\chi_{\Omega_o(1,2,0)},
	\end{align*}
	by Lemma \ref{lem: action of the adjacency map on non-corner points}.
\end{example}

\section{Another basis of the fundamental module}\label{sect: another basis of the fundamental module}

In Section \ref{sect: fundamental T-module of cycles}, we found a basis for $\Lambda_a$ consisting of common eigenvectors of the linear maps $A^{*(1)}, A^{*(2)}, A^{*(3)}$. In this section, we find a basis for $\Lambda_a$ consisting of common eigenvectors of linear maps $A^{(1)}, A^{(2)}, A^{(3)}$.\\

	Recall the scalars $q^{h}_{ij}$ from \eqref{eqn: the scalars q h i j}. 
	
\begin{lemma}\label{lem: number of nonzero q h i j}
	With Assumption \ref{assump: assumptions}, we have
	\begin{align}\label{eqn: number of nonzero q h i j}
	|\{q_{ij}^h \neq 0\ |\ h,i,j \in \{0,1,\ldots,D\} \}| & = \operatorname{dim}(\Lambda_a).
	\end{align}
\end{lemma}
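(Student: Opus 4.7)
The plan is to compute the Krein parameters $q^h_{ij}$ of $\Gamma_a$ explicitly, characterize those that are nonzero, and count them by case analysis.

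First, I would use the formulas for the primitive idempotents in Corollary \ref{cor: primitive idempotents of Gamma a}, together with the product-to-sum identity
\begin{align*}
(\zeta_a^{k_1 s} + \zeta_a^{-k_1 s})(\zeta_a^{k_2 s} + \zeta_a^{-k_2 s}) = (\zeta_a^{(k_1+k_2)s} + \zeta_a^{-(k_1+k_2)s}) + (\zeta_a^{(k_1-k_2)s} + \zeta_a^{-(k_1-k_2)s}),
\end{align*}
to evaluate the entry-wise product $E_i \circ E_j$ at each entry $xy$ (writing $s = x-y$). After reading off coefficients in the basis $\{E_0, \ldots, E_D\}$ via $E_i \circ E_j = |X_a|^{-1} \sum_h q^h_{ij} E_h$, a short case analysis (handling $E_0$ separately, and, when $a = e$, also $E_D$, each of which has a single Fourier mode rather than two) yields the characterization
\begin{align*}
q^h_{ij} \neq 0 \quad \Longleftrightarrow \quad h \equiv \pm i \pm j \pmod{|X_a|} \text{ for some choice of signs,}
\end{align*}
valid for all $h, i, j \in \{0, 1, \ldots, D\}$.

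Next, for each fixed $h$ I would count pairs $(i, j) \in \{0, \ldots, D\}^2$ satisfying this congruence. When $h = 0$ the condition reduces essentially to $i = j$, yielding $D + 1$ pairs. When $h$ lies in the interior range, inclusion-exclusion applied to the four congruences $i + j \equiv h$, $i + j \equiv -h$, $i - j \equiv h$, and $j - i \equiv h \pmod{|X_a|}$ gives exactly $|X_a|$ pairs (that is, $2D$ in the even case and $2D + 1$ in the odd case). In the even case, the boundary value $h = D$ requires separate treatment because $D \equiv -D \pmod{2D}$ collapses two of the four congruences, and one obtains $D + 1$ pairs. Summing gives $(D+1) + (D-1)(2D) + (D+1) = 2D^2 + 2$ in the even case and $(D+1) + D(2D+1) = 2D^2 + 2D + 1$ in the odd case, both matching $\dim(\Lambda_a)$ from Theorem \ref{thm: basis of the fundamental module}.

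The main obstacle will be the inclusion-exclusion bookkeeping: one must identify which pairs among the four congruences can coincide (generically this forces $i$ or $j$ to lie in $\{0, D\}$), and this demands parity-dependent care at the boundary values of $h$. But each subcase reduces to elementary modular arithmetic, and the totals agree with $\dim(\Lambda_a)$.
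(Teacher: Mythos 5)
Your proposal is correct and takes essentially the same route as the paper: the paper's (very terse) proof likewise counts the nonzero $q^h_{ij}$ directly from the explicit primitive idempotents of Corollary \ref{cor: primitive idempotents of Gamma a} and the defining relation \eqref{eqn: the scalars q h i j}, then matches the total against $\operatorname{dim}(\Lambda_a)$ from Theorem \ref{thm: basis of the fundamental module}. Your Fourier-mode characterization $q^h_{ij}\neq 0 \Leftrightarrow h \equiv \pm i \pm j \pmod{|X_a|}$ and the ensuing inclusion-exclusion (with the boundary cases $h=0$, and $h=D$ when $a=e$) are exactly the details the paper leaves implicit, and your counts $2D^2+2$ and $2D^2+2D+1$ check out.
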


\begin{proof}
	We count the left-hand side of \eqref{eqn: number of nonzero q h i j} using Corollary \ref{cor: primitive idempotents of Gamma a}, and \eqref{eqn: the scalars q h i j}. We prove equality using Theorem \ref{thm: basis of the fundamental module}.
\end{proof}

\begin{corollary}\label{cor: another basis of Lambda a}
	With Assumption \ref{assump: assumptions}, $\{Q_{h,i,j} \neq \mathbf{0} \ |\ h,i,j \in \{0,1,\ldots,D\} \}$ is a basis of $\Lambda_a$. Moreover, for $h,i,j \in \{0,1,\ldots,D\}$, we have
	\begin{align*}
	A^{(1)}Q_{h,i,j} & = \theta_h Q_{h,i,j},\\
	A^{(2)}Q_{h,i,j} & = \theta_i Q_{h,i,j},\\
	A^{(3)}Q_{h,i,j} & = \theta_j Q_{h,i,j}.
	\end{align*}
\end{corollary}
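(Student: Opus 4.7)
The plan is to obtain the basis statement by a linear-independence-plus-dimension-count argument, and then to verify the eigenvalue equations by rewriting each $Q_{h,i,j}$ in the form given by (J1) and pushing the adjacency maps through the primitive idempotents.

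First I would assemble the basis claim. By (J2) each nonzero $Q_{h,i,j}$ lies in $\Lambda_a$, and by (J3) the collection $\{Q_{h,i,j} \neq \mathbf{0} \mid h,i,j \in \{0,1,\ldots,D\}\}$ is mutually orthogonal; since all these vectors are nonzero they are linearly independent in $\Lambda_a$. Lemma \ref{lem: number of nonzero q h i j} tells us that the cardinality of this set equals $\dim(\Lambda_a)$, so the linearly independent set already has the right size and must span $\Lambda_a$. This gives the first assertion.

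For the eigenvalue equations, the key observation is that $A^{(r)}$ and $E_i^{(s)}$ act on distinct tensor factors whenever $r \neq s$, so they commute. Starting from (J1), I would write
\begin{align*}
A^{(1)}Q_{h,i,j} &= A^{(1)}\,|X_a|\,E_h^{(1)}E_i^{(2)}E_j^{(3)}(P_{0,0,0}) \\
&= |X_a|\,\bigl(A^{(1)}E_h^{(1)}\bigr)\,E_i^{(2)}E_j^{(3)}(P_{0,0,0}) \\
&= \theta_h\,|X_a|\,E_h^{(1)}E_i^{(2)}E_j^{(3)}(P_{0,0,0}) \\
&= \theta_h\,Q_{h,i,j},
\end{align*}
where I moved $A^{(1)}$ past $E_i^{(2)}E_j^{(3)}$ using the tensor-factor commutativity above, and applied (G3) at the last step. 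The computations for $A^{(2)}Q_{h,i,j} = \theta_i Q_{h,i,j}$ and $A^{(3)}Q_{h,i,j} = \theta_j Q_{h,i,j}$ are entirely analogous, moving $A^{(2)}$ (respectively $A^{(3)}$) past the other two factor-specific primitive idempotents and then invoking (G3) in the appropriate slot.

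There is no real obstacle here; everything reduces to bookkeeping facts already compiled in the excerpt. The only point one must be careful about is to invoke the commutativity $A^{(r)}E_i^{(s)} = E_i^{(s)}A^{(r)}$ for $r \neq s$, which is immediate from Definitions \ref{defn: the adjacency maps on tensor V}--\ref{defn: the primitive idempotent maps on tensor V} because these operators act nontrivially only on distinct tensor factors.
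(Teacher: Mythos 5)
Your proof is correct and follows essentially the same route as the paper: linear independence via (J2)--(J3), the dimension count via Lemma \ref{lem: number of nonzero q h i j}, and then the eigenvalue equations from the idempotent structure (the paper works directly from Definition \ref{defn: the vectors Q h i j} together with (C5) and $A_1=\sum_{k=0}^{D}\theta_k E_k$, while you equivalently use (J1), tensor-factor commutativity, and (G3)). One small point: passing from the count of nonzero $q^{h}_{ij}$ in that lemma to the count of nonzero $Q_{h,i,j}$ requires citing (J4), which the paper does explicitly and you should add.
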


\begin{proof}
	By (J2)--(J3), $\{Q_{h,i,j} \neq \mathbf{0} \ |\ h,i,j \in \{0,1,\ldots,D\} \}$ is linearly independent in $\Lambda_a$. By (J4) and Lemma \ref{lem: number of nonzero q h i j}, this set is a basis of $\Lambda_a$. The above equations hold by Definition \ref{defn: the vectors Q h i j}, Definition \ref{defn: the adjacency maps on tensor V}, (C5), and since $A_1 = \sum_{k=0}^D \theta_k E_k$.
\end{proof}

\section{Writing the $Q_{h,i,j}$ in terms of the $P_{r,s,t}$}

Recall that $\{\chi_{\Omega}\ |\ \Omega \in \mathcal{O}(\Gamma_a) \}$ is a basis of $\Lambda_a$ and each $\chi_{\Omega}$ is equal to a vector $P_{h,i,j}$ for some $h,i,j \in \{0,1,\ldots,D\}$. In Corollary \ref{cor: another basis of Lambda a}, $\{Q_{h,i,j} \neq \mathbf{0}\ |\ h,i,j \in \{0,1,\ldots,D\} \}$ is also a basis of $\Lambda_a$. In this section, we aim to write the $Q_{h,i,j}$ in terms of the $P_{h,i,j}$.\\

For convenience, let $y$ denote the basis vector of $V_a$ corresponding to vertex $0$.

\begin{lemma}\label{lem: action of the primitive idempotents on the basis vector 0} 
	With Assumption \ref{assump: assumptions}, let $E_0, E_1, \ldots, E_D$ denote the primitive idempotents of $\Gamma_a$. If $a=e$, then
	\begin{align*}
	E_0y & = \textstyle \frac{1}{2D} \sum_{x \in X_e} x,\\
	E_jy & = \textstyle \frac{1}{2D} \sum_{x \in X_e} (\zeta_e^{xj}+\zeta_e^{-xj})x, \quad (j\in\{1,2,\ldots,D-1\}),\\
	E_Dy & = \textstyle \frac{1}{2D}\sum_{x \in X_e} \zeta_e^{xD}x.
	\end{align*}
	If $a = o$, then
	\begin{align*}
	E_0y & = \textstyle \frac{1}{2D+1} \sum_{x \in X_o} x,\\
	E_jy & = \textstyle \frac{1}{2D+1} \sum_{x \in X_o} (\zeta_o^{xj}+\zeta_o^{-xj})x,\quad (j\in\{1,2,\ldots,D\}). 
	\end{align*}
\end{lemma}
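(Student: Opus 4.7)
The statement is a direct computational consequence of Corollary \ref{cor: primitive idempotents of Gamma a}, so the plan is simply to apply each of those explicit expressions for $E_0, E_1, \ldots, E_D$ to the basis vector $y$ corresponding to vertex $0$, and simplify.

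First I would recall the identification of $\operatorname{End}(V_a)$ with the vector space having basis $\{xy \mid x,y \in X_a\}$, under which the matrix unit $xy$ acts on a basis vector $z \in X_a$ by $(xy)z = \delta_{y,z}\, x$. With this convention fixed, if $E_j = \sum_{x,w \in X_a} c_{xw}\, xw$ for some coefficients $c_{xw}$, then applying $E_j$ to the basis vector corresponding to $0$ kills every summand with $w \neq 0$, yielding $E_j \cdot 0 = \sum_{x \in X_a} c_{x,0}\, x$.

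Next I would plug in, case by case, the explicit expressions from Corollary \ref{cor: primitive idempotents of Gamma a}. For $a = e$, the coefficient $c_{x,0}$ in $E_0$ is $\frac{1}{2D}$, so $E_0 y = \frac{1}{2D}\sum_{x \in X_e} x$; for $1 \le j \le D-1$ it is $\frac{1}{2D}(\zeta_e^{xj}+\zeta_e^{-xj})$, giving the middle formula; and for $j = D$ it is $\frac{1}{2D}\zeta_e^{xD}$, giving the last formula. The case $a = o$ is handled in exactly the same way, using the two cases $j = 0$ and $1 \le j \le D$ from Corollary \ref{cor: primitive idempotents of Gamma a}.

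There is no real obstacle here: the lemma is just the specialization of the identity $E_j y = \sum_{x \in X_a}(E_j)_{x,0}\, x$ combined with the formulas already at hand. The only point requiring care is the bookkeeping $(x-y)j \mapsto xj$ when $y = 0$ in the exponents of $\zeta_a$, after which everything matches the stated expressions.
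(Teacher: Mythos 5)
Your proposal is correct and is exactly the paper's argument: the paper's proof reads simply ``Immediate from Corollary \ref{cor: primitive idempotents of Gamma a},'' and your write-up just makes explicit the routine step of extracting the column of $E_j$ indexed by the vertex $0$ (i.e.\ setting $y=0$ in the exponents $(x-y)j$). No gaps; nothing further is needed.
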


\begin{proof}
	Immediate from Corollary \ref{cor: primitive idempotents of Gamma a}.
\end{proof}

\begin{definition}\label{defn: the vectors Q h i j prime}\rm With Assumption \ref{assump: assumptions} we define for $k \in \{0,1,\ldots,D\}$ and $x \in X_a$
	\begin{align*}
	\zeta_a(x,k) & = \left\{\begin{array}{ll}
	1 & \text{if } k=0,\\
	\zeta_e^{xD} & \text{if } a=e \text{ and } k=D,\\
	\zeta_e^{xk}+\zeta_e^{-xk} & \text{if } a=e \text{ and } k \notin \{0,D\},\\
	\zeta_o^{xk}+\zeta_o^{-xk} & \text{if } a=o \text{ and } k \neq 0.
	\end{array} \right.
	\end{align*}
	By Lemma \ref{lem: action of the primitive idempotents on the basis vector 0} observe that $E_ky = \textstyle |X_a|^{-1} \sum_{x \in X_a} \zeta_a(x,k)x$ for $k \in \{0,1,\ldots,D\}$.
\end{definition}

\begin{lemma}\label{lem: symmetry of zeta a}
	With Assumption \ref{assump: assumptions}, we have
	\begin{align*}
	\zeta_a(x,k) & = \zeta_a(|X_a|-x,k), \quad (x \in X_a, k \in \{0,1,\ldots,D\}).
	\end{align*}
\end{lemma}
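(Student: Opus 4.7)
The plan is to verify the identity by a straightforward case analysis based on the piecewise definition of $\zeta_a(x,k)$, using only the fact that $\zeta_a$ is a primitive $|X_a|^{\text{th}}$ root of unity, so $\zeta_a^{|X_a|}=1$.

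First I would handle the trivial case $k=0$, where both sides equal $1$ by definition. For all remaining cases the key observation is that for any integer $m$,
\[
\zeta_a^{(|X_a|-x)m} \;=\; \zeta_a^{|X_a|m}\,\zeta_a^{-xm} \;=\; \zeta_a^{-xm},
\]
and symmetrically $\zeta_a^{-(|X_a|-x)m} = \zeta_a^{xm}$. Applied with $m=k$, this immediately swaps the two summands of $\zeta_a^{xk}+\zeta_a^{-xk}$, handling the generic cases (line~3 when $a=e$ and $k \notin \{0,D\}$, and line~4 when $a=o$ and $k \neq 0$).

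The only mildly special case is $a=e$, $k=D$, where the definition is $\zeta_e^{xD}$ rather than a sum. Here the same computation gives $\zeta_e^{(|X_e|-x)D} = \zeta_e^{-xD}$, so I must still check that $\zeta_e^{xD} = \zeta_e^{-xD}$, equivalently $\zeta_e^{2xD}=1$. This holds because $|X_e|=2D$ and $\zeta_e^{|X_e|} = \zeta_e^{2D} = 1$, hence $\zeta_e^{2xD} = (\zeta_e^{2D})^x = 1$. No step looks likely to cause trouble; there is essentially no obstacle beyond bookkeeping the four cases and invoking $\zeta_a^{|X_a|}=1$.
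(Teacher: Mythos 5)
Your proof is correct and follows essentially the same route as the paper, whose entire proof is the one-line observation that the identity follows from $\zeta_a$ being a primitive $|X_a|^{\text{th}}$ root of unity; you have simply made explicit the case analysis (including the genuinely special case $a=e$, $k=D$, where $\zeta_e^{2xD}=(\zeta_e^{2D})^x=1$ is needed) that the paper leaves to the reader.
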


\begin{proof}
	Follows from the fact that $\zeta_a$ is a fixed primitive $|X_a|^{\textsuperscript{th}}$ root of unity.
\end{proof}

\begin{lemma}
With Assumption \ref{assump: assumptions}, let $h,i,j \in \{0,1,\ldots,D\}$ be given. Then
	\begin{align}\label{eqn: expansion of Q h i j}
	Q_{h,i,j} & = \textstyle |X_a|^{-2} \sum_{g \in G_a'}\sum_{u,w,z \in X_a} \zeta_a(u,h)\zeta_a(w,i)\zeta_a(z,j)\ g(u \otimes w \otimes z).
	\end{align}
\end{lemma}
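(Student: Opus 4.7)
The plan is to reduce everything to the formula for $E_k y$ already given in Lemma \ref{lem: action of the primitive idempotents on the basis vector 0} (where $y$ is the vertex $0$), by using the rotation subgroup $G'_a$ to transport the action of each $E_k$ from $y$ to an arbitrary basis vector $x$.

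First, I would index the rotations: for each $x \in X_a$ there is a unique $g_x \in G'_a$ with $g_x(y) = x$, and the assignment $x \mapsto g_x$ is a bijection $X_a \to G'_a$. Since each primitive idempotent $E_k$ lies in the Bose--Mesner algebra $M$, automorphisms of $\Gamma_a$ commute with $E_k$; in particular $g_x E_k = E_k g_x$ on $V_a$. Combining this with the explicit expression
\begin{equation*}
E_k y \;=\; |X_a|^{-1} \sum_{u \in X_a} \zeta_a(u,k)\, u
\end{equation*}
from Definition \ref{defn: the vectors Q h i j prime}, I obtain, for every $x \in X_a$ and $k \in \{0,1,\ldots,D\}$,
\begin{equation*}
E_k x \;=\; E_k g_x(y) \;=\; g_x(E_k y) \;=\; |X_a|^{-1} \sum_{u \in X_a} \zeta_a(u,k)\, g_x(u).
\end{equation*}

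Next I would substitute this expression into Definition \ref{defn: the vectors Q h i j} three times, one for each tensor factor:
\begin{equation*}
Q_{h,i,j} \;=\; |X_a| \sum_{x \in X_a} E_h x \otimes E_i x \otimes E_j x.
\end{equation*}
Since the same rotation $g_x$ acts on each tensor factor and since $g_x$ acts diagonally on $V_a^{\otimes 3}$ as a linear map (so it distributes over linear combinations in each slot), the three nested sums collapse to a single triple sum multiplied by $g_x$. This yields
\begin{equation*}
Q_{h,i,j} \;=\; |X_a| \cdot |X_a|^{-3} \sum_{x \in X_a} \sum_{u,w,z \in X_a} \zeta_a(u,h)\zeta_a(w,i)\zeta_a(z,j)\, g_x(u \otimes w \otimes z).
\end{equation*}

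Finally, I reindex the outer sum via the bijection $x \leftrightarrow g_x$ from $X_a$ to $G'_a$, which turns the sum over $x \in X_a$ into the sum over $g \in G'_a$, and simplify $|X_a| \cdot |X_a|^{-3} = |X_a|^{-2}$. This gives exactly the claimed identity \eqref{eqn: expansion of Q h i j}. There is no real obstacle here; the only conceptual point is that $G'_a$ acts on $V_a$ by permuting the standard basis and commutes with $M$, so everything reduces to the already established formula for $E_k y$.
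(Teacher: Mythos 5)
Your proof is correct and takes essentially the same approach as the paper's: both parametrize the diagonal sum defining $Q_{h,i,j}$ by the regular action of the rotation subgroup $G_a'$, commute the primitive idempotents past the automorphisms, and finish with the expansion $E_k y = |X_a|^{-1}\sum_{u \in X_a} \zeta_a(u,k)\,u$. The only cosmetic difference is that you work from Definition \ref{defn: the vectors Q h i j} and commute automorphisms with the Bose--Mesner algebra directly on $V_a$, whereas the paper cites (J1) and (K1) to perform the same commutation on $V_a^{\otimes 3}$; both routes pass through the identical intermediate expression $|X_a|\sum_{g \in G_a'} g(E_h y \otimes E_i y \otimes E_j y)$.
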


\begin{proof}
	By (J1) and (K1), we have
	\begin{align*}
	Q_{h,i,j} & = |X_a|E_h^{(1)}E_i^{(2)}E_k^{(3)}P_{0,0,0}\\
	& = \textstyle |X_a|E_h^{(1)}E_i^{(2)}E_k^{(3)} \sum_{g \in G_a'}g(y \otimes y \otimes y)\\
	& = \textstyle |X_a|\sum_{g \in G_a'}g(E_hy \otimes E_iy \otimes E_jy).
	\end{align*}
	Statement holds by the comment below Definition \ref{defn: the vectors Q h i j prime}. 
\end{proof}

\begin{proposition}\label{prop: transition from char vector to Q h i j}
	With Assumption \ref{assump: assumptions}, let $h,i,j \in \{0,1,\ldots,D\}$ be given. Then
	\begin{align*}
	Q_{h,i,j} & = |X_a|^{-2}\textstyle \sum_{\Omega \in \mathcal{O}(\Gamma_a)}\frac{|X_a|}{|\Omega|} (\sum_{(u,w,z) \in \Omega} \zeta_a(u,h)\zeta_a(w,i)\zeta_a(z,j)) \chi_{\Omega}.
	\end{align*}
\end{proposition}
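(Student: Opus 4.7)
My plan is to start from the expansion of $Q_{h,i,j}$ provided in equation \eqref{eqn: expansion of Q h i j} and reorganize the double sum by the $G'_a$-orbits on $X_a \times X_a \times X_a$, then pass to $G_a$-orbits. Writing $f(u,w,z) = \zeta_a(u,h)\zeta_a(w,i)\zeta_a(z,j)$ for brevity, I would first swap the order of summation, so that
\begin{align*}
Q_{h,i,j} & = |X_a|^{-2} \sum_{u,w,z \in X_a} f(u,w,z)\,\sum_{g \in G'_a} g(u \otimes w \otimes z).
\end{align*}
Because $G'_a$ acts freely on $X_a$ by rotations, for every triple $(u,w,z)$ the inner sum $\sum_{g \in G'_a} g(u \otimes w \otimes z)$ is exactly the characteristic vector of the $G'_a$-orbit of $(u,w,z)$. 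Partitioning the outer sum over $X_a^3$ into $G'_a$-orbits $\Omega'$ therefore yields
\begin{align*}
Q_{h,i,j} & = |X_a|^{-2} \sum_{\Omega'} \Bigl(\sum_{(u,w,z) \in \Omega'} f(u,w,z)\Bigr)\,\chi_{\Omega'},
\end{align*}
where $\Omega'$ ranges over the $G'_a$-orbits.

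Next I would collect the $G'_a$-orbits lying inside each $G_a$-orbit $\Omega \in \mathcal{O}(\Gamma_a)$. By Lemma \ref{lem: number of elements in each orbit} there are two cases. If $|\Omega| = |X_a|$, then $\Omega$ coincides with its unique $G'_a$-orbit, so $\chi_{\Omega'} = \chi_\Omega$ and the coefficient $\frac{|X_a|}{|\Omega|} = 1$ matches the proposition. If $|\Omega| = 2|X_a|$, then $\Omega = \Omega_1 \sqcup \Omega_2$ where $\Omega_2$ is obtained from $\Omega_1$ by any reflection in $G_a \setminus G'_a$; in particular $\Omega_2$ is the $G'_a$-orbit of $(|X_a|-u_0,|X_a|-w_0,|X_a|-z_0)$ for any representative $(u_0,w_0,z_0)\in\Omega_1$, and $\chi_{\Omega_1}+\chi_{\Omega_2} = \chi_\Omega$.

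The crux of the argument is then to show that the weight $f$ sums to the same value on $\Omega_1$ and on $\Omega_2$. Parametrizing $\Omega_1 = \{(u_0+m,w_0+m,z_0+m) : m\}$ and substituting $m \mapsto -m$, one gets
\begin{align*}
\sum_{(u,w,z) \in \Omega_2} f(u,w,z) & = \sum_{m} \zeta_a(-u_0 - m,h)\zeta_a(-w_0 - m,i)\zeta_a(-z_0 - m,j),
\end{align*}
and each factor equals its counterpart $\zeta_a(u_0+m,h)$, etc., by Lemma \ref{lem: symmetry of zeta a}. Hence $\sum_{\Omega_2} f = \sum_{\Omega_1} f = \tfrac{1}{2}\sum_{\Omega} f$, so the combined contribution to $Q_{h,i,j}$ from $\Omega$ is $\tfrac{1}{2}\bigl(\sum_{\Omega} f\bigr)\chi_\Omega$, again matching $\tfrac{|X_a|}{|\Omega|}\bigl(\sum_{\Omega} f\bigr)\chi_\Omega$. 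Assembling the two cases gives the asserted formula.

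The only non-routine point is the equality $\sum_{\Omega_1} f = \sum_{\Omega_2} f$, and this is precisely where Lemma \ref{lem: symmetry of zeta a} (invariance of $\zeta_a(x,k)$ under $x \mapsto |X_a|-x$) is used; everything else is bookkeeping on orbit decompositions.
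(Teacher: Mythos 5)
Your proposal is correct and follows essentially the same route as the paper's proof: both decompose the expansion \eqref{eqn: expansion of Q h i j} into $G'_a$-orbit contributions, split into the cases $|\Omega| = |X_a|$ and $|\Omega| = 2|X_a|$ via Lemma \ref{lem: number of elements in each orbit}, pair the two $G'_a$-orbits inside a large $G_a$-orbit by the negation map $(u,w,z) \mapsto (|X_a|-u,|X_a|-w,|X_a|-z)$, and invoke Lemma \ref{lem: symmetry of zeta a} to equate the weights on the two halves. The only difference is bookkeeping: you sum orbit-by-orbit while the paper pairs individual triples with their negations, which yields the same coefficient $\tfrac{1}{2}\sum_{\Omega} \zeta_a(u,h)\zeta_a(w,i)\zeta_a(z,j)$.
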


\begin{proof}
	Pick $\Omega \in \mathcal{O}(\Gamma_a)$. By Lemma \ref{lem: number of elements in each orbit}, either $|\Omega| = |X_a|$ or $|\Omega|=2|X_a|$. Suppose $|\Omega| = |X_a|$. Then for each $(u,w,z) \in \Omega$, we have
	\begin{align*}
	& \textstyle \sum_{g \in G_a'}  \zeta_a(u,h)\zeta_a(w,i)\zeta_a(z,j)\ g(u \otimes w \otimes z)\\ 
	& = \zeta_a(u,h)\zeta_a(w,i)\zeta_a(z,j) \textstyle \sum_{g \in G_a'} g(u \otimes w \otimes z)\\
	& = \zeta_a(u,h)\zeta_a(w,i)\zeta_a(z,j) \chi_{\Omega}.
	\end{align*}
	Thus, the coefficient of $\chi_{\Omega}$ in \eqref{eqn: expansion of Q h i j} is $|X_a|^{-2}\sum_{(u,w,z) \in \Omega} \zeta_a(u,h)\zeta_a(w,i)\zeta_a(z,j)$. Now, we assume $|\Omega| = 2|X_a|$. If $(u,w,z) \in \Omega$, then we find that $(|X_a|-u,|X_a|-w,|X_a|-z) \in \Omega$ where $|X_a|$ is treated as vertex $0$. Also, $(|X_a|-u,|X_a|-w,|X_a|-z) \neq g(u,w,z)$ $\forall g \in G_a'$. By this, we find a partition $\Omega = \Omega' \cup \Omega''$ such that each of $\Omega'$ and $\Omega''$ is $G_a'$-invariant. In particular, the map $(u,w,z) \mapsto (u',w',z') := (|X_a|-u,|X_a|-w,|X_a|-z)$ is a bijection from $\Omega'$ to $\Omega''$. By Lemma \ref{lem: symmetry of zeta a}, for $(u,w,z) \in \Omega'$, we have
	\begin{align*}
	& \textstyle \sum_{g \in G'_a}  \zeta_a(u,h)\zeta_a(w,i)\zeta_a(z,j)\ g(u \otimes w \otimes z)\\ 
	& \qquad + \textstyle \sum_{g \in G'_a} \zeta_a(u',h)\zeta_a(w',i)\zeta_a(z',j)\ g(u' \otimes w' \otimes z')\\
	& = \zeta_a(u,h)\zeta_a(w,i)\zeta_a(z,j) \textstyle \sum_{g \in G'_a} g(u \otimes w \otimes z)\\
	& \qquad + \textstyle \zeta_a(u',h)\zeta_a(w',i)\zeta_a(z',j)\sum_{g \in G'_a} g(u' \otimes w' \otimes z')\\
	& = \textstyle \frac{1}{2} \times (\zeta_a(u,h)\zeta_a(w,i)\zeta_a(z,j)+\zeta_a(u',h)\zeta_a(w',i)\zeta_a(z',j)) \chi_{\Omega}.
	\end{align*}
	Therefore, the coefficient of $\chi_{\Omega}$ in \eqref{eqn: expansion of Q h i j} is $|X_a|^{-2} \times \frac{1}{2}\sum_{(u,w,z) \in \Omega} \zeta_a(u,h)\zeta_a(w,i)\zeta_a(z,j)$. Statement holds.
\end{proof}

\begin{corollary}
	\label{cor: matrix of transition from P h i j to Q h i j}
	With Assumption \ref{assump: assumptions}, we define $\Omega(r,s,t)$ as the collection of all triples $(x,y,z) \in X_a \times X_a \times X_a$ whose distance profile is $(r,s,t)$. Then we have
	\begin{align}
	|X_a|\langle Q_{h,i,j},P_{r,s,t} \rangle = \textstyle |\Omega(r,s,t)|^{-1} \sum_{(u,w,z) \in \Omega(r,s,t)} \zeta_a(u,h)\zeta_a(w,i)\zeta_a(z,j)\| P_{r,s,t}\|^2
	\end{align}
	for $h,i,j,r,s,t \in \{0,1,\ldots,D\}$ where $\langle\ ,\ \rangle$ is the unique Hermitian form on $V_a^{\otimes 3}$ with respect to which the basis $\{x \otimes y \otimes z\ |\  x,y,z \in X_a \}$ is orthonormal.
\end{corollary}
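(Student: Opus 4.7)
The plan is to derive the formula by combining Proposition \ref{prop: transition from char vector to Q h i j} with the orthogonality relations of the $P_{h,i,j}$. First I would recall from (I3) together with Corollary \ref{cor: distance profiles of orbits} that the vectors $\{\chi_\Omega \mid \Omega \in \mathcal{O}(\Gamma_a)\}$ and the nonzero vectors $\{P_{h,i,j}\}$ coincide up to relabeling: concretely, if $\Omega(r,s,t)$ is nonempty, then it is itself an orbit in $\mathcal{O}(\Gamma_a)$ and $\chi_{\Omega(r,s,t)} = P_{r,s,t}$.

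Next I would take the expansion from Proposition \ref{prop: transition from char vector to Q h i j},
\begin{align*}
Q_{h,i,j} = |X_a|^{-2}\sum_{\Omega \in \mathcal{O}(\Gamma_a)}\frac{|X_a|}{|\Omega|}\left(\sum_{(u,w,z) \in \Omega} \zeta_a(u,h)\zeta_a(w,i)\zeta_a(z,j)\right) \chi_{\Omega},
\end{align*}
and pair both sides with $P_{r,s,t}$. By mutual orthogonality (I3), every inner product $\langle \chi_\Omega, P_{r,s,t}\rangle$ vanishes except when $\Omega = \Omega(r,s,t)$, in which case it equals $\|P_{r,s,t}\|^2$. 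Before doing this I would verify that the complex conjugation built into the Hermitian form is harmless here: by the explicit description in Definition \ref{defn: the vectors Q h i j prime} (together with the fact that $\zeta_e^D = -1$, so $\zeta_e^{xD} = (-1)^x \in \mathbb{R}$), each $\zeta_a(x,k)$ is real, and therefore so is each coefficient appearing in the expansion of $Q_{h,i,j}$.

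Substituting the surviving term and cancelling one factor of $|X_a|$ then yields
\begin{align*}
|X_a|\langle Q_{h,i,j},P_{r,s,t}\rangle = \frac{1}{|\Omega(r,s,t)|}\sum_{(u,w,z)\in\Omega(r,s,t)} \zeta_a(u,h)\zeta_a(w,i)\zeta_a(z,j)\,\|P_{r,s,t}\|^2,
\end{align*}
which is the claim. The one subtlety requiring care is the degenerate case where $\Omega(r,s,t)$ is empty, i.e.\ when $p^r_{s,t} = 0$; in that case $P_{r,s,t} = \mathbf{0}$ by (I4) and both sides vanish with the standard convention that the empty sum is zero, so the identity still holds. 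Beyond this bookkeeping, the proof is a direct consequence of the expansion formula and mutual orthogonality, so no significant obstacle is expected.
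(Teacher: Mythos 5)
Your proposal is correct and follows essentially the same route as the paper, which likewise deduces the identity from Proposition \ref{prop: transition from char vector to Q h i j}, the orthogonality (I3), and the identification of each nonzero $P_{r,s,t}$ with the characteristic vector $\chi_{\Omega(r,s,t)}$. Your two extra checks --- that each $\zeta_a(x,k)$ is real (so the Hermitian conjugation is harmless) and that the empty-orbit case $p^r_{s,t}=0$ degenerates consistently --- are sound points that the paper leaves implicit.
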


\begin{proof}
	Follows from Proposition \ref{prop: transition from char vector to Q h i j}, (I3), and since each $\chi_{\Omega}$ is equal to some $P_{r,s,t} \neq \mathbf{0}$. 
\end{proof}

	Note that the value of $\|P_{r,s,t}\|^2$ can be obtained using \cite[Lemma 9.11]{terwilligerS3}.

\section{Some comments}

To summarize, we investigated a distance-regular graph $\Gamma_a = (X_a,R_a)$ with vertex set $X_a$ and edge set $R_a$. The graph $\Gamma_a$ is a cycle with diameter $D \geq 2$ and is $Q$-polynomial with respect to the ordering $E_0, E_1, \ldots, E_{D}$ of primitive idempotents. By Lemma \ref{lem: the scalars beta, gamma, etc. from Q-poly DRG}, we obtain scalars $\beta, \gamma, \gamma^*, \varrho, \varrho^*$ from the eigenvalue sequences $\{\theta_i\}_{i=0}^D, \{\theta_i^* \}_{i=0}^D$ of $\Gamma_a$. Then we considered $S_3$-symmetric tridiagonal algebra $\mathbb{T} = \mathbb{T}(\beta, \gamma, \gamma^*, \varrho, \varrho^*)$. Let $V_a$ denote the standard module of $\Gamma_a$. By Theorem \ref{thm: S3-symmetric tridiagonal algebra acting on tensor power}, $V_a^{\otimes 3}$ turns into a $\mathbb{T}$-module via the maps $A^{(1)}, A^{(2)}, A^{(3)}, A^{*(1)}, A^{*(2)}, A^{*(3)}$. Within $V^{\otimes 3}$, there is a unique irreducible $\mathbb{T}$-module referred to as the fundamental $\mathbb{T}$-module $\Lambda_a$. We have shown that each of the following is a basis for $\Lambda_a$.
\begin{enumerate}[(i)]
	\item $\{\chi_{\Omega}\ |\ \Omega \in \mathcal{O}(\Gamma_a) \}$,
	\item $\{P_{h,i,j} \neq \mathbf{0}\ |\ h,i,j \in \{0,1,\ldots,D\} \}$,
	\item $\{Q_{h,i,j} \neq \mathbf{0}\ |\ h,i,j \in \{0,1,\ldots,D\} \}$.
\end{enumerate}
Here, we explore some conjectures made by Terwilliger in \cite{terwilligerS3} for general $Q$-polynomial distance-regular graphs. We confirm that these conjectures hold true for cycles. The details are provided below.

\begin{lemma}
	With Assumption \ref{assump: assumptions}, we write $\Lambda = \Lambda_a$. Then the following hold for $h,i,j \in \{0,1,\ldots,D\}$:
	\begin{align}
	E_h^{*(1)}E_i^{*(2)}E_j^{*(3)}\Lambda = \mathbf{0} \text{ if and only if } p^{h}_{ij} = 0, \label{eqn: Eh*1 Ei*2 Ej*3 on Lambda}\\
	E_h^{(1)}E_i^{(2)}E_j^{(3)}\Lambda = \mathbf{0} \text{ if and only if } q^{h}_{ij} = 0. \label{eqn: Eh1 Ei2 Ej3 on Lambda}
	\end{align}
\end{lemma}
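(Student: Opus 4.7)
The plan is to exploit the two explicit bases of $\Lambda = \Lambda_a$ already constructed, namely the $P$-basis from Corollary \ref{cor: nonzero p h i j} and the $Q$-basis from Corollary \ref{cor: another basis of Lambda a}. Each of these bases simultaneously diagonalizes one of the two triples of generators of $\mathbb{T}$, so the products $E_h^{*(1)}E_i^{*(2)}E_j^{*(3)}$ and $E_h^{(1)}E_i^{(2)}E_j^{(3)}$ act on the respective basis vectors by Kronecker deltas, after which the two equivalences fall out of (I4) and (J4).

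First I would prove \eqref{eqn: Eh*1 Ei*2 Ej*3 on Lambda}. By (H5) together with (H3) and the mutual distinctness of $\{\theta_i^*\}_{i=0}^D$, the map $E_h^{*(r)}$ is the spectral projector onto the $\theta_h^*$-eigenspace of $A^{*(r)}$ on $V_a^{\otimes 3}$. Combining this with the joint-eigenvector equations of Corollary \ref{cor: nonzero p h i j} yields
\begin{align*}
E_h^{*(1)}E_i^{*(2)}E_j^{*(3)}P_{r,s,t} = \delta_{h,r}\delta_{i,s}\delta_{j,t}P_{r,s,t}
\end{align*}
for all $r,s,t \in \{0,1,\ldots,D\}$. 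Since $\{P_{r,s,t} \neq \mathbf{0}\}$ is a basis of $\Lambda$, the image $E_h^{*(1)}E_i^{*(2)}E_j^{*(3)}\Lambda$ equals the span of $P_{h,i,j}$. Invoking (I4), this image vanishes precisely when $p^h_{ij} = 0$, which establishes \eqref{eqn: Eh*1 Ei*2 Ej*3 on Lambda}.

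Next I would prove \eqref{eqn: Eh1 Ei2 Ej3 on Lambda} by running the parallel argument with the second basis. From (G5), (G3), and the mutual distinctness of $\{\theta_i\}_{i=0}^D$, each $E_h^{(r)}$ is the spectral projector onto the $\theta_h$-eigenspace of $A^{(r)}$. Corollary \ref{cor: another basis of Lambda a} then gives $E_h^{(1)}E_i^{(2)}E_j^{(3)}Q_{r,s,t} = \delta_{h,r}\delta_{i,s}\delta_{j,t}Q_{r,s,t}$, so $E_h^{(1)}E_i^{(2)}E_j^{(3)}\Lambda$ is the span of $Q_{h,i,j}$, and (J4) closes the proof.

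I do not anticipate a genuine obstacle, since the entire argument is a routine unpacking of the spectral structure of the two bases. The only bookkeeping point to verify is that within each triple the three projectors commute: the starred projectors $E_h^{*(1)}, E_i^{*(2)}, E_j^{*(3)}$ are simultaneously diagonal in the basis \eqref{eqn: orthonormal basis of tensor V} by Definition \ref{defn: the dual primitive idempotent maps on tensor V}, and the unstarred projectors $E_h^{(1)}, E_i^{(2)}, E_j^{(3)}$ commute because they act on disjoint tensor factors per Definition \ref{defn: the primitive idempotent maps on tensor V}.
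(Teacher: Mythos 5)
Your proposal is correct and follows essentially the same route as the paper: both arguments reduce to showing that $E_h^{*(1)}E_i^{*(2)}E_j^{*(3)}\Lambda = \operatorname{span}\{P_{h,i,j}\}$ and $E_h^{(1)}E_i^{(2)}E_j^{(3)}\Lambda = \operatorname{span}\{Q_{h,i,j}\}$, and then invoke (I4) and (J4). The only cosmetic difference is that the paper justifies the Kronecker-delta action of the idempotent products directly from Definitions \ref{defn: the primitive idempotent maps on tensor V}, \ref{defn: the dual primitive idempotent maps on tensor V}, \ref{defn: the vectors P h i j}, \ref{defn: the vectors Q h i j} and (C5), whereas you derive it from the spectral-projector identities (G5)/(H5) together with the eigenvalue equations of Corollaries \ref{cor: nonzero p h i j} and \ref{cor: another basis of Lambda a}; both justifications are valid.
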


\begin{proof}
	Note that 
	\begin{align*}
	\Lambda & = \operatorname{span}\{P_{r,s,t}\ |\ r,s,t \in \{0,1,\ldots,D\} \},\\
	\Lambda & = \operatorname{span}\{Q_{r,s,t}\ |\ r,s,t \in \{0,1,\ldots,D\} \}.
	\end{align*}
	By Definitions \ref{defn: the dual primitive idempotent maps on tensor V} and \ref{defn: the vectors P h i j},  $E_h^{*(1)}E_i^{*(2)}E_j^{*(3)}\Lambda = \operatorname{span}\{P_{h,i,j}\}$. By Definitions \ref{defn: the primitive idempotent maps on tensor V} and \ref{defn: the vectors Q h i j} and (C5), $E_h^{(1)}E_i^{(2)}E_j^{(3)}\Lambda = \operatorname{span}\{Q_{h,i,j}\}$. Now, \eqref{eqn: Eh*1 Ei*2 Ej*3 on Lambda}--\eqref{eqn: Eh1 Ei2 Ej3 on Lambda} hold by (I4) and (J4).
\end{proof}

\begin{remark}\label{remk: first conjecture of Terwilliger}\rm 
	It is known that \eqref{eqn: Eh*1 Ei*2 Ej*3 on Lambda} holds for any $Q$-polynomial distance-regular graph (see \cite[Section 12]{terwilligerS3}). But the claim \eqref{eqn: Eh1 Ei2 Ej3 on Lambda} holds for any $Q$-polynomial distance-regular graph remains open (see \cite[Conjecture 12.2]{terwilligerS3}). 
\end{remark}

\begin{lemma}
		With Assumption \ref{assump: assumptions}, we write $\Lambda = \Lambda_a$. Then the following hold.
		\begin{align}
		\{P_{0,i,i}\ |\ i \in \{0,1,\ldots,D\} \} & \text{ is a basis of }E_0^{*(1)}\Lambda, \label{eqn: E0*1 Lambda}\\
		\{P_{i,0,i}\ |\ i \in \{0,1,\ldots,D\}  \} & \text{ is a basis of } E_0^{*(2)}\Lambda, \\
		\{P_{i,i,0}\ |\ i \in \{0,1,\ldots,D\}  \} & \text{ is a basis of }E_0^{*(3)}\Lambda. \label{eqn: E0*3 Lambda}
		\end{align}
\end{lemma}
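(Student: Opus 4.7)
The plan is to extract all three claims directly from Corollary \ref{cor: nonzero p h i j} together with property (I4), by understanding how the idempotents $E_0^{*(r)}$ cut down the $P$-basis of $\Lambda$. I will prove \eqref{eqn: E0*1 Lambda} in detail; the arguments for the other two claims are then symmetric.

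First, by Corollary \ref{cor: nonzero p h i j}, every nonzero $P_{h,i,j}$ is an eigenvector of $A^{*(1)}$ with eigenvalue $\theta_h^*$. Combining this with (H5) and the fact that $\{\theta_i^*\}_{i=0}^D$ are mutually distinct (recorded in the introduction), I get
\begin{align*}
E_0^{*(1)} P_{h,i,j} = \delta_{h,0}\, P_{h,i,j}, \qquad h,i,j \in \{0,1,\ldots,D\}.
\end{align*}
Since $\{P_{h,i,j} \neq \mathbf{0}\}$ is a basis of $\Lambda$ by Corollary \ref{cor: nonzero p h i j}, applying $E_0^{*(1)}$ to this basis shows
\begin{align*}
E_0^{*(1)}\Lambda = \operatorname{span}\{P_{0,i,j} \neq \mathbf{0} \mid i,j \in \{0,1,\ldots,D\}\}.
\end{align*}

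Next, I identify which of the $P_{0,i,j}$ are nonzero using (I4): $P_{0,i,j} = \mathbf{0}$ if and only if $p^{0}_{ij} = 0$. By \eqref{eqn: the scalars p h i j}, $p^0_{ij}$ counts the vertices $z$ with $\partial(x,z)=i$ and $\partial(z,y) = j$ when $\partial(x,y)=0$, i.e.\ when $x=y$. Such a $z$ exists only when $i=j$, and in that case $p^0_{ii}$ equals the valency $k_i$, which is positive for all $i \in \{0,1,\ldots,D\}$. Hence $P_{0,i,j} \neq \mathbf{0}$ exactly when $i=j$, giving
\begin{align*}
E_0^{*(1)}\Lambda = \operatorname{span}\{P_{0,i,i} \mid i \in \{0,1,\ldots,D\}\}.
\end{align*}
Linear independence of $\{P_{0,i,i}\}_{i=0}^D$ is immediate: by (I3) these vectors are pairwise orthogonal and all nonzero, so they form a basis of $E_0^{*(1)}\Lambda$, establishing \eqref{eqn: E0*1 Lambda}.

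The remaining two statements follow by the same pattern. For \eqref{eqn: E0*3 Lambda}, use $A^{*(3)} P_{h,i,j} = \theta_j^* P_{h,i,j}$ to obtain $E_0^{*(3)}\Lambda = \operatorname{span}\{P_{h,i,0} \neq \mathbf{0}\}$, and argue that $p^h_{i,0}$ forces $h=i$ (since $\partial(z,y)=0$ means $z=y$, so $\partial(x,z)=\partial(x,y)$). For \eqref{eqn: E0*2 Lambda} (the $E_0^{*(2)}$ claim), the same reasoning with the eigenvalue $\theta_i^*$ reduces matters to deciding when $p^h_{0,j}$ is nonzero, which forces $h=j$. In each case one picks up exactly $D+1$ nonzero vectors, which are pairwise orthogonal by (I3), hence form a basis.

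There is no real obstacle here: the argument is essentially bookkeeping, combining the eigenvalue information in Corollary \ref{cor: nonzero p h i j} with the nonvanishing criterion (I4). The only point requiring care is the combinatorial check that $p^0_{ij}, p^h_{i0}, p^h_{0j}$ vanish unless the two ``non-zero'' subscripts coincide, which is transparent from the definition of intersection numbers once one specializes the appropriate index to $0$.
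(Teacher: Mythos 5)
Your proof is correct and follows essentially the same route as the paper: cut down the basis $\{P_{h,i,j} \neq \mathbf{0}\}$ of $\Lambda$ by the idempotent $E_0^{*(r)}$, then use (I4) together with the vanishing pattern of the intersection numbers $p^0_{ij}$, $p^h_{0j}$, $p^h_{i0}$ to identify the surviving vectors, with orthogonality (I3) giving independence. The only cosmetic differences are that you derive $E_0^{*(1)}P_{h,i,j} = \delta_{h,0}P_{h,i,j}$ via (H5) and the eigenvalue equations instead of directly from Definitions \ref{defn: the dual primitive idempotent maps on tensor V} and \ref{defn: the vectors P h i j}, and you verify the intersection-number vanishing by hand rather than citing (A1)--(A2), which encode exactly that triangle-inequality argument.
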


\begin{proof}
	We prove \eqref{eqn: E0*1 Lambda}. By Definitions \ref{defn: the dual primitive idempotent maps on tensor V} and \ref{defn: the vectors P h i j} and Corollary \ref{cor: nonzero p h i j}, we have
	\begin{align*}
	\{P_{0,i,j} \neq \mathbf{0}\ |\ i,j \in \{0,1,\ldots,D\} \} & \text{ is a basis of }E_0^{*(1)}\Lambda.
	\end{align*}
	Then \eqref{eqn: E0*1 Lambda} holds by (I4) and (A1)--(A2). The rest are proven similarly.
\end{proof}

\begin{lemma}
		With Assumption \ref{assump: assumptions}, we write $\Lambda = \Lambda_a$. Then the following hold.
	\begin{align}
	\{Q_{0,i,i}\ |\ i \in \{0,1,\ldots,D\} \} & \text{ is a basis of }E_0^{(1)}\Lambda, \label{eqn: E01 Lambda}\\
	\{Q_{i,0,i}\ |\ i \in \{0,1,\ldots,D\}  \} & \text{ is a basis of }E_0^{(2)}\Lambda, \\
	\{Q_{i,i,0}\ |\ i \in \{0,1,\ldots,D\}  \} & \text{ is a basis of }E_0^{(3)}\Lambda. \label{eqn: E03 Lambda}
	\end{align}
\end{lemma}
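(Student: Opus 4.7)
The plan is to mirror the argument used for the preceding lemma, replacing the roles of the dual primitive idempotents $E_i^{*(r)}$ and the vectors $P_{h,i,j}$ by the primitive idempotents $E_i^{(r)}$ and the vectors $Q_{h,i,j}$. I would first establish the analogue of the spanning statement: using Corollary \ref{cor: another basis of Lambda a}, each $Q_{h,i,j}$ is an eigenvector of $A^{(1)}$ with eigenvalue $\theta_h$, so (G5) gives $E_0^{(1)}Q_{h,i,j} = \delta_{h,0}Q_{0,i,j}$. Applying $E_0^{(1)}$ to the basis $\{Q_{h,i,j}\neq \mathbf{0}\ |\ h,i,j \in \{0,1,\ldots,D\}\}$ of $\Lambda$ therefore shows that
\begin{align*}
\{Q_{0,i,j} \neq \mathbf{0}\ |\ i,j \in \{0,1,\ldots,D\}\}
\end{align*}
spans $E_0^{(1)}\Lambda$, and linear independence is inherited from (J3).

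Next I would reduce this spanning set to the claimed one. By (J4), a vector $Q_{0,i,j}$ is nonzero if and only if $q^{0}_{ij} \neq 0$; and by (D3), the latter holds if and only if $i = j$. Combining these gives
\begin{align*}
\{Q_{0,i,i}\ |\ i \in \{0,1,\ldots,D\}\}
\end{align*}
as a basis of $E_0^{(1)}\Lambda$, establishing \eqref{eqn: E01 Lambda}.

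The remaining two statements are then obtained by running the identical argument with the coordinate $r = 1$ replaced by $r = 2$ and $r = 3$. In each case, $E_0^{(r)}$ picks out those $Q_{h,i,j}$ whose eigenvalue index in the corresponding slot is $0$, and (D3) together with (J4) again forces the two remaining indices to coincide. I do not expect a genuine obstacle here, since every tool needed is already stated: the eigenvector action in Corollary \ref{cor: another basis of Lambda a}, the nonvanishing criterion (J4), and the $Q$-polynomial identity (D3); the proof is essentially a bookkeeping translation of the previous lemma.
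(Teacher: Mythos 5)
Your proposal is correct and takes essentially the same route as the paper: show that the nonzero $Q_{0,i,j}$ form a basis of $E_0^{(1)}\Lambda$ (the paper gets this from (C5), Definitions \ref{defn: the primitive idempotent maps on tensor V} and \ref{defn: the vectors Q h i j}, and Corollary \ref{cor: another basis of Lambda a}, while you use the eigenvector property with (G5) and independence from (J3) --- both are valid), then invoke (J4) and the $Q$-polynomial identities to reduce to the index condition $i=j$. One small bookkeeping correction: for the second and third displays the relevant nonvanishing criteria are (D1), i.e.\ $q^{h}_{0j}=\delta_{hj}$, and (D2), i.e.\ $q^{h}_{i0}=\delta_{hi}$, rather than (D3), which is why the paper cites (D1)--(D3) collectively.
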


\begin{proof}
	We prove \eqref{eqn: E01 Lambda}. By (C5), Definitions \ref{defn: the primitive idempotent maps on tensor V} and \ref{defn: the vectors Q h i j}, and Corollary \ref{cor: another basis of Lambda a}, we have
	\begin{align*}
	\{Q_{0,i,j} \neq \mathbf{0}\ |\ i,j \in \{0,1,\ldots,D\} \} & \text{ is a basis of }E_0^{(1)}\Lambda.
	\end{align*}
	Then \eqref{eqn: E01 Lambda} holds by (J4) and (D1)--(D3). The rest are proven similarly.
\end{proof}

\begin{remark}\label{remk: second conjecture of Terwilliger}\rm 
	The claim \eqref{eqn: E0*1 Lambda}--\eqref{eqn: E03 Lambda} hold for any $Q$-polynomial distance-regular graph remains open (see \cite[Conjecture 12.3]{terwilligerS3}).
\end{remark}

\begin{remark}\label{remk: Problem 12.4}\rm 
	It is known that $Q$-polynomial distance regular graphs are equivalent to $P$- and $Q$-polynomial association schemes. Let $\Gamma = (X,R)$ be a $P$- and $Q$-polynomial association scheme. If $\operatorname{Aut}(\Gamma)$ has an abelian subgroup $G'$ that acts regularly on $X$, we say $\Gamma$ is a translation association scheme.   With Assumption \ref{assump: assumptions}, observe that $\Gamma_a$ is an example of $P$- and $Q$-polynomial translation schemes. In \cite{liangtantanakawang}, it was shown that \eqref{eqn: Eh1 Ei2 Ej3 on Lambda} holds for all $P$- and $Q$-polynomial translation schemes.
	It is interesting to know whether \eqref{eqn: E0*1 Lambda}--\eqref{eqn: E03 Lambda} hold for all such schemes (see \cite[Problem 12.4]{terwilligerS3}). 
\end{remark}

\begin{remark}\label{remk: S3-symmetric tridiagonal algebra and sl4}\rm 
	Recently, Martin and Terwilliger \cite{martinterwilliger} established some connections between the $S_3$-symmetric tridiagonal algebra $\mathbb{T}(2,0,0,4,4)$ and the special linear Lie algebra $\mathfrak{sl}_4(\mathbb{C})$. In particular, they proved that the universal enveloping algebra $U(\mathfrak{sl}_4(\mathbb{C}))$ is a homomorphic image of $\mathbb{T}(2,0,0,4,4)$ (see \cite[Definition 3.5, Lemma 3.6, and Note 3.7]{martinterwilliger}). Let $\beta, \gamma, \gamma^*, \varrho , \varrho^*$ denote the scalars in (L3). As a direction for future research, we can investigate which algebraic structure serves as a homomorphic image of the $S_3$-symmetric tridiagonal algebra $\mathbb{T}(\beta,\gamma,\gamma^*,\varrho,\varrho^*)$.
\end{remark}

\section*{Acknowledgment}

	The author gratefully acknowledges Hajime Tanaka for his insightful comments and valuable suggestions, as well as for drawing \cite{terwilligerS3} and \cite{martinterwilliger} to the attention of the author. JVSM also wishes to thank the Graduate School of Information Sciences at Tohoku University for its warm hospitality during the period in which this paper was written. This work was carried out while the author was a Tosio Kato Fellow, and JVSM expresses sincere gratitude to the Mathematical Society of Japan for the support.

\end{document}